\numberwithin{equation}{section}
\theoremstyle{plain}
\newtheorem{thm}{Theorem}[section]
\newtheorem{lem}[thm]{Lemma}
\newtheorem{prop}[thm]{Proposition}
\newtheorem{cor}[thm]{Corollary}
\theoremstyle{remark}
\newtheorem{defn}[thm]{Definition}
\newtheorem{exmp}[thm]{Example}
\newtheorem{rem}[thm]{Remark}
\newcommand{\Left}{\text{Left}}
\newcommand{\Right}{\text{Right}}
\renewcommand{\O}{\Omega}
\renewcommand{\L}{\Lambda}
\newcommand{\Od}{\O_{\mathsf{diag}}}
\newcommand{\Gl}{G^\cL}
\newcommand{\cL}{\mathcal{L}}
\renewcommand{\phi}{\varphi} 
\newcommand{\im}{\text{Im}}
\newcommand{\pa}{\mathrm{pa}}
\newcommand{\ch}{\mathrm{ch}}
\newcommand{\htr}{\mathrm{htr}}
\newcommand{\rank}{\text{rank}}
\newcommand{\Gf}{G_{\textrm{flow}}}
\newcommand{\bR}{\mathbb{R}}
\newcommand{\algorithmicbreak}{\textbf{break}}
\newcommand{\BREAK}{\STATE \algorithmicbreak}
\begin{document}
\sloppy 

\begin{frontmatter}
\title{Half-Trek Criterion for Identifiability \\of Latent Variable Models}
\runtitle{Identifiability in Latent Variable Models}

\begin{aug}
\author[A]{\fnms{Rina Foygel} \snm{Barber}\ead[label=e1]{rina@uchicago.edu}},
\author[B]{\fnms{Mathias} \snm{Drton}\ead[label=e2,mark]{mathias.drton@tum.de}},
\author[B]{\fnms{Nils} \snm{Sturma}\ead[label=e3,mark]{nils.sturma@tum.de}}
\and
\author[C]{\fnms{Luca} \snm{Weihs}\ead[label=e4]{lucaw@allenai.org}}
\address[A]{Department of Statistics, University of Chicago, \printead{e1}}
\address[B]{Department of Mathematics and Munich Data Science Institute, Technical University of Munich, \printead{e2,e3}}
\address[C]{Allen Institute for AI, \printead{e4}}
\end{aug}

\begin{abstract}
We consider linear structural equation models with latent variables and develop a criterion to certify whether the direct causal effects between the observable variables are identifiable based on the observed covariance matrix.  Linear structural equation models assume that both observed and latent variables solve a linear equation system featuring stochastic noise terms.  Each model corresponds to a directed graph whose edges represent the direct effects that appear as coefficients in the equation system.
Prior research has developed a variety of methods to decide identifiability of direct effects in a latent projection framework, in which the confounding effects of the latent variables are represented by correlation among noise terms.  This approach is effective when the confounding is sparse and effects only small subsets of the observed variables.  
In contrast, the new latent-factor half-trek criterion (LF-HTC) we develop in this paper operates on the original unprojected latent variable model and is able to certify identifiability in settings, where some latent variables may also have dense effects on many or even all of the observables.  Our LF-HTC is an effective sufficient criterion for rational identifiability, under which the direct effects can be uniquely recovered as rational functions of the joint covariance matrix of the observed random variables.  When restricting the search steps in LF-HTC to consider subsets of latent variables of bounded size, the criterion can be verified in time that is polynomial in the size of the graph. 
\end{abstract}

\begin{keyword}[class=MSC]
\kwd{62H22}
\kwd{62J05}
\kwd{62R01}
\end{keyword}

\begin{keyword}
\kwd{Covariance matrix}
\kwd{factor analysis}
\kwd{graphical model}
\kwd{hidden variables}
\kwd{latent variables}
\kwd{parameter identification}
\kwd{structural equation model}
\end{keyword}

\end{frontmatter}

\section{Introduction}

Equipped with an intuitive causal interpretation, structural equation models are very popular tools in a broad range of applied sciences \citep{spirtes2000causation,pearl2009causality,peters2017}.  Often, structural equation models involve latent variables, and it becomes a key problem to clarify whether parameters of interest are identifiable from the joint distribution of the observable variables.  Many different criteria have been developed to decide such identifiability.  The dominant approach in  state-of-the-art methods is to project away latent variables, i.e., their effects are absorbed into correlations among error terms in the structural equations.  In contrast, we here consider models with explicit latent variables and show how the latent dependence structure may be used to certify identifiability even in cases with dense latent confounding, where projection approaches remain inconclusive.

Concretely, we study linear structural equation models with explicit latent variables.  The precise setting of interest may be described as follows. Let $X=(X_v)_{v\in V}$ be a collection of $d=|V|$ observed variables, and let $L=(L_h)_{h\in\mathcal{L}}$ be $\ell=|\mathcal{L}|$ latent (unobserved) variables. Suppose all variables are related by linear equations as
$$
    X_v = \sum_{w \neq v} \lambda_{wv} X_w + \sum_{h\in\mathcal{L}} \gamma_{hv} L_h + \varepsilon_v, \qquad v\in V,
$$
where $\lambda_{wv}$ and $\gamma_{hv}$ are real-valued parameters that are also known as \emph{direct causal effects} of $X_w$ on $X_v$ and $L_h$ on $X_v$, respectively.  The $\varepsilon_v$ are independent mean zero random variables that model noise.  We assume that each $\varepsilon_v$ has finite variance $\omega_v > 0$.  The latent variables $(L_h)_{h\in\mathcal{L}}$ are assumed to be independent, and also independent of the noise terms $\varepsilon = (\varepsilon_v)_{v\in V}$.  Since we are primarily interested in identification of direct causal effects $\lambda_{vw}$, we may fix, without loss of generality, the latent scale such that each $L_h$ has mean zero and variance 1. Viewing $X$, $L$, and $\varepsilon$ as vectors,
the above equation system can be presented in the form
\begin{equation}\label{eq:def-model}
    X = \Lambda^{\top} X + \Gamma^{\top} L + \varepsilon
\end{equation}
with $d \times d$ parameter matrix $\Lambda=(\lambda_{wv})$ and $\ell \times d$ parameter matrix $\Gamma=(\gamma_{hv})$. The matrix $\Lambda$ has zeros along the diagonal.  
Specific models are now derived from \eqref{eq:def-model} by assuming specific sparsity patterns in $\Lambda$ and $\Gamma$. The resulting models assume that all unobserved confounding is caused only by the explicitly modeled, independent latent variables. Thus the latent structure corresponds to factor analysis models,  and we will refer to the latent variables also as \emph{latent factors}. 

The models belong to the general framework of structural equation models with latent variables as they are considered, e.g., in \citet{bollen1989structural}. However, where many of the examples in Bollen's book are concerned with measurement models, i.e., latent variables are measured through observations and these observations are conditionally independent given the latent variables, our interest here is the setting where we have  direct causal effects $\lambda_{wv}$ between observed variables and the latent variables constitute confounders.

The focus of this paper will be on the covariance structure posited by models derived from  \eqref{eq:def-model}.  In particular, we will be interested in determining when sparsity in the matrices $\Lambda$ and $\Gamma$ allows one to \emph{identify} (i.e., uniquely recover) the direct effects $\lambda_{wv}$ from the covariance matrix of the observable random vector $X$.  
Solving \eqref{eq:def-model}, we find
\[
X = (I_d - \Lambda)^{-\top}(\Gamma^{\top} L + \varepsilon).
\]
The vector $\Gamma^{\top} L + \varepsilon$ follows a latent factor model and has  covariance matrix
\begin{equation} \label{eq:omega}
\Omega = \text{Var}[ \varepsilon]+\Gamma^{\top}\text{Var}[ L]\Gamma = \Od + \Gamma^{\top} \Gamma = \Od + \sum_{h \in \cL} \Gamma^{\top}_h \Gamma_h,
\end{equation}
where $\Od$ is diagonal with entries $\O_{\mathsf{diag},vv} = \omega_v$ and $\Gamma_h$ is the $h$-th row of $\Gamma$ such that the entries of $\Gamma_h$ correspond to the causal effects associated to the latent factor $L_h$.  We term the matrix $\Omega$ the \emph{latent covariance matrix}.  
 It follows that  $X$ has 
  covariance matrix 
$$
\Sigma = (I_d - \Lambda)^{-\top}\Omega(I_d-\Lambda)^{-1}.
$$

In order to study structural equation models it is useful to adopt a graphical perspective.  To this end, the zero patterns in $\Lambda$ and $\Gamma$ are associated to a directed graph $G = (V\cup\cL, D)$, where  $D \subset (V\cup\cL)\times (V\cup \cL)$ is a collection of directed edges $w \rightarrow v$. For two observed nodes $v,w \in V$, the effect $\lambda_{wv}$ may be nonzero only if the edge $w \rightarrow v$ is contained in the set $D$. Similarly, for a latent node $h \in \cL$ and an observed node $v \in V$, the effect $\gamma_{hv}$ is possibly nonzero if $h \rightarrow v \in D$. In figures we draw latent nodes $h$ in gray, and we draw edges $h \rightarrow v$ dashed for better distinction. This is illustrated in the next example. 

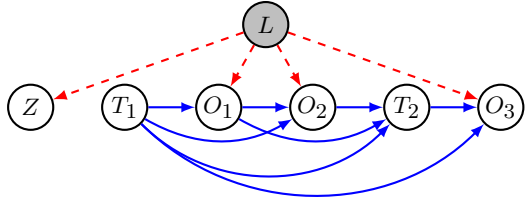
\begin{figure}[t]
    \centering
    \tikzset{
      every node/.style={circle, inner sep=0.3mm, minimum size=0.6cm, draw, thick, black, fill=white, text=black},
      every path/.style={thick}
    }
    \begin{tikzpicture}[align=center]
      \node[fill=lightgray] (l) at (1.875,1.1) {$L$};
      \node[] (t1) at (0,0) {$T_1$};
      \node[] (o1) at (1.25,0) {$O_1$};
      \node[] (o2) at (2.5,0) {$O_2$};
      \node[] (t2) at (3.75,0) {$T_2$};
      \node[] (o3) at (5,0) {$O_3$};
      \node[] (z) at (-1.25,0) {$Z$};
      
      \draw[red, dashed] [-latex] (l) edge (z);
      \draw[red, dashed] [-latex] (l) edge (o1);
      \draw[red, dashed] [-latex] (l) edge (o2);
      \draw[red, dashed] [-latex] (l) edge (o3);
      
      \draw[blue] [-latex] (t1) edge (o1);
      \draw[blue] [-latex] (o1) edge (o2);
      \draw[blue] [-latex] (o2) edge (t2);
      \draw[blue] [-latex] (t2) edge (o3);
      
      \draw[blue] [-latex, bend right] (t1) edge (o2);
      \draw[blue] [-latex, out=-45, in=-135] (t1) edge (t2);
      \draw[blue] [-latex, out=-45, in=-135] (t1) edge (o3);
      \draw[blue] [-latex, bend right] (o1) edge (t2);
    \end{tikzpicture}
    
    \caption{Graph corresponding to a randomized clinical trial for sequential administered treatments with a latent factor $L$.}
    \label{fig:intro-example}
\end{figure}

\begin{exmp}
    We consider an augmented version of an example from \citet[Section 7]{stanghellini2005on}, which pertains to the effects of sequential treatments in randomized clinical trials. Suppose that the patients receive two treatment doses in sequence, $T_1$ and $T_2$, and at both times the treatment dose is assigned at random.  The randomization distribution of the second treatment dose $T_2$ depends on the previous treatment dose $T_1$ and on two intermediate outcome measures $O_1$ and $O_2$. The intermediate outcome measures are deemed potentially related, i.e.,  $O_2$ may causally depend on $O_1$. After the second treatment a final outcome measure $O_3$ is recorded.
    Assume now that there is a latent factor $L$, such as a specific characteristic of a patient, that has effects on all outcomes  $O_1, O_2, O_3$.  Finally, as in \citet{stanghellini2005on}, we assume that there exists an auxiliary observed variable $Z$ that provides a noisy measurement of $L$.  The direct effects in this setup are depicted in the  graph shown in Figure~\ref{fig:intro-example}.  
\end{exmp}

We aim to characterize those models of the form \eqref{eq:def-model} that are rationally identifiable, i.e., all possibly nonzero direct causal effects $\lambda_{wv}$ can be uniquely recovered as rational functions of the entries of the observable covariance matrix $\Sigma$. This kind of identifiability has been examined in previous research in the context of latent projections where latent variables are not explicitly modeled.  Models then correspond to \emph{mixed graphs} that contain only the observed nodes $V$, but bidirectional edges in addition to the directed edges. Each bidirected edge represents a possibly nonzero entry in the latent covariance matrix $\Omega$, i.e., it implicitly indicates the presence of a confounding latent factor. The starting point for deriving sufficient criteria for rational identifiability are then the equations
\begin{equation} \label{eq:old-approach}
    [(I_d - \Lambda)^{\top}\Sigma(I_d-\Lambda)]_{vw} = \Omega_{vw}  = 0,
\end{equation}
which hold whenever no confounding latent factor affects both, $X_v$ and $X_w$ with $v\neq w$.  The equations \eqref{eq:old-approach} are then solved to obtain the nonzero effects in $\Lambda$. This strategy has been leveraged to formulate graphical criteria applicable to mixed graph representations of latent variable models. 

An example of a graphical criterion leveraging the latent projection approach is the half-trek criterion of \cite{foygel2012halftrek}, which can be considered as a predecessor and special case of the new results in this paper. But there are also various other graphical criteria on mixed graphs such as instrumental variables \citep{bowden1984instrumental}, conditional instruments \citep{brito2002generalized}, the $G$-criterion \citep{brito2006graphical}, auxiliary variables (\citeauthor{chen2016incorporating}, \citeyear{chen2016incorporating} and \citeauthor{chen2017identification}, \citeyear{chen2017identification}), decomposition techniques \citep{tian2005identifying} and several generalizations and further developments, cf. \citet{tian2009parameter}, \citet{drton2016generic}, \citet{weihs2017determinantal}, \citet{kumor2019efficient} and \citet{kumor2020efficient}.

In contrast, in this work we consider the original, unprojected latent variable model as defined in \eqref{eq:def-model}, and we allow the latent covariance matrix $\Omega$ to be dense with only few or no zero entries. Then the usual approach of exploiting the zero structure in $\Omega$ that was highlighted in \eqref{eq:old-approach} is no longer effective.  However, dense confounding of the observed variables may be caused by only a small number of latent factors, in which case the latent covariance matrix $\Omega$ exhibits exploitable structure. Our key observation is that $\Omega$ may contain rank-deficient submatrices.  For example, let $Y, Z \subseteq V$ be two disjoint sets  of observed nodes. Then by \eqref{eq:omega} the submatrix $\Omega_{Y,Z}$ equals
$$
\Omega_{Y,Z} =  (\Od)_{Y,Z} + \sum_{h \in \cL} (\Gamma^{\top}_h \Gamma_h)_{Y,Z} = \sum_{h \in H} (\Gamma^{\top}_h \Gamma_h)_{Y,Z},
$$
where the subset $H \subseteq \cL$ over which we sum on the right-hand side contains exactly those latent factors that have an effect on a node in $Y$ and at the same time also an effect on a node in $Z$.
Since the matrix $\Gamma^{\top}_h \Gamma_h$ has rank one for each latent node $h$, the submatrix $\Omega_{Y,Z}$ is not of full column rank if $|H| < |Z|$. Exploiting this low rank structure of the latent covariance matrix $\Omega$ yields our main result, which is a sufficient criterion  for rational identifiability of the direct causal effects $\lambda_{wv}$.  We show how to convert the criterion into a graphical condition that can be checked using efficient algorithms under a bound on the considered rank.  The graphical criterion is directly applicable to directed graphs $G = (V\cup\cL, D)$ that explicitly contain the latent nodes $\cL$, i.e., the criterion operates on the unprojected latent variable model and allows to explore specific confounding.  We refer to it as the \emph{latent-factor half-trek criterion} (LF-HTC).

\begin{exmp}
   We take up the earlier example of a randomized clinical trial with sequential treatments, which we summarized in the graph in Figure~\ref{fig:intro-example}. 
   It is natural to investigate the direct causal effects between the observed variables $T_1, O_1, O_2, T_2$ and $O_3$. These direct causal effects correspond to the blue (non-dashed) edges in the figure.  Our new latent-factor half-trek criterion will be able to certify that the whole parameter matrix $\Lambda$ is rationally identifiable and all nonzero effects $\lambda_{vw}$ can be written as rational formulas in the entries of the observable covariance matrix $\Sigma$. For example, the direct effect from the first treatment dose $T_1$ on the intermediate outcome $O_1$ is given by $\Sigma_{T_1, O_1} / \Sigma_{T_1, T_1}$; a standard regression coefficient. But remarkably, we can even identify effects corresponding to the edges $T_1 \rightarrow O_2$ and $O_1 \rightarrow O_2$ by the latent-factor half-trek criterion. We verified that it is impossible to identify the latter two effects in the latent projection framework (cf. Section \ref{sec:latent-proj}).
   
   
\end{exmp}

While most of the  general identification criteria have been developed in the setting of latent projections, some  existing work also considers unprojected latent factor models as defined in \eqref{eq:def-model}. 
However, this work addresses special types of latent confounding only. 
For example,  \citet{stanghellini2005on} and \citet{leung2016identifiability} examine linear latent variable models with one latent variable, and the conditional instrument approach in \citet{vanderzander2015efficiently} covers scenarios in which no confounding factor has an effect on all observed variables. Another approach requires that latent factors are measured through observed proxy variables and relies on identifying the causal effect between the latent factor and the proxy, see for example \citet{kuroki2014measurement}, \citet{wang2018identifying} and \citet{lee2021causal}, the latter of which deals with the discrete case. 


It should be noted that, in principle, rational identifiability is always decidable by computational algebraic geometry \citep{garcia2010identifying} involving Gr\"obner basis computations \citep{cox2007ideals}. However, in the worst case, the complexity of these methods can be double exponential in the size of the graph. Thus, they may be infeasible even for relatively small graphs, and more efficient graphical criteria are of great value.  To check the new latent-factor half-trek criterion we propose an algorithm based on max-flow computations \citep{cormen2009introduction} that runs in polynomial time in the size of the graph if we confine ourselves to search only over subsets of latent factors of bounded size. We show that the restriction of the search space is necessary since the task of checking the latent-factor half-trek criterion without restrictions is in general NP-complete.  

The organization of the paper is as follows. In Section \ref{sec:graph-rep-and-ident} we provide a precise definition of linear structural equation models given by directed graphs and rigorously introduce the concept of rational identifiability. Moreover, we derive basic necessary conditions for rational identifiability based on dimension arguments. In Section \ref{sec:main-result} we present our main result, the LF-HTC. In Section \ref{sec:latent-proj} we discuss the latent projection framework considered in previous research and compare the new LF-HTC to existing criteria. In particular, we compare the LF-HTC to the original half-trek criterion. In Section \ref{sec:computation} we present an algorithm to check the LF-HTC efficiently. Using this algorithm we systematically check identifiability of certain classes of small latent-factor graphs in Section \ref{sec:experiments}.  The restriction to small graphs allows for these checks to be validated using suitably designed Gr\"obner basis computations.   Finally, the proof of the main result is given in Section \ref{sec:proof-lfhtc}.  Further elements of proofs, a hardness result for checking the LF-HTC without a bound on the cardinality of searched sets of latent variables and an explanation on how to effectively deploy techniques from computational algebraic geometry are deferred to the Supplementary Material \citep{supplemental}.

\section{Graphical Representation and Identifiability} \label{sec:graph-rep-and-ident}
Let $G = (V\cup\cL, D)$ be a directed graph where  $V$ and $\cL$ are finite disjoint sets of observed and latent nodes, respectively.  We emphasize that $G$ is allowed to contain directed cycles.  Let $d=|V|$ and $\ell=|\cL|$.  The edge set $D \subset (V\cup\cL)\times (V\cup \cL)$ 
is assumed to be free of self-loops, so $v \rightarrow v \notin D$ for all $v \in V\cup\cL$. For each vertex $v \in V\cup\cL$, define its set of parents as $\pa(v) = \{w \in V\cup\cL: w \rightarrow v \in D\}$. Throughout the paper we require $\pa(h)=\emptyset$ for all $h\in \cL$, so that all latent nodes are source nodes and the outgoing edges of latent nodes only point to observed nodes. If this condition is satisfied, we call $G$ a \emph{latent-factor graph} and, to emphasize the set of latent variables, write $\Gl$ instead of $G$.

The edge set of a latent-factor graph may be partitioned as $D = D_V \cup D_{\cL V}$, where $D_V = D \cap (V \times V)$ is the set of directed edges between observed nodes and $D_{\cL V} = D \cap (\cL \times V)$ is the set of directed edges that point from latent to observed nodes. Let $\mathbb{R}^{D_V}$ be the set of real $d \times d$ matrices $\Lambda = (\lambda_{wv})$ with support $D_V$, that is, $\lambda_{wv} = 0$ if $w \rightarrow v \notin D_V$. Write $\mathbb{R}^{D_V}_{\mathrm{reg}}$ for the subset of matrices $\Lambda \in \mathbb{R}^{D_V}$ with $I_d - \Lambda$  invertible;  recall that we allow $G^\cL$ to contain directed cycles. Similarly, let $\mathbb{R}^{D_{\cL V}}$ be the set of real $\ell \times d$ matrices $\Gamma = (\gamma_{hv})$ with support $D_{\cL V}$, that is, $\gamma_{hv} = 0$ if $h \rightarrow v \notin D_{\cL V}$. Additionally, we write $\mathrm{diag}^{+}_d$ for the set of all $d \times d$ diagonal matrices with a positive diagonal indexed by the elements of $V$.

Each latent-factor graph postulates a covariance model that corresponds to a linear structural equation model specified via \eqref{eq:def-model}.

\begin{defn} \label{def:model-def}
 The covariance model given by a latent-factor graph $\Gl = (V\cup\cL, D)$ with $|V|=d$ and $|\cL|= \ell$ is the family of covariance matrices
  \begin{equation} \label{eq:param-sigma}
      \Sigma = (I_d - \Lambda)^{-\top}\Omega(I_d-\Lambda)^{-1}
  \end{equation}
  obtained from choices of $\Lambda \in \mathbb{R}^{D_V}_{\mathrm{reg}}$ and $\Omega$ in the image of the map
  \begin{align*}
    \tau : \mathbb{R}^{D_{\cL V}} \times  \mathrm{diag}^{+}_d&\longrightarrow \textit{PD}(d) \\
    (\Gamma, \Od) &\longmapsto \Od + \Gamma^{\top} \Gamma,
\end{align*}
where $\textit{PD}(d)$ is the cone of positive definite symmetric $d \times d$ matrices.  We term the image $\im(\tau) \subseteq \textit{PD}(d)$ the \emph{cone of latent covariance matrices}.
\end{defn}

We are interested in the question of identifiability, i.e., whether the matrix $\Lambda$ can be uniquely recovered from a given covariance matrix $\Sigma$ of the form \eqref{eq:param-sigma}. If it is possible to recover the whole matrix $\Lambda$ uniquely, we can determine $\Omega$ uniquely by the equation
\begin{equation}
\label{eq:Omega-start}
    (I_d - \Lambda)^{\top}\Sigma(I_d-\Lambda) = \Omega,
\end{equation}
since the matrix $I_d - \Lambda$ is assumed to be invertible. Thus, for $\Theta = \mathbb{R}^{D_V}_{\mathrm{reg}} \times \im(\tau)$, identifiability holds if the parametrization map
\begin{align} \label{eq:param-map}
\begin{split}
    \phi_{\Gl} : \Theta &\longrightarrow \textit{PD}(d) \\
    (\Lambda, \Omega) &\longmapsto (I_d - \Lambda)^{-\top}\Omega(I_d-\Lambda)^{-1} 
\end{split}
\end{align}
is injective on $\Theta$, or a suitably large subset. Since identifiability will usually not hold on the whole set $\Theta$, we need to clarify what we mean by a ``suitably large'' subset. We use terminology from algebraic geometry, background can be found in \citet{cox2007ideals}, \citet{shafarevich2013basic} or \citet{hartshorne1977algebraic}.

A property on an irreducible algebraic set $W$ is said to be generically true if the property holds on the complement $W \setminus A$ of a proper algebraic subset $A \subseteq W$.  Due to irreducibility, the complement $W \setminus A$ is dense in $W$ with respect to the Zariski topology and therefore considered as a ``suitably large'' subset. When $W$ is an irreducible algebraic set defined over the real numbers, a proper algebraic subset of $W$ has Lebesgue measure zero, see e.g.~the lemma in \citet{okamoto1973distinctness}.


To connect this terminology to our setup, we observe that the Zariski closure $\overline{\Theta}$, i.e., the smallest algebraic subset that contains the domain $\Theta$, is irreducible. This is true because $\Theta$ is the polynomial image of an open set.  Hence, we say that a property on $\Theta$ is generically true if there exists a proper algebraic subset $A \subset \overline{\Theta}$ such that the property holds on the complement $\Theta \setminus A$. Our interest is now in generically identifying the direct causal effects $\lambda_{wv}$. Since the parametrization $\phi_{G^{\mathcal{L}}}$ is rational, the identification formula, in the worst case, is an algebraic function \citep{garcia2010identifying}. However, in all examples we know, if generic identifiability is possible, then by rational formulas. This motivates the following definition.

\begin{defn}[Rational identifiability] \label{def:rat-ident}
\mbox{ }
\begin{itemize}
    \item[(a)] The latent-factor graph $\Gl$ is said to be rationally identifiable if there exists a proper algebraic subset $A \subset \overline{\Theta}$ and a rational map $\psi : \textit{PD}(d) \longrightarrow \mathbb{R}^{D_V}_{\mathrm{reg}} \times \textit{PD}(d)$ such that $\psi \circ \phi_{\Gl}(\Lambda, \Omega) = (\Lambda, \Omega)$ for all $(\Lambda, \Omega) \in \Theta \setminus A$.
    \item[(b)] 
    The direct causal effect $\lambda_{vw}$, or also simply the edge $v \rightarrow w \in D_V$, is rationally identifiable if there exists a proper algebraic subset $A \subset \overline{\Theta}$ and a rational map $\psi : \textit{PD}(d) \longrightarrow \mathbb{R}$ such that $\psi \circ \phi_{\Gl}(\Lambda, \Omega) = \lambda_{vw}$ for all $(\Lambda, \Omega) \in \Theta \setminus A$.
\end{itemize}
\end{defn}

Rational identifiability of $\Gl$ is equivalent to rational identifiability of all edges in $D_V$; recall~\eqref{eq:Omega-start}.
If $\Gl$ is rationally identifiable, then a (absolutely continuous) random choice of the effects in $(\Lambda,\Gamma)$ and the error variances in $\Od$ will almost surely yield a covariance matrix for the observable vector $X$ from which $\Lambda$ can be recovered uniquely by rational formulas. If $\Gl$ is not generically identifiable, its parametrization $\phi_{\Gl}$ may be either generically finite-to-one or generically infinite-to-one:

\begin{defn}
    Let $f: S \rightarrow \mathbb{R}^n$ be a map defined on a subset $S \subseteq \mathbb{R}^m$ such that the Zariski closure $\overline{S}$ is irreducible. Then $f$ is generically finite-to-one if there exists a proper algebraic subset $A \subseteq \overline{S}$ such that the fiber $\mathcal{F}_{f}(s) = f^{-1}(f(s))$ is finite for all $s \in S \setminus A$. Otherwise, $f$ is said to be generically infinite-to-one.
\end{defn}

\begin{defn} \label{def:finite-to-one}
  A latent-factor graph $\Gl$ is generically finite-to-one if its parametrization $\phi_{\Gl}$ is generically finite-to-one. In this case we will also say that $\Gl$ is finitely identifiable. Otherwise,  $\Gl$ is said to be generically infinite-to-one.
\end{defn}

Note that if a latent-factor graph $\Gl$ is rationally identifiable, then the fiber $\mathcal{F}_{\phi_{\Gl}}(\Lambda, \Omega) = \{(\Lambda, \Omega)\}$ for all parameter choices outside of a proper algebraic subset. In particular, a graph that is rationally identifiable is generically finite-to-one. The following Lemma is an important tool to check if a rational map is generically finite-to-one. For completeness, we provide a proof in Appendix A in the supplement \citep{supplemental}.  Here, we rely on the notion of semialgebraic sets, which are finite unions of sets defined by
finitely many  polynomial equations and inequalities. For background on semialgebraic sets we refer to \citet{bochnak1998real}, \citet{basu2006algorithms} and \citet{benedetti1990real}.

\begin{lem} \label{lem:finite-to-one}
    Let $S \subseteq \mathbb{R}^m$ be a semialgebraic set such that the Zariski closure $\overline{S}$ is irreducible. Then a rational mapping $f:S \rightarrow \mathbb{R}^n$ is generically finite-to-one if and only if $\dim(f(S)) = \dim(S)$.  In particular, if $\dim(S)>n$ then $f$ must be generically infinite-to-one.
\end{lem}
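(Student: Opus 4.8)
The plan is to reduce the statement to a fiber–dimension count for an associated dominant map of irreducible varieties and to extract the equivalence from the generic constancy of the fiber dimension. First I would pass to Zariski closures, writing $W = \overline{S}$ and $Y = \overline{f(S)}$. Two standard facts from semialgebraic geometry enter here: the image $f(S)$ of a semialgebraic set under a rational (hence semialgebraic) map is again semialgebraic by Tarski--Seidenberg, and the dimension of a semialgebraic set equals the dimension of its Zariski closure \citep{bochnak1998real}. Consequently $\dim S = \dim W$ and $\dim f(S) = \dim Y$, so it suffices to compare $\dim W$ and $\dim Y$. Since $f$ is rational on the irreducible set $W$, its image closure $Y$ is again irreducible, and the induced rational map $\overline{f} : W \dashrightarrow Y$ is dominant.

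The heart of the argument is to establish the identity
\[
\dim S \;=\; \dim f(S) + c,
\]
where $c \ge 0$ is the dimension of the generic fiber of $\overline{f}$. To produce $c$ I would invoke Hardt's semialgebraic trivialization theorem (equivalently, the constancy of the fiber dimension over a suitable stratum): there is a semialgebraic subset $P \subseteq f(S)$ with $\dim\big(f(S) \setminus P\big) < \dim f(S)$ over which $f$ is semialgebraically trivial, so that every fiber $f^{-1}(t)$ with $t \in P$ has the same dimension $c$ and $\dim f^{-1}(P) = \dim P + c = \dim f(S) + c$. It then remains to prove that this preimage carries the full dimension of $S$, i.e.\ that $S^{\circ} := f^{-1}(P)$ is Zariski dense in $W$. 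This is where irreducibility is essential: writing $W = \overline{S^{\circ}} \cup \overline{S \setminus S^{\circ}}$ and using that $W$ is irreducible, one of the two closures equals $W$; but $S \setminus S^{\circ} = f^{-1}\big(f(S)\setminus P\big)$ has image of dimension $< \dim Y$, so its closure cannot be dominant over the irreducible $Y$, which rules out $\overline{S\setminus S^{\circ}} = W$. Hence $\overline{S^{\circ}} = W$, giving $\dim S^{\circ} = \dim S$ and the displayed identity.

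With this identity both directions are immediate. Since a semialgebraic set is finite exactly when it is $0$-dimensional, the generic fibers over $P$ are finite precisely when $c = 0$, which by the identity is equivalent to $\dim f(S) = \dim S$. For the bookkeeping of the exceptional sets required by the definitions, I would note that when $c = 0$ the set $\overline{S \setminus S^{\circ}}$ is a proper algebraic subset of $W$ (again by the irreducibility dichotomy) on whose complement all fibers are finite, so $f$ is generically finite-to-one; and when $c \ge 1$ the set $S^{\circ}$ is Zariski dense, so every proper algebraic subset omits some point of $S^{\circ}$ whose fiber has dimension $c \ge 1$ and is therefore infinite, so $f$ is generically infinite-to-one. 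The final assertion follows because $f(S) \subseteq \mathbb{R}^n$ forces $\dim f(S) \le n$, whence $\dim S > n$ precludes the equality $\dim f(S) = \dim S$.

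I expect the main obstacle to be the dimension identity $\dim S = \dim f(S) + c$, and specifically the claim that the full dimension of $S$ is concentrated over the top-dimensional part of the image. A naive count via the semialgebraic formula $\dim S = \max_k\big(k + \dim\{t : \dim f^{-1}(t) = k\}\big)$ does not suffice, since $S$ could a priori decompose into several full-dimensional pieces whose fibers have different dimensions; it is precisely the irreducibility of $\overline{S}$ that forces the maximizing stratum to be the generic one, and making this step rigorous is the crux of the proof.
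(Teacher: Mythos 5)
Your overall framework (Hardt trivialization plus the fiber--dimension formula, with irreducibility of $\overline{S}$ controlling the exceptional sets) matches the paper's proof, but the pivotal claim on which you build the argument is false: there need not exist a semialgebraic $P \subseteq f(S)$ with $\dim(f(S)\setminus P) < \dim f(S)$ over which the fiber dimension is a single constant $c$, even when $\overline{S}$ is irreducible. Concretely, take $S = \{(x,y) \in \mathbb{R}^2 : x > 0\} \cup \{(x,0) : x \le 0\}$, so that $\overline{S} = \mathbb{R}^2$ is irreducible and $\dim S = 2$, and let $f(x,y) = x$. Then $f(S) = \mathbb{R}$, the fibers over $(0,\infty)$ are vertical lines (dimension $1$), while the fibers over $(-\infty,0]$ are single points (dimension $0$). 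Both regions of the image are one-dimensional, so any $P$ with lower-dimensional complement meets both, and no generic fiber dimension $c$ exists; the identity $\dim S = \dim f(S) + c$ is not well formed. Your proposed rescue --- that ``irreducibility of $\overline{S}$ forces the maximizing stratum to be the generic one'' --- cannot work, because irreducibility of the Zariski closure imposes no purity on the semialgebraic set itself: $S$ may consist of pieces of different local dimension over which the real fibers behave differently. Note also that your density argument for $S^{\circ} = f^{-1}(P)$ explicitly requires $\dim(f(S)\setminus P) < \dim f(S)$, which fails in the example for the only viable choice $P = (0,\infty)$, even though $f^{-1}(P)$ happens to be Zariski dense there.

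The gap is confined to the infinite-to-one direction; when $\dim f(S) = \dim S$ your argument is sound, since constancy is then automatic: for any top-dimensional Hardt stratum $T_i$ one has $\dim f^{-1}(t) = \dim f^{-1}(T_i) - \dim T_i \le \dim S - \dim f(S) = 0$ for $t \in T_i$. The paper's proof shows how to repair the other direction: instead of selecting image strata with small complement in $f(S)$, let $C$ be the union of those preimages $f^{-1}(T_i)$ with $\dim f^{-1}(T_i) < \dim S$. Then $\dim C < \dim S$ holds by construction, so irreducibility yields $\overline{S \setminus C} = \overline{S}$ directly, with no appeal to dominance of the complement's image; and over $S \setminus C$ the fiber--dimension formula gives only the one-sided bound $\dim f^{-1}(f(x)) \ge \dim S - \dim f(S) > 0$, which is all that is needed --- no constant $c$ ever enters. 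In the counterexample above, this $C$ is the closed half-line, $S \setminus C$ is the open half-plane, and the argument closes. In short: stratify by the dimension of the \emph{preimages}, not of the image pieces, and replace your dimension identity by the two one-sided bounds.
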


\begin{rem}
If the rational mapping in Lemma 2.5 is infinite-to-one, then it holds that the fiber is infinite for almost all $s \in S$.  This can be seen, in particular, by inspecting the proof of Lemma 2.5. 
\end{rem}

In our context, the rational mapping of interest is the parametrization map $\phi_{\Gl}$, which maps into the positive definite cone $\textit{PD}(d)$.
We observe that a latent-factor graph $\Gl$ cannot be finite-to-one if the dimension of the domain $\Theta = \mathbb{R}^{D_V}_{\mathrm{reg}} \times \im(\tau)$ is larger than the dimension of $\textit{PD}(d)$. This gives a basic necessary condition.
\begin{figure}[t]
    \centering
    \tikzset{
      every node/.style={circle, inner sep=0.3mm, minimum size=0.45cm, draw, thick, black, fill=white, text=black},
      every path/.style={thick}
    }
    \begin{tikzpicture}[align=center]
      \node[fill=lightgray] (h1) at (0,1) {$h_{1}$};
      \node[] (1) at (-2,0) {1};
      \node[] (2) at (-1,0) {2};
      \node[] (3) at (-0,0) {3};
      \node[] (4) at (1,0) {4};
      \node[] (5) at (2,0) {5};
      
      \draw[red, dashed] [-latex] (h1) edge (1);
      \draw[red, dashed] [-latex] (h1) edge (2);
      \draw[red, dashed] [-latex] (h1) edge (3);
      \draw[red, dashed] [-latex] (h1) edge (4);
      \draw[red, dashed] [-latex] (h1) edge (5);
      
      \draw[blue] [-latex] (1) edge (2);
      \draw[blue] [-latex] (2) edge (3);
      \draw[blue] [-latex] (3) edge (4);
      \draw[blue] [-latex] (4) edge (5);
      \draw[blue] [-latex, bend right] (1) edge (3);
      \draw[blue] [-latex, bend right] (3) edge (5);
      
    \end{tikzpicture}
    \caption{Latent-factor graph that is (trivially) generically infinite-to-one.}
    \label{fig:inf-to-one}
\end{figure}
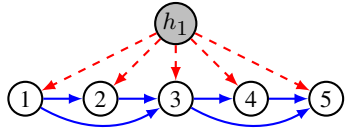

\begin{cor} \label{cor:infinite-to-one}
    A latent-factor graph $\Gl$ is generically infinite-to-one if $|D_V|+\dim(\mathrm{Im}(\tau)) > \binom{d+1}{2}$.
\end{cor}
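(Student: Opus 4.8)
The plan is to apply Lemma~\ref{lem:finite-to-one} to the parametrization map $\phi_{\Gl} : \Theta \to \textit{PD}(d)$. The cone $\textit{PD}(d)$ is an open subset of the $\binom{d+1}{2}$-dimensional vector space of symmetric $d \times d$ matrices; identifying a symmetric matrix with the tuple of its entries on and above the diagonal, I regard $\phi_{\Gl}$ as a map into $\mathbb{R}^n$ with $n = \binom{d+1}{2}$. The strategy is then to show that $\dim(\Theta) = |D_V| + \dim(\im(\tau))$, so that the hypothesis $|D_V| + \dim(\im(\tau)) > \binom{d+1}{2}$ becomes $\dim(\Theta) > n$, whereupon the ``in particular'' clause of Lemma~\ref{lem:finite-to-one} yields that $\phi_{\Gl}$ is generically infinite-to-one. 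By Definition~\ref{def:finite-to-one} this is exactly the assertion that $\Gl$ is generically infinite-to-one.

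To invoke the Lemma I first check its hypotheses on $S = \Theta$ and $f = \phi_{\Gl}$. The map $\phi_{\Gl}$ is rational: by Cramer's rule the entries of $(I_d - \Lambda)^{-1}$ are polynomials in the entries of $\Lambda$ divided by $\det(I_d - \Lambda)$, so each entry of $(I_d-\Lambda)^{-\top}\Omega(I_d-\Lambda)^{-1}$ is a rational function of the coordinates of $(\Lambda, \Omega)$ whose denominator is a power of $\det(I_d - \Lambda)$ and hence nonvanishing on $\Theta$. The domain $\Theta = \mathbb{R}^{D_V}_{\mathrm{reg}} \times \im(\tau)$ is semialgebraic: the factor $\mathbb{R}^{D_V}_{\mathrm{reg}}$ is cut out of $\mathbb{R}^{D_V}$ by the inequality $\det(I_d-\Lambda)\neq 0$, and $\im(\tau)$ is semialgebraic as the image under the polynomial map $\tau$ of a semialgebraic set (Tarski--Seidenberg). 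Finally, $\overline{\Theta}$ is irreducible, as already recorded in the discussion preceding Definition~\ref{def:rat-ident}.

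It remains to compute $\dim(\Theta)$, which is the only place where a small amount of work enters. The factor $\mathbb{R}^{D_V}_{\mathrm{reg}}$ is a nonempty Zariski-open subset of $\mathbb{R}^{D_V} \cong \mathbb{R}^{|D_V|}$---nonempty since $\Lambda = 0$ lies in it, as $\det(I_d)=1\neq 0$---so its complement $\{\det(I_d - \Lambda)=0\}$ is a proper algebraic subset and $\dim(\mathbb{R}^{D_V}_{\mathrm{reg}}) = |D_V|$. Since the Zariski closure of a product is the product of the Zariski closures and dimension is additive over products of irreducible sets, I obtain $\dim(\Theta) = \dim(\overline{\mathbb{R}^{D_V}_{\mathrm{reg}}}) + \dim(\overline{\im(\tau)}) = |D_V| + \dim(\im(\tau))$. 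The hypothesis of the corollary then reads $\dim(\Theta) > \binom{d+1}{2} = n$, and Lemma~\ref{lem:finite-to-one} gives that $\phi_{\Gl}$ is generically infinite-to-one, completing the argument. The only genuinely nontrivial ingredient is Lemma~\ref{lem:finite-to-one} itself; granting it, the corollary is a direct dimension count, and the sole point requiring care is confirming that the product dimension splits as stated.
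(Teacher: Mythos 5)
Your proposal is correct and follows essentially the same route as the paper's proof: verify the hypotheses of Lemma~\ref{lem:finite-to-one} (rationality of $\phi_{\Gl}$ via Cramer's rule, semialgebraicity of $\Theta$, irreducibility of $\overline{\Theta}$), establish $\dim(\Theta) = |D_V| + \dim(\im(\tau))$ by additivity of dimension over products, and conclude by the dimension comparison. The only cosmetic difference is that you invoke the ``in particular'' clause of the lemma after identifying $\textit{PD}(d)$ with an open subset of $\mathbb{R}^{\binom{d+1}{2}}$, whereas the paper bounds $\dim(\phi_{\Gl}(\Theta)) \leq \dim(\textit{PD}(d))$ and applies the lemma's main equivalence---the same dimension count in substance.
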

\begin{proof}
    To apply Lemma \ref{lem:finite-to-one} we have to show that $\Theta = \mathbb{R}^{D_V}_{\mathrm{reg}} \times \im(\tau)$ is semialgebraic, its closure is irreducible and that the parametrization map $\phi_{\Gl}$ is rational. The first two claims are true since $\Theta$ is the polynomial image of an open semialgebraic set. Moreover, the map $\phi_{\Gl}$ is rational due to Cramer's rule. 
    
    Now, we study the dimensions of $\Theta$ and the image $\phi_{\Gl}(\Theta)$. The dimension of $\Theta$ is equal to $|D_V| + \dim(\im(\tau))$ since the dimension of the product of two semialgebraic sets is the sum of their individual dimensions \citep[Prop. 2.8.5]{bochnak1998real}. Since the image of $\phi_{\Gl}$ lies in the positive definite cone $\textit{PD}(d)$, we have 
    $$
        \dim(\phi_{\Gl}(\Theta)) \leq \dim(\textit{PD}(d)) = \binom{d+1}{2}.
    $$
    Thus, if $|D_V|+\dim(\mathrm{Im}(\tau)) > \binom{d+1}{2}$, then $\dim(\Theta) > \dim(\phi_{\Gl}(\Theta))$ and by Lemma \ref{lem:finite-to-one} we conclude that $\phi_{\Gl}$ is generically infinite-to-one.
\end{proof}

\begin{exmp}
Consider the graph in Figure \ref{fig:inf-to-one} where the latent structure is that of a one-factor model. By Theorem 2 in \citet{drton2007algebraic} we have $\dim(\im(\tau)) = 10$; with only one factor the dimension is equal to the number of edges from the latent node to the observed nodes, $|D_{LV}|=5$, plus the $5$ parameters appearing on the diagonal of the matrix $\Od$. But since the number of observed edges $|D_V|=6$ we have that $16 = |D_V|+\dim(\im(\tau)) > \binom{6}{2}=15$ and therefore the graph is generically infinite-to-one by Corollary \ref{cor:infinite-to-one}.
\end{exmp}

If a latent-factor graph is not trivially infinite-to-one by dimension comparison, then it becomes more difficult to decide whether it is generically infinite-to-one, generically finite-to-one or rationally identifiable. Figure \ref{fig:examples} shows latent-factor graphs that only have subtle differences in their structures but each of them has a different status of identifiability.

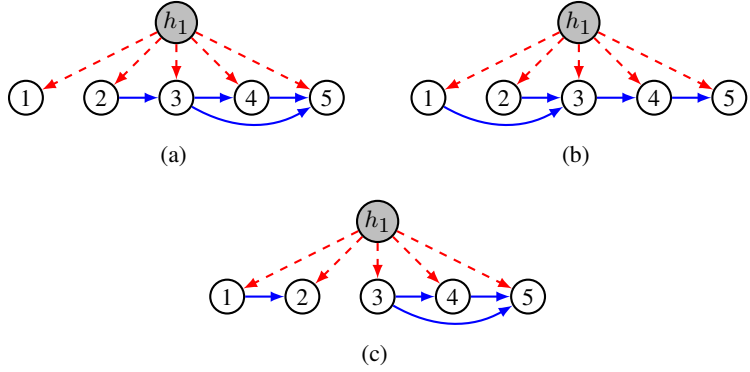
\begin{figure}[t]
    \subfloat[]{
    \centering
    \tikzset{
      every node/.style={circle, inner sep=0.3mm, minimum size=0.45cm, draw, thick, black, fill=white, text=black},
      every path/.style={thick}
    }
    \begin{tikzpicture}[align=center]
      \node[fill=lightgray] (h1) at (0,1) {$h_{1}$};
      \node[] (1) at (-2,0) {1};
      \node[] (2) at (-1,0) {2};
      \node[] (3) at (-0,0) {3};
      \node[] (4) at (1,0) {4};
      \node[] (5) at (2,0) {5};
      
      \draw[red, dashed] [-latex] (h1) edge (1);
      \draw[red, dashed] [-latex] (h1) edge (2);
      \draw[red, dashed] [-latex] (h1) edge (3);
      \draw[red, dashed] [-latex] (h1) edge (4);
      \draw[red, dashed] [-latex] (h1) edge (5);
      
      \draw[blue] [-latex] (2) edge (3);
      \draw[blue] [-latex] (3) edge (4);
      \draw[blue] [-latex] (4) edge (5);
      \draw[blue] [-latex, bend right] (3) edge (5);
    \end{tikzpicture}
    } \qquad
    \subfloat[]{
    \centering
    \tikzset{
      every node/.style={circle, inner sep=0.3mm, minimum size=0.45cm, draw, thick, black, fill=white, text=black},
      every path/.style={thick}
    }
    \begin{tikzpicture}[align=center]
      \node[fill=lightgray] (h1) at (0,1) {$h_{1}$};
      \node[] (1) at (-2,0) {1};
      \node[] (2) at (-1,0) {2};
      \node[] (3) at (-0,0) {3};
      \node[] (4) at (1,0) {4};
      \node[] (5) at (2,0) {5};
      
      \draw[red, dashed] [-latex] (h1) edge (1);
      \draw[red, dashed] [-latex] (h1) edge (2);
      \draw[red, dashed] [-latex] (h1) edge (3);
      \draw[red, dashed] [-latex] (h1) edge (4);
      \draw[red, dashed] [-latex] (h1) edge (5);
      
      \draw[blue] [-latex, bend right] (1) edge (3);
      \draw[blue] [-latex] (2) edge (3);
      \draw[blue] [-latex] (3) edge (4);
      \draw[blue] [-latex] (4) edge (5);
      
    \end{tikzpicture}
    }

    \subfloat[]{
    \centering
    \tikzset{
      every node/.style={circle, inner sep=0.3mm, minimum size=0.45cm, draw, thick, black, fill=white, text=black},
      every path/.style={thick}
    }
    \begin{tikzpicture}[align=center]
      \node[fill=lightgray] (h1) at (0,1) {$h_{1}$};
      \node[] (1) at (-2,0) {1};
      \node[] (2) at (-1,0) {2};
      \node[] (3) at (-0,0) {3};
      \node[] (4) at (1,0) {4};
      \node[] (5) at (2,0) {5};
      
      \draw[red, dashed] [-latex] (h1) edge (1);
      \draw[red, dashed] [-latex] (h1) edge (2);
      \draw[red, dashed] [-latex] (h1) edge (3);
      \draw[red, dashed] [-latex] (h1) edge (4);
      \draw[red, dashed] [-latex] (h1) edge (5);
      
      \draw[blue] [-latex] (1) edge (2);
      \draw[blue] [-latex] (3) edge (4);
      \draw[blue] [-latex] (4) edge (5);
      \draw[blue] [-latex, bend right] (3) edge (5);
      
    \end{tikzpicture}
    }
    
    \caption{Latent-factor graphs with one latent-factor. (a) Rationally identifiable.  (b) Generically finite-to-one but not rationally identifiable. (c) Generically infinite-to-one.}
    \label{fig:examples}
\end{figure}

\section{Main Identifiability Result} \label{sec:main-result}
The main idea underlying our sufficient condition for rational identifiability is to exploit the low rank structure of the latent covariance matrix
$$
    \Omega = \Od + \sum_{h \in \cL} \Gamma_h^{\top} \Gamma_h.
$$
Recall that $\Od \in \mathrm{diag}^{+}_d$ is diagonal and  $\Gamma_h$ is the $h$-th row of $\Gamma \in \mathbb{R}^{D_{\cL V}}$. For a node $v \in V$, denote by $\pa_V(v) = \{w \in V : w \rightarrow v \in D_V\}$ the set of \emph{observed parents} and by $\pa_{\cL}(v) = \{w \in \cL : w \rightarrow v \in D_{\cL V}\}$ the set of \emph{latent parents}.  So, $\pa(v) = \pa_V(v) \cup \pa_{\cL}(v)$. Focusing on a fixed node $v \in V$, it is our goal to find linear equations that determine the direct causal effects corresponding to the observed parents, that is, we aim to determine the vector  $\Lambda_{\pa_V(v), v}$. Our approach is to find suitable sets of observed nodes $Y,Z \subseteq V\setminus\{v\}$ and a set of latent nodes $H \subseteq \cL$  with $|H|=|Z|$ such that the latent covariance matrix contains a submatrix that satisfies
\begin{equation} \label{eq:low-rank matrix}
    \Omega_{Y, Z \cup \{v\}} = \sum_{h \in H} (\Gamma_h^{\top} \Gamma_h)_{Y, Z \cup \{v\}}
\end{equation}
and fails to have full column rank.  The drop in rank means that the entries of the submatrix exhibit algebraic relations, which we may then use to identify the targeted direct causal effects.

The equality in \eqref{eq:low-rank matrix} holds if (i) $Y \cap (Z \cup \{v\})=\emptyset$ and (ii) $\pa_{\cL}(Y) \cap \pa_{\cL}(Z \cup \{v\}) \subseteq H$.  Indeed, (i) ensures that $(\Od)_{Y, Z \cup \{v\}} = 0$ because the considered submatrix does not involve any diagonal elements.  And by (ii), the set $H$ contains all latent factors that have an effect on a node in $Y$ and at the same time an effect on a node in $Z \cup \{v\}$.  Assume there exists a triple of sets $(Y,Z,H)$ with $|H|=|Z|$ and satisfying (i) and (ii) above. Then
$$
    \rank\left(\Omega_{Y, Z \cup \{v\}}\right) = \rank\left(\sum_{h \in H} (\Gamma_h^{\top} \Gamma_h)_{Y, Z \cup \{v\}}\right) \leq |H| = |Z|,
$$
since the matrix $\Gamma_h^{\top} \Gamma_h$ has rank one for each $h \in \cL$. Hence the matrix $\Omega_{Y, Z \cup \{v\}}$ does not have full column rank. 
Moreover, suppose that we are able to ensure that the smaller submatrix $\Omega_{Y,Z}$ is of full column rank $|Z|$. Then, since the column ranks of $\Omega_{Y, Z \cup \{v\}}$ and $\Omega_{Y,Z}$ are equal, the vector $\Omega_{Y,v}$ must be a linear combination of the columns of $\Omega_{Y,Z}$, i.e., there exists $\psi  \in \mathbb{R}^{|Z|}$ such that $\Omega_{Y,Z} \cdot \psi = \Omega_{Y,v}$. 
Using the identity $(I_d - \Lambda)^{\top}\Sigma(I_d-\Lambda) = \Omega$ from \eqref{eq:Omega-start}, this is equivalent to
$$
    [(I_d-\Lambda)^{\top} \Sigma (I_d - \Lambda)]_{Y,v} - [(I_d-\Lambda)^{\top} \Sigma (I_d - \Lambda)]_{Y,Z} \cdot \psi = 0.
$$
Rewriting the matrix on the left we get the system of equations
\begin{equation} \label{eq:matrix-equation}
    \begin{pmatrix} [(I_d-\Lambda)^{\top} \Sigma]_{Y, \pa_V(v)} & [(I_d-\Lambda)^{\top} \Sigma (I_d - \Lambda)]_{Y,Z} \end{pmatrix}\cdot \begin{pmatrix} \Lambda_{\pa_V(v),v} \\ \psi\end{pmatrix} = [(I_d - \Lambda)^{\top} \Sigma]_{Y,v}.
\end{equation}
Now, if we make sure the matrix on the left-hand side in \eqref{eq:matrix-equation} is square and invertible, we can solve the system for the unknown parameters $\Lambda_{\pa_V(v), v}$.  However, for this to be useful for parameter identification, suitable entries of $\Lambda$ must already be known from earlier similar calculations in order to determine the coefficient matrix and the vector on the right-hand side of \eqref{eq:matrix-equation}.

\begin{exmp}
    Consider the graph in Figure \ref{fig:examples} (a). Since there is one latent factor having dense effect on all observed variables, the parameter matrix $\Gamma$ is given by the row vector $(\gamma_{11}, \ldots, \gamma_{15})$. 
    Now focus on node $v=3$ which only has a single observed parent. We aim to recover the effect $\Lambda_{\pa_V(3),3} = \lambda_{23}$ and we claim that the triple $(Y,Z,H)=(\{2,4\},\{1\},\{h_1\})$ satisfies the properties discussed above. 
    Clearly, $|H|=|Z|$, we have empty intersection $Y \cap (Z \cup \{v\})$ and the only common latent parent of $Y$ and $Z \cup \{v\}$ is $h_1$, i.e., $\pa_{\cL}(Y) \cap \pa_{\cL}(Z \cup \{v\}) \subseteq H$. By inspecting the rank one submatrix 
    \[
        \Omega_{Y, Z \cup \{v\}} 
        =
        \begin{pmatrix}
            \gamma_{12} \\
            \gamma_{14}
        \end{pmatrix} 
        \cdot
        \begin{pmatrix}
            \gamma_{11} & \gamma_{13}
        \end{pmatrix}
        = 
        \begin{pmatrix}
            \gamma_{11} \gamma_{12} \ & \ \gamma_{12} \gamma_{13} \\
            \gamma_{11} \gamma_{14} \ & \ \gamma_{13} \gamma_{14}
        \end{pmatrix} 
    \]
    we can easily deduce the relation
    \[
        \Omega_{Y, Z}  \cdot \frac{\gamma_{13}}{\gamma_{11}} = \Omega_{Y,v}
    \]
    which holds true for generic choices of $\gamma_{11}$, i.e., for $\gamma_{11} \neq 0$. In other words, the parameter $\psi$ is equal to $\gamma_{13}/\gamma_{11}$
    and the equation system \eqref{eq:matrix-equation} is given by
    \[
        \begin{pmatrix} 
        \sigma_{22} & \sigma_{12} \\
        -\lambda_{34} \sigma_{23} + \sigma_{24} \ & \ -\lambda_{34} \sigma_{13} + \sigma_{14} 
        \end{pmatrix} 
        \begin{pmatrix} 
        \lambda_{23} \\
        \psi
        \end{pmatrix} = 
        \begin{pmatrix} 
        \sigma_{23} \\
        -\lambda_{34} \sigma_{33} + \sigma_{34}
        \end{pmatrix}
    \]
    where $\sigma_{ij}$ is the $ij$-th entry of the covariance matrix $\Sigma$. If we already knew that the effect $\lambda_{34}$ is given by a rational function in $\Sigma$, then we could also recover the effect $\lambda_{23}$ by a rational function of $\Sigma$ since the matrix on the left-hand side is quadratic and generically invertible. 
\end{exmp}

Our main result shows that the above story can be made practical and yields a criterion to recursively identify columns in $\Lambda$.  Importantly, the imposed conditions can all be translated into combinatorial conditions on the considered latent-factor graph.  The resulting method is proven correct in Theorem~\ref{thm:lfhtc} below.  Before stating the theorem we define the necessary graphical concepts, which involve special types of paths that we term latent-factor half-treks. Recall that a path from node $v$  to $w$ in a latent-factor graph $\Gl = (V \cup \cL, D)$ is a sequence of edges that connects the consecutive nodes in a sequence of nodes beginning in $v$ and ending in $w$. 

\begin{defn}[Latent-factor half-trek]\label{def:latent_halftrek}
  A path $\pi$ in the latent-factor graph $\Gl$ is a
  \emph{latent-factor half-trek from source $v$ to target $w$} if it is a path
  from $v \in V$ to $w \in V$ in $\Gl$ and is of the form
  \[
    v \rightarrow x_1 \rightarrow \dots \rightarrow x_n \rightarrow w
  \]
  or of the form
  \[
    v \leftarrow h \rightarrow x_1 \rightarrow \dots \rightarrow x_n \rightarrow w
  \]
  for $x_1,\dots,x_n\in V$ and for some $h\in \cL$.
\end{defn}

The name latent-factor half-trek is inspired by the customary notion of a trek, which is a pair of directed paths $(\pi_1, \pi_2)$ that share the same source node.
If a latent-factor half-trek is of the first form in Definition \ref{def:latent_halftrek}, we say that the left-hand side of $\pi$, written $\Left(\pi)$, is the node $v$ and the right-hand side, written $\Right(\pi)$, is the set of nodes $\{v, x_1, \ldots, x_n, w\}$. In the second case $\Left(\pi) = \{v, h\}$ and $\Right(\pi) = \{h, x_1, \ldots, x_n, w\}$. A latent-factor half-trek from $v$ to $v$ may have no edges, in this case $\Left(\pi) = \Right(\pi)=\{v\}$ and the half-trek is called trivial. For a set of $n$ latent-factor half-treks, $\Pi = \{\pi_1, \ldots, \pi_n\}$, let $v_i$ and $w_i$ be the source and the target of $\pi_i$. If the sources are all distinct and the targets are all distinct, then we say that $\Pi$ is a system of latent-factor half-treks from $A=\{v_1, \ldots, v_n\}$ to $B=\{w_1, \ldots, w_n\}$. A set of latent-factor half-treks $\Pi = \{\pi_1, \ldots, \pi_n\}$ has no sided intersection if
$$
\Left(\pi_i) \cap \Left(\pi_j) = \emptyset = \Right(\pi_i) \cap \Right(\pi_j) \ \mathrm{for} \ \mathrm{all} \ i \neq j.
$$

 \begin{exmp}
     Consider the graph in Figure \ref{fig:examples} (a). Then the system of latent-factor half-treks
     $$
        \{\pi_1: 5 \leftarrow h_1 \rightarrow 3, \quad \pi_2: 4 \rightarrow 5\}
     $$
     has no sided intersection. On the other hand, the system 
     $$
        \{\widetilde{\pi}_1: 2 \leftarrow h_1 \rightarrow 3, \quad \widetilde{\pi}_2: 3 \rightarrow 4 \rightarrow 5\}
     $$
     has sided intersection since $\Right(\widetilde{\pi}_1) \cap \Right(\widetilde{\pi}_2) = \{3\}$.
 \end{exmp}

\begin{defn}[Latent-factor half-trek reachability]\label{def:latent_htr}
  Let $v,w\in V$ be two distinct observed nodes in a latent-factor graph $\Gl$.  Let $H\subseteq \cL$ be a set of latent factors.  If there exists a latent-factor half-trek from $v$ to
  $w$ through the latent-factor graph $\Gl$, which does not pass through any node in $H$, then we say that \emph{$w$ is half-trek
  reachable from $v$ while avoiding $H$}, and write
  $w\in\htr_H(v)$. For a set $U\subseteq V$, we write $w\in\htr_H(U)$ if
  $w\in\htr_H(u)$ for some $u\in U$.
\end{defn}

 \begin{exmp}
     Consider the graph in Figure \ref{fig:examples} (a), and let $H=\emptyset$. Then $2 \in \htr_H(1)$ since there is the latent-factor half-trek $1 \leftarrow h_1 \rightarrow 2$ and $h_1 \not\in H$. But if $H=\{h_1\}$, then $\htr_H(1) = \emptyset$ since there is no latent-factor half-trek from node $1$ to  any other node in the graph while avoiding the node $h_1$.
 \end{exmp}

\begin{defn}[Latent-factor half-trek criterion]\label{def:latent_htc}
  Given a node $v\in V$, the triple
  $(Y,Z,H)\in 2^{V\setminus \{v\}}\times 2^{V\setminus \{v\}} \times 2^{\cL}$ satisfies the
  \emph{latent-factor half-trek criterion} (LF-HTC) with respect to
  $v$ if
  \begin{enumerate}[(i)]
  \item $|Y| = |\pa_V(v)| + |H|$ and $|Z|=|H|$ with $Z \cap \pa_V(v) = \emptyset$,
  \item $Y\cap (Z\cup \{v\})=\emptyset$ and $\pa_{\cL}(Y) \cap \pa_{\cL}(Z \cup \{v\}) \subseteq H$, and
  \item there exists a system of latent-factor half-treks with no sided intersection from $Y$ to $Z\cup \pa_V(v)$ in $\Gl$, such that for each $z\in Z$, the half-trek terminating at $z$ takes the form $y\leftarrow h \rightarrow z$ for some $y\in Y$ and some $h\in H$.
  \end{enumerate}
\end{defn}

If a triple $(Y,Z,H)$ satisfies the LF-HTC with respect to a node $v$, then
condition (ii) ensures that the submatrix $\Omega_{Y, Z \cup \{v\}}$ of the latent covariance matrix can be written as in \eqref{eq:low-rank matrix} and, since $|Z|=|H|$, the submatrix does not have full column rank. Moreover, condition (iii) ensures that the matrix on the left-hand side of \eqref{eq:matrix-equation} is invertible. The latter claim will be established by means of an application of the Gessel-Viennot-Lindstr\"om Lemma \citep{gessel1985binomial, lindstrom1973on}. We now state our main result; its proof is deferred to Section \ref{sec:proof-lfhtc}. For a directed edge $u \rightarrow y \in D$ we say that $y$ is the \textit{head} of the edge.

\begin{thm}[LF-HTC-identifiability] \label{thm:lfhtc}
  Suppose that the triple
  $(Y,Z,H)\in 2^{V\setminus \{v\}}\times 2^{V\setminus \{v\}}\times
  2^\cL$ satisfies the LF-HTC with respect to $v\in V$. If all
  directed edges $u\to y\in D_V$ with head
  $y\in Z\cup(Y\cap \htr_H(Z\cup\{v\}))$ are rationally
  identifiable, then all directed edges in $D_V$ with $v$ as a head are
  rationally identifiable.
\end{thm}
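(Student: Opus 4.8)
The plan is to recover the vector $\Lambda_{\pa_V(v),v}$ of direct effects into $v$ as the unique solution of the linear system \eqref{eq:matrix-equation}, and then to argue that this solution is a rational function of $\Sigma$. Concretely, I would proceed in three stages: first show that \eqref{eq:matrix-equation} is generically uniquely solvable; second show that its coefficient matrix and right-hand side can be written as rational functions of $\Sigma$ using only the edges assumed to be identified; and finally invoke Cramer's rule. Note that condition (i) makes the coefficient matrix square, since $|Y|=|\pa_V(v)|+|H|=|\pa_V(v)|+|Z|$ equals its number of columns.

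For unique solvability, the starting point is exactly the discussion preceding the theorem: condition (ii) together with $|Z|=|H|$ forces $\Omega_{Y,Z\cup\{v\}}=\sum_{h\in H}(\Gamma_h^{\top}\Gamma_h)_{Y,Z\cup\{v\}}$ to have column rank at most $|H|=|Z|$. I would then show that $\Omega_{Y,Z}$ has full column rank $|Z|$ generically. Condition (iii) supplies, for each $z\in Z$, a half-trek $y\leftarrow h\rightarrow z$ with the sources $y$ pairwise distinct and, by the no-sided-intersection property, the latent nodes $h$ pairwise distinct; the associated $|Z|\times|Z|$ submatrix of $\Omega$ therefore carries the monomials $\prod_{z}\gamma_{h_z y_z}\gamma_{h_z z}$ along a generalized diagonal, and a Gessel--Viennot--Lindström (GVL) non-cancellation argument shows its determinant is a nonzero polynomial. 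Granting full column rank of $\Omega_{Y,Z}$, the rank drop of $\Omega_{Y,Z\cup\{v\}}$ places $\Omega_{Y,v}$ in the column span of $\Omega_{Y,Z}$, which produces $\psi$ and establishes the equivalence with \eqref{eq:matrix-equation}.

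Next I would prove that the square coefficient matrix $M=\begin{pmatrix}[(I_d-\Lambda)^{\top}\Sigma]_{Y,\pa_V(v)} & \Omega_{Y,Z}\end{pmatrix}$ is generically invertible. The tool is again the GVL lemma: rewriting $(I_d-\Lambda)^{\top}\Sigma=\Omega(I_d-\Lambda)^{-1}$ exhibits each entry $[(I_d-\Lambda)^{\top}\Sigma]_{y,w}$ as a sum over latent-factor half-treks from $y$ to $w$, while each entry of $\Omega_{Y,Z}$ records the ``top'' $y\leftarrow h\rightarrow z$ of such a half-trek. The system of half-treks with no sided intersection guaranteed by condition (iii) then indexes a term in the GVL expansion of $\det M$ whose monomial cannot be reproduced by any competing non-intersecting system; hence $\det M\not\equiv 0$, so $M$ is invertible off a proper algebraic subset, and Cramer's rule expresses $\Lambda_{\pa_V(v),v}$ as a ratio of polynomials in the entries of $M$ and of the right-hand side $[(I_d-\Lambda)^{\top}\Sigma]_{Y,v}$.

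The remaining step, and in my view the main obstacle, is to certify that every entry of $M$ and of the right-hand side is itself a rational function of $\Sigma$ built only from the edges assumed identified, i.e.\ those with head in $Z\cup(Y\cap\htr_H(Z\cup\{v\}))$. Via \eqref{eq:Omega-start} and $\Omega=(I_d-\Lambda)^{\top}\Sigma(I_d-\Lambda)$, every such entry reduces to quantities of the form $[(I_d-\Lambda)^{\top}\Sigma]_{y,w}=\Sigma_{y,w}-\sum_{m\in\pa_V(y)}\lambda_{my}\Sigma_{m,w}$, whose face-value evaluation needs the incoming edges of $y$ for \emph{every} $y\in Y$. The difficulty is that the hypothesis supplies these incoming edges only for $y\in Y\cap\htr_H(Z\cup\{v\})$. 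The crux is thus to show that the rows indexed by $y\in Y\setminus\htr_H(Z\cup\{v\})$ can be handled without their possibly unidentified incoming edges. Here I would exploit the half-trek-sum interpretation of $(I_d-\Lambda)^{\top}\Sigma$ together with the avoidance set $H$ to track exactly which directed edges can appear in the relevant entries, aiming to establish that the failure of $y$ to be half-trek reachable from $Z\cup\{v\}$ while avoiding $H$ is precisely what guarantees that those entries are expressible through edges with head in $Z\cup(Y\cap\htr_H(Z\cup\{v\}))$. Once this bookkeeping is in place, combining it with the generic invertibility of $M$ yields rational formulas for all edges into $v$, and intersecting the finitely many proper algebraic subsets encountered above produces the single exceptional set $A$ required by Definition of rational identifiability.
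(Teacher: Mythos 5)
Your first two stages (full column rank of $\Omega_{Y,Z}$ via the half-trek system $y\leftarrow h\rightarrow z$ with distinct latent nodes, and a Gessel--Viennot--Lindstr\"om argument for invertibility of the coefficient matrix) are sound and match the paper's use of Proposition 3.4 of \citet{sullivant2010trek} and its Claim 3. But the step you yourself flag as the ``main obstacle'' is a genuine gap, and the route you sketch for closing it would fail. For $y\in Y\setminus\htr_H(Z\cup\{v\})$, the entry $[(I_d-\Lambda)^{\top}\Sigma]_{y,w}=\Sigma_{y,w}-\sum_{m\in\pa_V(y)}\lambda_{my}\Sigma_{m,w}$ genuinely contains the unidentified parameters $\lambda_{my}$: half-trek non-reachability of $y$ from $Z\cup\{v\}$ does \emph{not} make these terms vanish or become expressible through identified edges, since generically $\Sigma_{m,w}\neq 0$. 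So the system \eqref{eq:matrix-equation} as you set it up is simply not computable from $\Sigma$ under the theorem's hypotheses, and no bookkeeping of which edges ``appear'' in those entries can rescue it.

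The paper's resolution is different in kind: it does not identify those entries but \emph{replaces the equations} in the non-reachable rows. For $y_i\notin\htr_H(Z\cup\{v\})$ it uses the raw rows $\Sigma_{y_i,\pa_V(v)}$, $[\Sigma(I_d-\Lambda)]_{y_i,Z}$ and $\Sigma_{y_i,v}$ (which involve only the identified columns $\Lambda_{*,z}$, $z\in Z$), keeping the $[(I_d-\Lambda)^{\top}\Sigma]$-form only for reachable rows. The price is that one must re-verify that the \emph{same} vector $(\Lambda_{\pa_V(v),v},\psi)$ solves the modified rows, and this is exactly where non-reachability enters: every half-trek from $w\in Z\cup\{v\}$ to $y_i$ must pass through $H$, yielding $[\O(I-\L)^{-1}]_{wy_i}=[\O_H(I-\L)^{-1}]_{wy_i}$ (equation \eqref{eqn:not_htr}), and the cancellation requires defining $\psi$ by the full system $(\O_H)_{V,Z}\,\psi=(\O_H)_{V,v}$ over \emph{all} rows $V$ (equation \eqref{eq:def-psi}), not merely the rows $Y$ as in your stage one, because $[(I-\L)^{-\top}\O_H]_{y_i,\cdot}$ mixes rows of $\O_H$ across all of $V$. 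A second consequence you miss: generic invertibility must then be proved for this \emph{hybrid} matrix, which the paper handles by specializing $\L_{\pa_V(z),z}=0$, restricting each $\O_h$ to a rank-one block on the pair used by the half-trek system, passing to an auxiliary mixed graph $\widehat{G}$, and invoking Lemma 2 of \citet{foygel2012halftrek}; your GVL argument for the unmodified matrix does not cover this case.
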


This theorem yields the basis for an efficient algorithm that recursively solves for all direct causal effects corresponding to the edges $D_V$ in a latent-factor graph. That is, we recover the matrix $\Lambda$ column-by-column. The corresponding algorithm is detailed in Section \ref{sec:computation}. We refer to a latent-factor graph $\Gl$ as \emph{LF-HTC-identifiable} if all columns of  $\Lambda$ may be recovered recursively by Theorem \ref{thm:lfhtc}.

\begin{exmp} \label{ex:first-example}
    The latent-factor graph in Figure \ref{fig:examples} (a) is LF-HTC-identifiable. To see this, we recursively check all nodes $v \in V=\{1,2,3,4,5\}$. That is, for each $v \in V$ we find a triple $(Y,Z,H)$ that satisfies the LF-HTC such that all nodes in $Z\cup(Y\cap \htr_H(Z\cup\{v\}))$ were already checked successfully to satisfy the LF-HTC in the steps before.\\
    $\underline{v=1,2}$: The triple $(Y,Z,H) = (\emptyset, \emptyset, \emptyset)$ trivially satisfies the LF-HTC since $\pa_V(v) = \emptyset$. \\
    $\underline{v=4}$: Let $(Y,Z,H) = (\{2,3\}, \{1\}, \{h_1\})$. Conditions (i) and (ii) are easily checked and for condition (iii) consider the system of latent-factor half-treks $\{3, 2 \leftarrow h_1 \rightarrow 1\}$ where $3$ corresponds to the trivial trek from $3$ to $3$. Finally, note that $Y \cap \htr_H(Z\cup\{v\}) = \{2,3\} \cap \{4,5\} = \emptyset$ and that the node $1 \in Z$ was already checked successfully in the last step. \\
    $\underline{v=3}$: Let $(Y,Z,H) = (\{2,4\}, \{1\}, \{h_1\})$. Then the system of latent-factor half-treks $\{2, 4 \leftarrow h_1 \rightarrow 1\}$ satisfies (iii) and $Z \cup (Y \cap \htr_H(Z\cup\{v\})) = \{1,4\}$. \\
    $\underline{v=5}$: Let $(Y,Z,H) = (\{2,3,4\}, \{1\}, \{h_1\})$. Then the system of latent-factor half-treks $\{3, 4, 2 \leftarrow h_1 \rightarrow 1\}$ satisfies (iii) and $Z \cup (Y \cap \htr_H(Z\cup\{v\})) = \{1\}$. 
\end{exmp}

If the observed part $(V, D_V)$ of a latent-factor graph does not contain directed cycles, then the latent-factor graph is said to be \emph{acyclic}. Moreover, we say that a latent-factor graph is \emph{bow-free} if it does not contain any two observed vertices $v,w \in V$ such that there is a directed edge between $v$ and $w$ and, in addition, there is a latent factor $h \in \cL$ that has directed edges pointing to both $v$ and $w$. As a special case of Theorem \ref{thm:lfhtc} we have the following straightforward observation.

\begin{cor}
Bow-free acyclic latent-factor graphs are rationally identifiable.
\end{cor}
\begin{proof}
Let  $\Gl=(V \cup \cL, D)$ be a latent-factor graph. It is easy to see that for every node $v \in V$ the triple $(Y,Z,H)=(\pa_V(v), \emptyset, \emptyset)$ satisfies the LF-HTC with respect to $v$ since $v$ and $\pa_V(v)$ do not have a common latent parent (i.e., $\pa_{\cL}(\pa_{V}(v)) \cap \pa_{\cL}(v) = \emptyset$). The observed part $(V,D_V)$ is a directed aycylic graph (DAG) and therefore induces at least one topological ordering  $\prec$ on $V$, that is, an ordering such that $v \rightarrow w \in D_V$ only if $v \prec w$. Importantly, all parents $w \in \pa_V(v)$ are predecessors of $v$ with respect to $\prec$. Thus by Theorem \ref{thm:lfhtc} we can determine rational identifiability of all edges in $D_V$ in a step-wise manner according to the ordering $\prec$ and using the triple $(\pa_V(v), \emptyset, \emptyset)$ for each $v \in V$. We conclude that $\Gl$ is LF-HTC-identifiable and hence, in particular, rationally identifiable.
\end{proof}

\section{Latent Projections} \label{sec:latent-proj}

As mentioned in the introduction, previous criteria for rational identifiability of direct causal effects operate on mixed graphs obtained by a projection. These projections can be defined for general directed graphs with hidden variables (\citeauthor{maathuis2019handbook}, \citeyear{maathuis2019handbook}, Chap.~2 and \citeauthor{pearl2009causality}, \citeyear{pearl2009causality}, Chap.~2), but we treat the special case of latent-factor graphs:

\begin{defn}[\citeauthor{maathuis2019handbook}, \citeyear{maathuis2019handbook}, Chap.~2] \label{def:latent-projection}
  Let $\Gl = (V\cup\cL, D)$ be a latent-factor graph. Define a new graph starting with the induced subgraph $G^{\prime} = (V, D_V)$ and add edges as follows:
  $$
    \mathrm{Whenever} \ v \leftarrow h \rightarrow w \ \mathrm{in} \ \Gl \ \mathrm{for} \ h \in \cL \ \mathrm{and} \  v,w \in V, \ \mathrm{add} \ v \leftrightarrow w \ \mathrm{to} \  G^{\prime}.
  $$
  The mixed graph $G^{\prime} = (V, D_V, B)$ is the \emph{latent projection} of $\Gl$, where $B$ is the collection of bidirected edges $v \leftrightarrow w$. They have no orientation, i.e., $v \leftrightarrow w \in B$ if and only if $w \leftrightarrow v \in B$.
\end{defn}


Every mixed graph defines a covariance model. Denote $\textit{PD}(B) \subseteq \textit{PD}(d)$ the subcone of matrices with support $B$, that is, for $\Omega = (\omega_{vw}) \in \textit{PD}(B)$ we have $\omega_{vw} = 0$ if $v \neq w$ and $v \leftrightarrow w \notin B$.

\begin{defn}
  The covariance model given by a mixed graph $G^{\prime} = (V, D_V, B)$  with $V=|d|$ is the family of covariance matrices
  $$
    \Sigma = (I_d - \Lambda)^{-\top}\Omega(I_d-\Lambda)^{-1}
  $$
  obtained from choices of $\Lambda \in \mathbb{R}^{D_V}_{\mathrm{reg}}$ and $\Omega \in \textit{PD}(B)$.
\end{defn}

For any latent-factor graph, the cone of latent covariance matrices $\im(\tau)$ is clearly a subset of  $\textit{PD}(B)$, the cone of latent covariance matrices of the latent projection. Thus, a covariance model given by a latent-factor graph is a submodel of the covariance model given by its latent projection.  More details on the at times subtle differences between  $\im(\tau)$ and $\textit{PD}(B)$ can be found in \cite{drtonyu2010}.

In the remainder of this section, we focus on the predecessor of the LF-HTC that operates on mixed graphs, namely the original half-trek criterion (HTC) of \citet{foygel2012halftrek}. We say that a mixed graph is HTC-identifiable if it is rationally identifiable by this criterion.

At first sight, it appears as if the HTC coincides with the version of the LF-HTC obtained by only allowing $H=Z=\emptyset$; compare Def.~4 in \citet{foygel2012halftrek} with Definition~\ref{def:latent_htc} here.  However, as we will show below there is a subtle difference in the way systems of half-treks with no sided intersection are defined.  Indeed, in the setting of the LF-HTC two half-treks may also intersect at latent nodes, whereas in the HTC intersections are only possible at observed nodes.  Intuitively, each bidirected edge in a latent projection can amount to confounding induced by a separate latent variable.  Before highlighting this subtlety, we first exemplify an application of HTC.

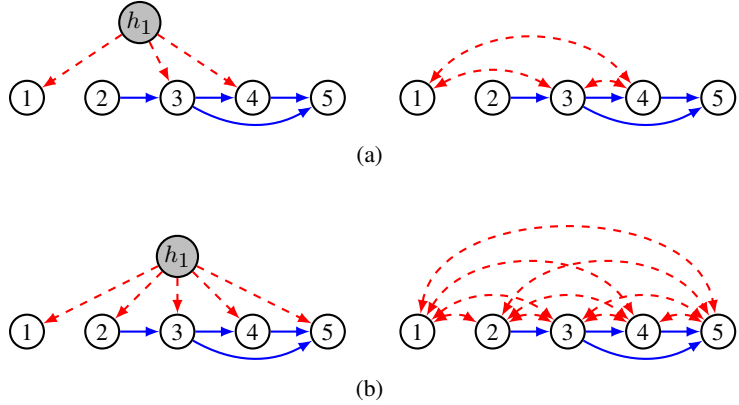
\begin{figure}[t]
    \subfloat[]{
    \centering
    \tikzset{
      every node/.style={circle, inner sep=0.3mm, minimum size=0.45cm, draw, thick, black, fill=white, text=black},
      every path/.style={thick}
    }
    \begin{tikzpicture}[align=center]
      \node[fill=lightgray] (h1) at (-0.5,1) {$h_{1}$};
      \node[] (1) at (-2,0) {1};
      \node[] (2) at (-1,0) {2};
      \node[] (3) at (-0,0) {3};
      \node[] (4) at (1,0) {4};
      \node[] (5) at (2,0) {5};
      
      \draw[red, dashed] [-latex] (h1) edge (1);
      \draw[red, dashed] [-latex] (h1) edge (3);
      \draw[red, dashed] [-latex] (h1) edge (4);
      
      \draw[blue] [-latex] (2) edge (3);
      \draw[blue] [-latex] (3) edge (4);
      \draw[blue] [-latex] (4) edge (5);
      \draw[blue] [-latex, bend right] (3) edge (5);
    \end{tikzpicture}
    \qquad
    \begin{tikzpicture}[align=center]
      \node[] (1) at (-2,0) {1};
      \node[] (2) at (-1,0) {2};
      \node[] (3) at (-0,0) {3};
      \node[] (4) at (1,0) {4};
      \node[] (5) at (2,0) {5};
      
      \draw[red, dashed] [latex-latex, bend left] (1) edge (3);
      \draw[red, dashed] [latex-latex, bend left] (3) edge (4);
      \draw[red, dashed] [latex-latex, bend left, out=50, in=130] (1) edge (4);
      
      \draw[blue] [-latex] (2) edge (3);
      \draw[blue] [-latex] (3) edge (4);
      \draw[blue] [-latex] (4) edge (5);
      \draw[blue] [-latex, bend right] (3) edge (5);
    \end{tikzpicture}
    }

    \subfloat[]{
    \centering
    \tikzset{
      every node/.style={circle, inner sep=0.3mm, minimum size=0.45cm, draw, thick, black, fill=white, text=black},
      every path/.style={thick}
    }
    \begin{tikzpicture}[align=center]
      \node[fill=lightgray] (h1) at (0,1) {$h_{1}$};
      \node[] (1) at (-2,0) {1};
      \node[] (2) at (-1,0) {2};
      \node[] (3) at (-0,0) {3};
      \node[] (4) at (1,0) {4};
      \node[] (5) at (2,0) {5};
      
      \draw[red, dashed] [-latex] (h1) edge (1);
      \draw[red, dashed] [-latex] (h1) edge (2);
      \draw[red, dashed] [-latex] (h1) edge (3);
      \draw[red, dashed] [-latex] (h1) edge (4);
      \draw[red, dashed] [-latex] (h1) edge (5);
      
      \draw[blue] [-latex] (2) edge (3);
      \draw[blue] [-latex] (3) edge (4);
      \draw[blue] [-latex] (4) edge (5);
      \draw[blue] [-latex, bend right] (3) edge (5);
    \end{tikzpicture}
    \qquad
    \begin{tikzpicture}[align=center]
      \node[] (1) at (-2,0) {1};
      \node[] (2) at (-1,0) {2};
      \node[] (3) at (-0,0) {3};
      \node[] (4) at (1,0) {4};
      \node[] (5) at (2,0) {5};
      
      \draw[red, dashed] [latex-latex, bend left] (1) edge (2);
      \draw[red, dashed] [latex-latex, bend left] (2) edge (3);
      \draw[red, dashed] [latex-latex, bend left] (3) edge (4);
      \draw[red, dashed] [latex-latex, bend left] (4) edge (5);
      \draw[red, dashed] [latex-latex, bend left, out=40, in=140] (1) edge (3);
      \draw[red, dashed] [latex-latex, bend left, out=40, in=140] (2) edge (4);
      \draw[red, dashed] [latex-latex, bend left, out=40, in=140] (3) edge (5);
      \draw[red, dashed] [latex-latex, bend left, out=60, in=120] (1) edge (4);
      \draw[red, dashed] [latex-latex, bend left, out=60, in=120] (2) edge (5);
      \draw[red, dashed] [latex-latex, bend left, out=80, in=100] (1) edge (5);
      
      \draw[blue] [-latex] (2) edge (3);
      \draw[blue] [-latex] (3) edge (4);
      \draw[blue] [-latex] (4) edge (5);
      \draw[blue] [-latex, bend right] (3) edge (5);
    \end{tikzpicture}
    }
    \caption{Latent-factor graphs and their latent projection.}
    \label{fig:example-projection}
    
\end{figure}

\begin{exmp} \label{ex:latent-projection}
    Figure \ref{fig:example-projection} shows two latent-factor graphs and their latent projection. Both latent-factor graphs are LF-HTC-identifiable, cf.~Example \ref{ex:first-example}. But only the latent projection in the upper panel (a) is HTC-identifiable while the latent projection in panel (b) is generically infinite-to-one. The latter is easily seen since the number of model parameters corresponding to the mixed graph is larger than the dimension $\binom{d+1}{2}$ of the space $\textit{PD}(d)$, see e.g.~Proposition 2 in \citet{foygel2012halftrek}. 
\end{exmp}

Comparing the graphs in Figure \ref{fig:example-projection}, the latent-factor graphs on the left-hand side  assume that all unobserved confounding is caused by a single latent factor. In contrast, for the latent projections on the right-hand side, there may be multiple latent factors that are the sources of confounding represented by bidirected edges. This leads to rational identifiability of the latent-factor graphs while the projection on the mixed graphs may be generically infinite-to-one. 


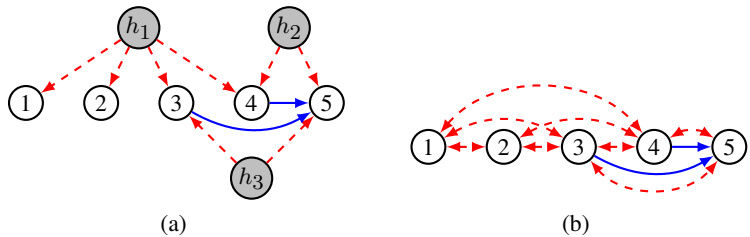
\begin{figure}[b]
    \subfloat[]{
    \centering
    \tikzset{
      every node/.style={circle, inner sep=0.3mm, minimum size=0.45cm, draw, thick, black, fill=white, text=black},
      every path/.style={thick}
    }
    \begin{tikzpicture}[align=center]
      \node[fill=lightgray] (h1) at (-0.5,1) {$h_{1}$};
      \node[fill=lightgray] (h2) at (1.5,1) {$h_{2}$};
      \node[fill=lightgray] (h3) at (1,-1) {$h_{3}$};
      \node[] (1) at (-2,0) {1};
      \node[] (2) at (-1,0) {2};
      \node[] (3) at (-0,0) {3};
      \node[] (4) at (1,0) {4};
      \node[] (5) at (2,0) {5};
      
      \draw[red, dashed] [-latex] (h1) edge (1);
      \draw[red, dashed] [-latex] (h1) edge (2);
      \draw[red, dashed] [-latex] (h1) edge (3);
      \draw[red, dashed] [-latex] (h1) edge (4);
      \draw[red, dashed] [-latex] (h2) edge (4);
      \draw[red, dashed] [-latex] (h2) edge (5);
      \draw[red, dashed] [-latex] (h3) edge (3);
      \draw[red, dashed] [-latex] (h3) edge (5);
      
      \draw[blue] [-latex] (4) edge (5);
      \draw[blue] [-latex, bend right] (3) edge (5);
    \end{tikzpicture}
    }
    \qquad
    \subfloat[]{
    \centering
    \tikzset{
      every node/.style={circle, inner sep=0.3mm, minimum size=0.45cm, draw, thick, black, fill=white, text=black},
      every path/.style={thick}
    }
    \begin{tikzpicture}[align=center]
      \node[] (1) at (-2,0) {1};
      \node[] (2) at (-1,0) {2};
      \node[] (3) at (-0,0) {3};
      \node[] (4) at (1,0) {4};
      \node[] (5) at (2,0) {5};
      
      \draw[red, dashed] [latex-latex] (1) edge (2);
      \draw[red, dashed] [latex-latex] (2) edge (3);
      \draw[red, dashed] [latex-latex] (3) edge (4);
      \draw[red, dashed] [latex-latex, bend left] (1) edge (3);
      \draw[red, dashed] [latex-latex, bend left] (2) edge (4);
      \draw[red, dashed] [latex-latex, bend left, out=50, in=130] (1) edge (4);
      \draw[red, dashed] [latex-latex, bend left] (4) edge (5);
      \draw[red, dashed] [latex-latex, bend right, out=-50, in=-130] (3) edge (5);
      
      \draw[blue] [-latex] (4) edge (5);
      \draw[blue] [-latex, bend right] (3) edge (5);
    \end{tikzpicture}
    }
    \caption{Latent-factor graph that is generically infinite-to-one but its latent projection is HTC-identifiable.}
    \label{fig:counterexmp}
\end{figure}

Surprisingly, a mixed graph $G^{\prime}$ being rationally identifiable does \textit{not} imply that all latent-factor graphs $G^{\cL}$ having $G^{\prime}$ as their latent projection are rationally identifiable.
Recall that in the case of rational identifiability of the latent projection there may be a proper algebraic subset $A$ of the Zariski closure of $\mathbb{R}^{D_V}_{\mathrm{reg}} \times  \textit{PD}(B)$ such that identification is not possible on $A$. If the dimensionality of the cone of latent covariance matrices $\im(\tau)$ is strictly smaller than the dimension of $\textit{PD}(B)$, it can therefore happen that $\Theta = \mathbb{R}^{D_V}_{\mathrm{reg}}  \times \im(\tau) \subseteq A$ and the latent-factor graph is generically infinite-to-one. As an example, the latent projection in Figure \ref{fig:counterexmp} is HTC-identifiable while the latent-factor graph itself is generically infinite-to-one. In this example,  $\dim(\im(\tau)) = 11$ while $\dim(\textit{PD}(B))=13$.  Hence, although the model given by the graph to the left is still a submodel of the one given by the graph to the right, the relevant notion of genericity is different, referring to proper subsets of $\textit{PD}(B)$ and of $\im(\tau)$, respectively.

In the experiments in Section \ref{sec:experiments}, we systematically compare LF-HTC-identifiability of latent-factor graphs with HTC-identifiability applied to the corresponding latent projection.

\section{Computation} \label{sec:computation}

In this section we propose an efficient algorithm for deciding whether a latent-factor graph is LF-HTC-identifiable. It is similar to the algorithm of the original half-trek criterion in \citet{foygel2012halftrek} and makes use of maximum flows in a special flow graph $\Gf=(V_f, D_f)$ from a designated source node $s \subseteq V_f$ to a target node $t \subseteq V_f$. The standard maximum-flow framework is introduced in \citet{cormen2009introduction}. We highlight that the maximum flow can be computed in polynomial time and the complexity is $\mathcal{O}((|V_f|+r)^3)$ where $r \leq |D_f|/2$ is the number of reciprocal edge pairs in $D_f$. A reciprocal edge pair is a pair $v \rightarrow w$ and $w \rightarrow v$ for distinct nodes $v \neq w \in V_f$. 

Let $\Gl$ be a latent-factor graph, and fix a node $v \in V$. Then we denote by  \texttt{LF-HTC}$(\Gl,v)$ the decision problem whether there exists a triple $(Y,Z,H) \in 2^{V\setminus\{v\}}\times 2^{V\setminus \{v\}}\times 2^\cL$ satisfying the LF-HTC for $v \in V$ in $\Gl$.  To solve this problem, we first address a subproblem by assuming that we are given a fixed set $H \subseteq \cL$ and a fixed set $Z \subseteq \ch(H) \setminus (\{v\} \cup \pa_V(v))$ such that $|Z|=|H|$. 
Since the second part of condition (ii) of the LF-HTC is equivalent to $Y \cap \ch(\pa_{\cL}(Z \cup \{v\}) \setminus H) = \emptyset$, the set $A = V\setminus (Z \cup \{v\} \cup \ch(\pa_{\cL}(Z \cup \{v\}) \setminus H))$ is the set of ``allowed'' nodes that may contain a set $Y \subseteq A$ such that $(Y,Z,H)$ satisfies the LF-HTC with respect to $v$. We are able to prove the existence or inexistence of such a set $Y$ efficiently by one maximum flow computation on a suitable flow graph $\Gf(v, A, Z)=(V_f, D_f)$. 

The flow graph is defined as follows: Let $V^{\prime}$ and $\cL^{\prime}$ be copies of the sets $V$ and $\cL$. Then the graph contains the nodes $V_f = (A \cup \cL) \cup (V^{\prime} \cup \cL^{\prime}) \cup \{s,t\}$, where $s$ is a source node and $t$ is a sink node. The set of edges $D_f$ contains
\begin{itemize}
    \item[(a)] $s \rightarrow a$ for all $a \in A$,
    \item[(b)] $a \rightarrow w$ if $a \in A$ and $w \rightarrow a \in D_{\cL V}$,
    \item[(c)] $w \rightarrow w^{\prime}$ for all $w \in A \cup \cL$,
    \item[(d)] $u^{\prime} \rightarrow w^{\prime}$ for all $u \rightarrow w \in D_{\cL V}$ and for all $u \rightarrow w \in D_V $ such that $w \notin Z$,
    \item[(e)] $w^{\prime} \rightarrow t$ for all $w \in \pa_V(v) \cup Z$.
\end{itemize}
We assign to all edges capacity $\infty$. The source node $s$ and the target node $t$ have capacity $\infty$ while all other nodes have capacity 1. Note that, by construction, no flow in $\Gf(v, A, Z)$ can exceed $|\pa_V(v)| + |Z|$ in size, therefore one may replace the infinite capacities with  $|\pa_V(v)| + |Z|$ in practice. An example of a flow graph is shown in Figure \ref{fig:example-flow-graph} (b).

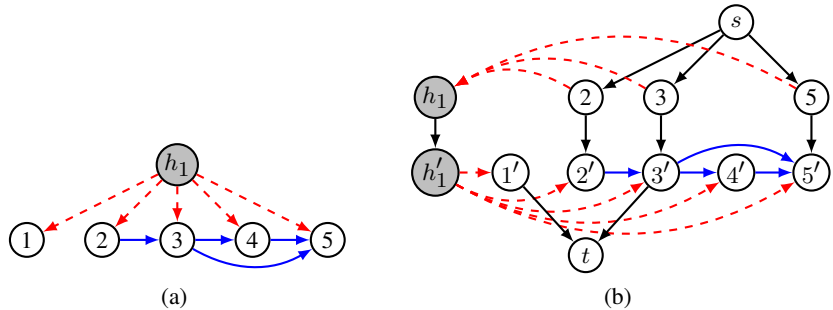
\begin{figure}[b]

    \subfloat[]{
    \centering
    \tikzset{
      every node/.style={circle, inner sep=0.3mm, minimum size=0.45cm, draw, thick, black, fill=white, text=black},
      every path/.style={thick}
    }
    \begin{tikzpicture}[align=center]
      \node[fill=lightgray] (h1) at (0,1) {$h_{1}$};
      \node[] (1) at (-2,0) {1};
      \node[] (2) at (-1,0) {2};
      \node[] (3) at (-0,0) {3};
      \node[] (4) at (1,0) {4};
      \node[] (5) at (2,0) {5};
      
      \draw[red, dashed] [-latex] (h1) edge (1);
      \draw[red, dashed] [-latex] (h1) edge (2);
      \draw[red, dashed] [-latex] (h1) edge (3);
      \draw[red, dashed] [-latex] (h1) edge (4);
      \draw[red, dashed] [-latex] (h1) edge (5);
      
      \draw[blue] [-latex] (2) edge (3);
      \draw[blue] [-latex] (3) edge (4);
      \draw[blue] [-latex] (4) edge (5);
      \draw[blue] [-latex, bend right] (3) edge (5);
    \end{tikzpicture}
    }
    \qquad
    \subfloat[]{
    \centering
    \tikzset{
      every node/.style={circle, inner sep=0.3mm, minimum size=0.45cm, draw, thick, black, fill=white, text=black},
      every path/.style={thick}
    }
    \begin{tikzpicture}[align=center]
      \node[fill=lightgray] (h1) at (-3,0) {$h_{1}$};
      \node[] (2) at (-1,0) {2};
      \node[] (3) at (-0,0) {3};
      \node[] (5) at (2,0) {5};
      
      \node[fill=lightgray] (h1p) at (-3,-1) {$h_{1}^{\prime}$};
      \node[] (1p) at (-2,-1) {$1^{\prime}$};
      \node[] (2p) at (-1,-1) {$2^{\prime}$};
      \node[] (3p) at (-0,-1) {$3^{\prime}$};
      \node[] (4p) at (1,-1) {$4^{\prime}$};
      \node[] (5p) at (2,-1) {$5^{\prime}$};
      
      \node[] (s) at (1,1) {$s$};
      \node[] (t) at (-1,-2.1) {$t$};
      
      \draw[] [-latex] (s) edge (2);
      \draw[] [-latex] (s) edge (3);
      \draw[] [-latex] (s) edge (5);
       
      \draw[red, dashed] [-latex, bend right] (2) edge (h1);
      \draw[red, dashed] [-latex, bend right] (3) edge (h1);
      \draw[red, dashed] [-latex, bend right] (5) edge (h1);
      
      \draw[] [-latex] (h1) edge (h1p);
      \draw[] [-latex] (2) edge (2p);
      \draw[] [-latex] (3) edge (3p);
      \draw[] [-latex] (5) edge (5p);
      
      \draw[red, dashed] [-latex] (h1p) edge (1p);
      \draw[red, dashed] [-latex, bend right] (h1p) edge (2p);
      \draw[red, dashed] [-latex, bend right] (h1p) edge (3p);
      \draw[red, dashed] [-latex, bend right] (h1p) edge (4p);
      \draw[red, dashed] [-latex, bend right] (h1p) edge (5p);
      
      \draw[blue] [-latex] (2p) edge (3p);
      \draw[blue] [-latex] (3p) edge (4p);
      \draw[blue] [-latex] (4p) edge (5p);
      \draw[blue] [-latex, bend left] (3p) edge (5p);
      
      \draw[] [-latex] (1p) edge (t);
      \draw[] [-latex] (3p) edge (t);
    \end{tikzpicture}
    }
    \caption{Using maximum-flow to find a set $Y \subseteq A$ such that the triple $(Y,Z,H)$ with fixed sets $H=\{h_1\}$ and $Z=\{1\}$ satisfies the LF-HTC with respect to $v=4$. The set of allowed nodes is $A=\{2,3,5\}$. (a) The concerned latent-factor graph. (b) The corresponding flow graph $\Gf(v, A, Z)$. }
    \label{fig:example-flow-graph}
\end{figure}

\newpage
Let $\texttt{MaxFlow}(\Gf(v, A, Z))$ be the maximum flow from $s$ to $t$ in the graph $\Gf(v, A, Z)$. 
The following theorem is proven in Appendix A in the supplement \citep{supplemental}.

\begin{thm} \label{thm:check-solution}
  Let $\Gl=(V \cup \cL, D)$ be a latent-factor graph, and fix a node $v \in V$, a set $H \subseteq \cL$ and a set $Z \subseteq \textrm{ch}(H) \setminus (\{v\} \cup \pa_V(v))$ such that $|Z|=|H|$. For the set of allowed nodes $A = V\setminus (Z \cup \{v\} \cup \mathrm{ch}(\pa_{\cL}(Z \cup \{v\}) \setminus H))$ we have that $\texttt{MaxFlow}(\Gf(v, A, Z)) = |\pa_V(v)| + |Z|$ if and only if there exists $Y \subseteq A$ such that the triple $(Y,Z,H)$ satisfies the LF-HTC for $v \in V$.
\end{thm}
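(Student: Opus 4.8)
The plan is to prove the theorem by setting up a precise dictionary between integral $s$-$t$ flows in $\Gf(v,A,Z)$ and systems of latent-factor half-treks of the type demanded by the LF-HTC, and then to invoke the integrality of maximum flow. First I would record the elementary upper bound: the only edges into the sink $t$ are the edges $w' \to t$ with $w \in \pa_V(v) \cup Z$ (edge type (e)), and since $Z \cap \pa_V(v) = \emptyset$ there are exactly $|\pa_V(v)| + |Z|$ such edges, each leaving a primed node of capacity $1$; hence every flow has value at most $|\pa_V(v)| + |Z|$. Because all capacities are integral, a maximum flow may be taken integral and, after discarding circulations, decomposed into unit-capacity $s$-$t$ paths that are pairwise node-disjoint on $V_f \setminus \{s,t\}$ (each such node has capacity $1$). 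The theorem then amounts to showing that the value $|\pa_V(v)| + |Z|$ is achievable exactly when a qualifying triple $(Y,Z,H)$ exists.

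The heart of the argument is the dictionary. I would show that a single unit-flow path out of $s$ has one of exactly two shapes: either $s \to y \to y' \to x_1' \to \dots \to w'\to t$, encoding a half-trek $y \to x_1 \to \dots \to w$ of the first form, or $s \to y \to h \to h' \to x_1' \to \dots \to w' \to t$, encoding a half-trek $y \leftarrow h \to x_1 \to \dots \to w$ of the second form; this is read off directly from the five edge types, using that an unprimed node can only be left via the ``bridge'' edge (c) $w \to w'$ before the path descends through the primed copies. Under this encoding the unprimed nodes visited by a path are exactly the members of $\Left(\pi)$ and the primed nodes are exactly the members of $\Right(\pi)$. Consequently the capacity-$1$ constraints on the unprimed and on the primed nodes translate verbatim into the requirement that the $\Left$-sides are pairwise disjoint and the $\Right$-sides are pairwise disjoint, i.e.\ into the no-sided-intersection property. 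Every source of such a path lies in $A$ (edges (a) only enter $A$), giving $Y \subseteq A$; by the equivalence noted just before the theorem this is exactly condition (ii), and in particular forces $Y \cap (Z \cup \{v\}) = \emptyset$.

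A point that I expect to need care is the behaviour at targets in $Z$. Because edge type (d) admits an observed edge $u \to w \in D_V$ only when $w \notin Z$, the sole way for a path to enter a primed node $z'$ with $z \in Z$ is along a latent edge $h' \to z'$ coming from $D_{\cL V}$; tracing backwards, such a path must read $s \to y \to h \to h' \to z' \to t$, so the half-trek terminating at $z$ is forced into the form $y \leftarrow h \to z$ with $h \in \pa_{\cL}(z)$. Moreover, since $y \in A$ excludes $y$ from $\ch(\pa_{\cL}(Z \cup \{v\}) \setminus H)$, the latent $h \in \pa_{\cL}(y) \cap \pa_{\cL}(z)$ must in fact lie in $H$. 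This is precisely the extra stipulation in condition (iii). Conversely, given a qualifying system $\Pi$, I would map each half-trek to the corresponding path by the same dictionary; here one checks that the descending $D_V$-steps are legal, i.e.\ that no intermediate node of a half-trek aimed at $\pa_V(v)$ lies in $Z$, which follows from no-sided-intersection because each $z \in Z$ already occupies the right side of its own half-trek.

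Finally I would assemble the two directions. In the forward direction the constructed paths are pairwise node-disjoint (again by no-sided-intersection, separating the unprimed and primed copies), so they form a feasible integral flow of value $|Y| = |\pa_V(v)| + |H| = |\pa_V(v)| + |Z|$, meeting the upper bound. In the backward direction a maximum flow of this value decomposes into $|\pa_V(v)| + |Z|$ node-disjoint unit paths; reading off their sources yields a set $Y \subseteq A$ with $|Y| = |\pa_V(v)| + |Z| = |\pa_V(v)| + |H|$ (so condition (i) holds, using the hypotheses $|H| = |Z|$ and $Z \cap \pa_V(v) = \emptyset$), and the encoded half-treks form a system with no sided intersection from $Y$ onto all of $Z \cup \pa_V(v)$ (every target edge is saturated, as there are as many paths as target edges) obeying the $Z$-form requirement, i.e.\ condition (iii). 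The main obstacle, and the place demanding the most bookkeeping, is verifying that the dictionary is faithful in both directions: in particular that the $Z$-exclusion built into edge (d), together with the definition of $A$, forces exactly the constrained half-treks at $Z$ with their latent in $H$, and that the node capacities capture the sided-intersection condition without spurious interaction between the left (unprimed) and right (primed) copies of a node.
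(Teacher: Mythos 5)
Your proposal is correct and follows essentially the same route as the paper's proof: a path-by-path dictionary between integral unit flows in $\Gf(v,A,Z)$ and latent-factor half-treks, with the capacity-$1$ constraints on the unprimed and primed node layers encoding disjointness of the $\Left$ and $\Right$ sides, the edge-type (d) restriction forcing the form $y \leftarrow h \rightarrow z$ at targets in $Z$, and membership $y \in A$ forcing $h \in H$, all combined with integrality of the maximum flow. If anything, you are slightly more careful than the paper, which leaves implicit the check you make explicitly via no sided intersection---that a half-trek aimed at $\pa_V(v)$ never passes through a node of $Z$, so that the corresponding edges $u' \rightarrow w'$ actually exist in the flow graph.
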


For solving \texttt{LF-HTC}$(\Gl,v)$ we iterate over all suitable sets $H \subseteq \cL$ and $Z \subseteq \ch(H) \setminus (\{v\} \cup \pa_V(v))$ such that $|Z|=|H|$ and check for each pair $(Z,H)$ if there is a corresponding set $Y \subseteq A$. In each iteration, we have to compute one maximum flow by Theorem \ref{thm:check-solution}. It is enough to iterate over subsets $H \subseteq \cL_{\geq 4}$ where  $\cL_{\geq 4} = \{h \in \cL : |\ch(h)| \geq 4\}$ contains only those latent nodes with more than four children. Recall that the children of a node $v \in V \cup \cL$ are formally defined as $\ch(v) = \{w \in V \cup \cL : v \rightarrow w \in D\}.$   We prove the following fact in Appendix A in the supplement.

\begin{prop} \label{prop:four-children}
  Let $\Gl=(V \cup \cL, D)$ be a latent-factor graph, and fix a node $v \in V$. If the triple  $(Y,Z,H)$ satisfies the LF-HTC for $v\in V$ and there is a node $h \in H$ such that $|\textrm{ch}(h)| \leq 3$, then there are subsets $\widetilde{Y} \subseteq Y$ and $\widetilde{Z} \subseteq Z$ such that the triple $(\widetilde{Y}, \widetilde{Z} , \widetilde{H})$ with $\widetilde{H} = H \setminus \{h\}$  satisfies the LF-HTC for $v\in V$ as well.
\end{prop}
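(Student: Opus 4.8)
The plan is to exploit the near-bijective structure that conditions (i)--(iii) force between $Z$ and $H$, delete one matched half-trek, and check that the pruned triple still satisfies the LF-HTC with respect to $v$. First I would extract a bijection between $Z$ and $H$. By condition (iii), for each $z\in Z$ there is a half-trek $\pi_z$ terminating at $z$ of the form $y\leftarrow h_z\rightarrow z$ with $y\in Y$ and $h_z\in H$, so that $\Right(\pi_z)=\{h_z,z\}$. Since the system has no sided intersection, the right-hand sides of distinct half-treks are disjoint, forcing $h_z\neq h_{z'}$ whenever $z\neq z'$; hence $z\mapsto h_z$ is injective, and because $|Z|=|H|$ it is a bijection. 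In particular the distinguished node $h$ with $|\ch(h)|\le 3$ equals $h_{z_h}$ for a unique $z_h\in Z$, and the half-trek $\pi_{z_h}$ has the form $y_h\leftarrow h\rightarrow z_h$ for some $y_h\in Y$. I then set $\widetilde{Y}=Y\setminus\{y_h\}$, $\widetilde{Z}=Z\setminus\{z_h\}$, and $\widetilde{H}=H\setminus\{h\}$.

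Conditions (i) and (iii), together with the first half of (ii), are then essentially bookkeeping. For (i), removing one element from each of $Y,Z,H$ preserves $|\widetilde{Y}|=|\pa_V(v)|+|\widetilde{H}|$ and $|\widetilde{Z}|=|\widetilde{H}|$, and $\widetilde{Z}\subseteq Z$ keeps $\widetilde{Z}\cap\pa_V(v)=\emptyset$. The disjointness $\widetilde{Y}\cap(\widetilde{Z}\cup\{v\})=\emptyset$ is inherited from the original triple. For (iii), I would simply delete $\pi_{z_h}$ from the original system. The remaining half-treks still have no sided intersection (a sub-system of a system with no sided intersection), their sources are $Y\setminus\{y_h\}=\widetilde{Y}$ and their targets are $(Z\cup\pa_V(v))\setminus\{z_h\}=\widetilde{Z}\cup\pa_V(v)$ (using $z_h\notin\pa_V(v)$), and for each remaining $z\in\widetilde{Z}$ the terminating half-trek is unchanged, of the required form $y\leftarrow h_z\rightarrow z$ with $y\in\widetilde{Y}$ (sources are distinct, so $y\neq y_h$) and $h_z\in\widetilde{H}$ (since $h_z\neq h$).

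The crux is the second part of condition (ii): that $\pa_{\cL}(\widetilde{Y})\cap\pa_{\cL}(\widetilde{Z}\cup\{v\})\subseteq\widetilde{H}$. Passing to subsets only shrinks latent-parent sets, so this intersection is contained in $\pa_{\cL}(Y)\cap\pa_{\cL}(Z\cup\{v\})\subseteq H$; the only thing to rule out is that the deleted factor $h$ reappears in it. This is precisely where the hypothesis $|\ch(h)|\le 3$ enters. From $\pi_{z_h}$ we know $\{y_h,z_h\}\subseteq\ch(h)$, and these are distinct ($y_h\in Y$, $z_h\in Z$, $Y\cap Z=\emptyset$), so $\ch(h)$ contains at most one further node. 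I would argue by cases on this remaining child. If $v\in\ch(h)$, then $\ch(h)=\{y_h,z_h,v\}$, none of which lies in $\widetilde{Y}=Y\setminus\{y_h\}$ (as $z_h,v\notin Y$), so $h\notin\pa_{\cL}(\widetilde{Y})$. Otherwise $v\notin\ch(h)$ and the possible extra child is a single node $u$; since $u$ can lie in at most one of the disjoint sets $\widetilde{Y}$ and $\widetilde{Z}$, at least one of $\ch(h)\cap\widetilde{Y}$ and $\ch(h)\cap(\widetilde{Z}\cup\{v\})$ is empty, so $h$ fails to be a common latent parent. Either way $h\notin\pa_{\cL}(\widetilde{Y})\cap\pa_{\cL}(\widetilde{Z}\cup\{v\})$, which yields the desired inclusion and completes the verification.

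I expect this case analysis to be the only genuinely delicate point, and it also explains why the bound is stated as $|\ch(h)|\le 3$ rather than something larger: the two children $y_h,z_h$ are consumed by the removed half-trek, leaving room for exactly one more child, which cannot simultaneously meet $\widetilde{Y}$ and $\widetilde{Z}\cup\{v\}$. A fourth child could land one endpoint in $\widetilde{Y}$ and another in $\widetilde{Z}$, reviving $h$ as a common confounder and breaking condition (ii), so the threshold is tight for this argument.
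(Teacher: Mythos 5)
Your proposal is correct and follows essentially the same route as the paper's proof: both remove the unique terminating half-trek $y_h \leftarrow h \rightarrow z_h$ through $h$ (the paper asserts its uniqueness directly, while you derive it via the bijection $z \mapsto h_z$ forced by no sided intersection) and then verify condition (ii) by noting that the at most one remaining child of $h$ cannot lie in both $\widetilde{Y}$ and $\widetilde{Z} \cup \{v\}$. Your case split on whether $v \in \ch(h)$ versus the paper's split on $|\ch(h)| \in \{2,3\}$ is only a cosmetic reorganization of the same argument.
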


\begin{algorithm}[b]
\caption{Testing LF-HTC-identifiability of a latent-factor graph}\label{graph-algo}
\begin{algorithmic}[1]
\REQUIRE Latent-factor graph $\Gl = (V \cup \cL, D)$.\\
\ENSURE Solved nodes $S \leftarrow \{v \in V: \pa_V(v) = \emptyset\}$.
\REPEAT 
    \FOR {$v \in V \setminus S$}
        \FOR {$H \in \cL_{\geq 4}$}
            \STATE{$Z_a \leftarrow (S \cap \ch(H)) \setminus (\{v\} \cup \pa_V(v))$.}
            \FOR {$Z \subseteq Z_a$ such that $|Z| = |H|$} 
                \STATE{$A \leftarrow V \setminus (Z \cup \{v\} \cup \ch(\pa_{\cL}(Z \cup \{v\}) \setminus H) \cup (\htr_H(Z \cup \{v\}) \setminus S))$.}
                \IF {$\texttt{MaxFlow}(\Gf(v,A,Z))  = |\pa_V(v)| + |Z|$}
                     \STATE {$S \leftarrow S \cup \{v\}$}
                     \BREAK
                \ENDIF
            \ENDFOR
            \IF {$v \in S$}
                \BREAK
            \ENDIF
        \ENDFOR
    \ENDFOR
\UNTIL{$S = V$ or no change has occurred in the last iteration.}
\RETURN ``yes'' if $S=V$, ``no'' otherwise.
\end{algorithmic}
\label{alg:check-lfhtc}
\end{algorithm}

Next, we give an algorithm to determine whether a graph $\Gl$ is LF-HTC-identifiable by iterating over all nodes $v \in V$ and solving \texttt{LF-HTC}$(\Gl,v)$ in each step. Moreover, when solving \texttt{LF-HTC}$(\Gl,v)$ for a specific node $v \in V$, we have to make sure that, for a possible solution $(Y,Z,H)$, each node $w \in Z \cup (Y \cap \htr_H(Z \cup \{v\}))$ was solved before. This intuition is formalized in Algorithm \ref{alg:check-lfhtc}. In Theorem \ref{thm:algo-correct} we prove that the algorithm correctly determines LF-HTC-identifiability. Our implementation of Algorithm \ref{alg:check-lfhtc} is included in the \texttt{R} package \texttt{SEMID} as of version 0.4.0 \citep{R, semid}, which is available on \texttt{CRAN}, the Comprehensive R Archive Network.

\begin{thm} \label{thm:algo-correct}
  A latent-factor graph $\Gl = (V \cup \cL, D)$ is LF-HTC-identifiable if and only if Algorithm \ref{alg:check-lfhtc} returns ``yes''. If we only allow sets $H$ with $|H| \leq k$ in line $3$, then the algorithm has complexity at most $\mathcal{O}(|V|^{2+k} |\cL|^k (|V|+|\cL|+r)^3)$ where $r \leq |D_V|/2$ is the number of reciprocal edge pairs in $D_V$.
\end{thm}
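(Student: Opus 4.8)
The plan is to prove the stated equivalence and the complexity bound separately, carrying throughout the loop invariant that at every moment the set $S$ consists exactly of nodes all of whose incoming $D_V$-edges have already been certified rationally identifiable. The proof rests on three earlier results: Theorem~\ref{thm:lfhtc}, which converts a successful LF-HTC step into an identifiability conclusion; Theorem~\ref{thm:check-solution}, which reduces the search for an admissible set $Y$ to a single maximum-flow computation; and Proposition~\ref{prop:four-children}, which is what licenses the restriction of the search over $H$ to sets of latent nodes with at least four children (line~3).

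For the direction ``the algorithm returns yes $\Rightarrow$ $\Gl$ is LF-HTC-identifiable'' I would induct on the order in which nodes enter $S$. The initialization $S\leftarrow\{v:\pa_V(v)=\emptyset\}$ satisfies the invariant vacuously. Whenever a node $v$ is inserted in line~8, Theorem~\ref{thm:check-solution}---applied to the allowed set $A$ actually used by the algorithm---supplies a triple $(Y,Z,H)$ solving $\texttt{LF-HTC}(\Gl,v)$. The definition of $Z_a$ in line~4 forces $Z\subseteq S$, while deleting the term $\htr_H(Z\cup\{v\})\setminus S$ from $A$ in line~6 forces $Y\cap\htr_H(Z\cup\{v\})\subseteq S$; hence $Z\cup\bigl(Y\cap\htr_H(Z\cup\{v\})\bigr)\subseteq S$. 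By the inductive invariant all $D_V$-edges with heads in this set are identifiable, so Theorem~\ref{thm:lfhtc} certifies every $D_V$-edge into $v$, and the invariant is maintained. If the algorithm returns yes then $S=V$ at termination, so $\Lambda$ is recovered column by column, i.e.\ $\Gl$ is LF-HTC-identifiable.

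The reverse implication is the main obstacle. I would argue by contradiction: assume the algorithm halts with final set $S^\ast\neq V$ and fix any recursive certificate of LF-HTC-identifiability, that is, an ordering $v_1,\dots,v_d$ together with triples $(Y_i,Z_i,H_i)$ such that each prerequisite set $Z_i\cup\bigl(Y_i\cap\htr_{H_i}(Z_i\cup\{v_i\})\bigr)$ is contained in $\{v_1,\dots,v_{i-1}\}$. Let $v=v_i$ be the first node of this ordering that is missing from $S^\ast$; by minimality its prerequisite set lies in $S^\ast$. When the triple already satisfies $H_i\subseteq\cL_{\geq4}$ the verification is direct: condition~(iii) gives $Z_i\subseteq\ch(H_i)$ and conditions~(i)--(ii) give $Z_i\cap(\{v\}\cup\pa_V(v))=\emptyset$ with $Z_i\subseteq S^\ast$, so $Z_i\subseteq Z_a$ and $(Z_i,H_i)$ is among the pairs tried; moreover $Y_i\cap(Z_i\cup\{v\})=\emptyset$, $Y_i$ avoids $\ch(\pa_\cL(Z_i\cup\{v\})\setminus H_i)$, and $Y_i\cap\htr_{H_i}(Z_i\cup\{v\})\subseteq S^\ast$ together give $Y_i\subseteq A$. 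Since deleting nodes from $A$ only removes source edges of the flow graph, the proof of Theorem~\ref{thm:check-solution} applies verbatim to this smaller $A$ and yields $\texttt{MaxFlow}(\Gf(v,A,Z_i))=|\pa_V(v)|+|Z_i|$; thus in the pass with $S=S^\ast$ the algorithm would insert $v$, contradicting that no change occurred. The delicate point is the reduction to $H_i\subseteq\cL_{\geq4}$ by repeated application of Proposition~\ref{prop:four-children}: because the prerequisite set depends on $H_i$ through $\htr_{H_i}$, one must verify that replacing $H_i$ by $H_i\setminus\{h\}$ (for a latent node $h$ with at most three children) and shrinking $Y_i,Z_i$ accordingly does not enlarge the prerequisite set beyond $\{v_1,\dots,v_{i-1}\}$. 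I would handle this by analyzing the two ways a node can become half-trek reachable after $h$ is no longer avoided---either along a half-trek still avoiding $h$, which keeps it inside the old prerequisite set, or along a half-trek peaking at $h$---and showing, using the structure furnished by the proof of Proposition~\ref{prop:four-children}, that the latter nodes are excluded from the reduced $\widetilde Y$. Establishing this stability of the prerequisite set under the pruning is the crux of the argument.

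The complexity bound under the restriction $|H|\le k$ follows by counting. The outer \textbf{repeat} loop runs at most $|V|+1$ times, since any non-terminating pass enlarges $S$ and $|S|\le|V|$. Inside, the loops range over $v\in V\setminus S$ ($O(|V|)$ choices), over admissible $H$ with $|H|\le k$ ($\sum_{i\le k}\binom{|\cL|}{i}=O(|\cL|^k)$ choices), and over $Z\subseteq Z_a$ with $|Z|=|H|$ ($O(|V|^k)$ choices); forming $Z_a$, $A$ and the sets $\htr_H(\cdot)$ is dominated by the flow computation. Each iteration performs one maximum flow on $\Gf(v,A,Z)$, which has $|V_f|=O(|V|+|\cL|)$ nodes; a reciprocal edge pair in $D_f$ can only be created by two type-(d) edges $u'\to w'$ and $w'\to u'$, which forces $u\leftrightarrow w$ in $D_V$ (latent edges, having observed heads only, never reciprocate), so the number of reciprocal pairs in $D_f$ is at most $r\le|D_V|/2$. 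Hence each flow costs $\mathcal{O}((|V|+|\cL|+r)^3)$, and multiplying by the loop counts $|V|\cdot|V|\cdot|\cL|^k\cdot|V|^k$ gives the claimed $\mathcal{O}(|V|^{2+k}|\cL|^k(|V|+|\cL|+r)^3)$.
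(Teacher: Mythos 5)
Your forward direction and your complexity count are correct and essentially identical to the paper's proof: the paper likewise argues that a node enters $S$ only when Theorem~\ref{thm:check-solution} supplies a triple, reads off $Z\subseteq S$ from line~4 and $Y\cap\htr_H(Z\cup\{v\})\subseteq S$ from line~6, and invokes Theorem~\ref{thm:lfhtc}; and it performs the same loop count ($|V|$ passes, $|V|$ nodes, $\mathcal{O}(|\cL|^k)$ sets $H$, $\mathcal{O}(|V|^k)$ sets $Z$, one $\mathcal{O}((|V|+|\cL|+r)^3)$ flow each). Your observation that reciprocal pairs in $D_f$ can arise only from type-(d) edges coming from reciprocal pairs in $D_V$ is a detail the paper asserts without justification, so that part is a small improvement. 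Your reverse direction also follows the paper's route (certificate ordering, first node missing from $S$, verify $Z\subseteq Z_a$ and $Y\subseteq A$, apply Theorem~\ref{thm:check-solution}).

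The genuine gap is exactly at the step you call the crux, and your sketched resolution does not go through as stated. You propose to show that the nodes that become half-trek reachable after $h$ (with $|\ch(h)|\leq 3$) is removed from $H$ ``are excluded from the reduced $\widetilde Y$.'' Analyze such a new half-trek: it must pass through $h$, hence has the form $c\leftarrow h\rightarrow x_1\rightarrow\cdots$ with $c\in\widetilde Z\cup\{v\}$ and $c,x_1\in\ch(h)$; since $y\notin\widetilde Z\cup\{v\}$ and $z\notin\widetilde Z$, the source $c$ must be the (at most one) third child of $h$, and $x_1\in\{y,z\}$. Continuations through $z$ yield nothing new, because directed paths from $z\in Z$ pass no latent node and so those targets already lie in $\htr_H(Z\cup\{v\})$. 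But continuations through $y$ make $y$ \emph{and all directed descendants of $y$} reachable from $\widetilde Z\cup\{v\}$, and Proposition~\ref{prop:four-children} deletes only $y$ itself from $Y$: descendants of $y$ may remain in $\widetilde Y$ without ever having been required to precede $v$ in the certificate ordering. So the prerequisite set $\widetilde Z\cup\bigl(\widetilde Y\cap\htr_{\widetilde H}(\widetilde Z\cup\{v\})\bigr)$ can genuinely grow, and your claimed exclusion fails; the stability you need is left unproved. For what it is worth, the paper's own proof does not contain the argument you were attempting: it simply writes that ``at one point, we will visit the correct set $H_{v_k}\in\cL_{\geq 4}$ in line 3 due to Proposition~\ref{prop:four-children},'' implicitly assuming the certificate triples can be pruned to $H_v\subseteq\cL_{\geq 4}$ compatibly with the ordering. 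You have therefore correctly isolated a point where the published argument is terse, but your proposal does not close it — as submitted, the reverse implication is incomplete at precisely the step on which the restriction to $\cL_{\geq 4}$ (and hence the stated polynomial complexity) rests.
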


In Algorithm \ref{alg:check-lfhtc} we iterate over subsets of the power sets of $\cL$ and $V$, and we put effort into iterating over a small subset. Nevertheless, if we allow the cardinality of $|H|$ to be unbounded in line three, then we search over an exponentially large space and, thus, our algorithm will in general take exponential time $\mathcal{O}(2^{|\cL|+|V|})$.  In fact, there is a fundamental barrier in finding a polynomial time algorithm as we are able to show that \texttt{LF-HTC}$(\Gl,v)$  is an NP-complete problem. 

To see that \texttt{LF-HTC}$(\Gl,v)$ is NP-complete, first note that \texttt{LF-HTC}$(\Gl,v)$ is in the NP-complexity class due to Theorem \ref{thm:check-solution}. Every candidate triple $(Y,Z,H)$ to solve \texttt{LF-HTC}$(\Gl,v)$ can be checked to be a solution in polynomial time by first checking if $(Y,Z,H)$ satisfies conditions (i) and (ii) of the LF-HTC and then checking if $\texttt{MaxFlow}(\Gf(v, Y, Z)) = |\pa_V(v)| + |Z|$.   Moreover, we are able to show NP-hardness of \texttt{LF-HTC}$(\Gl,v)$ by a reduction from  the Boolean satisfiability problem in conjunctive normal form; this result is developed in Appendix B in the supplement \citep{supplemental}.

\section{Numerical Experiments} \label{sec:experiments}

This section reports on the results of experiments with small latent-factor graphs, for which the identification problem can be fully solved by techniques from computational algebraic geometry, as we discuss in Appendix C in the supplement \citep{supplemental}. We study acyclic latent-factor graphs with $|V|=6$ observed nodes.

\begin{figure}[b]
    \centering
    \tikzset{
      every node/.style={circle, inner sep=0.3mm, minimum size=0.45cm, draw, thick, black, fill=white, text=black},
      every path/.style={thick}
    }
    \begin{tikzpicture}[align=center]
      \node[fill=lightgray] (h1) at (0,1.3) {$h_{1}$};
      \node[] (1) at (-2.5,0) {};
      \node[] (2) at (-1.5,0) {};
      \node[] (3) at (-0.5,0) {};
      \node[] (4) at (0.5,0) {};
      \node[] (5) at (1.5,0) {};
      \node[] (6) at (2.5,0) {};
      
      \draw[red, dashed] [-latex] (h1) edge (1);
      \draw[red, dashed] [-latex] (h1) edge (2);
      \draw[red, dashed] [-latex] (h1) edge (3);
      \draw[red, dashed] [-latex] (h1) edge (4);
      \draw[red, dashed] [-latex] (h1) edge (5);
      \draw[red, dashed] [-latex] (h1) edge (6);
      
    \end{tikzpicture}
    \caption{Latent structure of unlabeled latent-factor graph with one global latent factor.}
    \label{fig:exp-one-latent}
\end{figure}
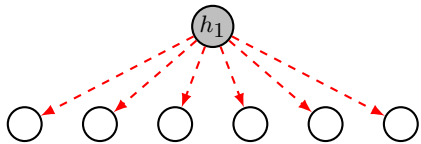

\begin{table*}[b]\centering 
\begin{tabular}{@{}c|rrrr@{}}\toprule
\shortstack{nr of obs. \\ edges $|D_V|$} & total & \shortstack{generically \\ finite-to-one} & \shortstack{rationally \\ identifiable} & \shortstack{LF-HTC- \\ identifiable} \\ 
\midrule
0 & 1 & 1 & 1 & 1  \\
1 & 1 & 1 & 1 & 1  \\
2 & 4 & 4 & 4 & 4  \\
3 & 13 & 13 & 13 & 13  \\
4 & 51 & 51 & 51 & 50  \\
5 & 163 & 160 & 159 & 134  \\
6 & 407 & 401 & 398 & 250  \\
7 & 796 & 770 & 747 & 234  \\
8 & 1169 & 1047 & 956 & 64  \\
9 & 1291 & 896 & 631 & 4  \\
\midrule
Total & 3896 & 3344 & 2961 & 755 \\
\bottomrule
\end{tabular}
\caption{Counts of unlabeled DAGs with $|V|=6$ observed nodes and one latent node as in Figure \ref{fig:exp-one-latent}.}
\label{table:exp-one-latent}
\end{table*}

In the first experimental setup we consider one global latent factor that has an effect on all observed variables, as illustrated in Figure \ref{fig:exp-one-latent}.  All possible DAGs on 6 nodes are considered for the observed part $(V, D_V)$.  Table \ref{table:exp-one-latent} lists the counts when there are $|D_V|\le 9$ edges in the observed part of the graph.  Graphs with $|D_V| > 9$ are trivially generically infinite-to-one by Corollary \ref{cor:infinite-to-one}. In the counts in Table \ref{table:exp-one-latent} we treat graphs as unlabeled, that is, we count isomorphism classes of graphs. Formally, two latent-factor graphs $G=(V \cup \cL, D)$ and $G^{\prime}=(V \cup \cL, D^{\prime})$ with the same set of nodes  are isomorphic if there is a permutation $\pi$ of the observed nodes $V$ such that for two nodes $h \in \cL$ and $ v \in V$ the edge $h \rightarrow v \in D$ if and only if $h \rightarrow \pi(v) \in D^{\prime}$ and for two nodes $v,w \in V$ the edge $v \rightarrow w \in D$ if and only if $\pi(v) \rightarrow \pi(w) \in D^{\prime}$.

In the second setup we consider two latent factors, each of them only having influence on some of the observed variables. The precise latent structure is illustrated in Figure \ref{fig:exp-two-latent}.  Since the number of isomorphism classes is much larger in this case, for computational reasons we only consider graphs with at most $|D_V|=6$ edges between observed nodes.  Up to this constraint, the observed part may be any DAG.  Table \ref{table:exp-two-latent} lists the counts for these graphs, again up to isomorphism. In this setup it is possible that the latent projection is rationally identifiable. Thus, we compare the LF-HTC with the original HTC applied to the projection and the results are counted in an additional column.

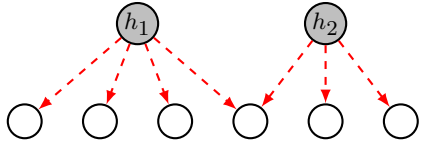
\begin{figure}[t]
    \centering
    \tikzset{
      every node/.style={circle, inner sep=0.3mm, minimum size=0.45cm, draw, thick, black, fill=white, text=black},
      every path/.style={thick}
    }
    \begin{tikzpicture}[align=center]
      \node[fill=lightgray] (h1) at (-1,1.3) {$h_{1}$};
      \node[fill=lightgray] (h2) at (1.5,1.3) {$h_{2}$};
      \node[] (1) at (-2.5,0) {};
      \node[] (2) at (-1.5,0) {};
      \node[] (3) at (-0.5,0) {};
      \node[] (4) at (0.5,0) {};
      \node[] (5) at (1.5,0) {};
      \node[] (6) at (2.5,0) {};
      
      \draw[red, dashed] [-latex] (h1) edge (1);
      \draw[red, dashed] [-latex] (h1) edge (2);
      \draw[red, dashed] [-latex] (h1) edge (3);
      \draw[red, dashed] [-latex] (h1) edge (4);
      \draw[red, dashed] [-latex] (h2) edge (4);
      \draw[red, dashed] [-latex] (h2) edge (5);
      \draw[red, dashed] [-latex] (h2) edge (6);
      
    \end{tikzpicture}
    \caption{Latent structure of unlabeled latent-factor graphs with two latent factors.}
    \label{fig:exp-two-latent}
\end{figure}

\begin{table*}[t]\centering 
\begin{tabular}{@{}c|rrrrr@{}}\toprule
\shortstack{nr of obs. \\ edges $|D_V|$} & total & \shortstack{generically \\ finite-to-one} & \shortstack{rationally \\ identifiable} & \shortstack{LF-HTC- \\ identifiable} & \shortstack{HTC- \\ identifiable}\\ 
\midrule
0 & 1 & 1 & 1 & 1 & 1\\
1 & 8 & 6 & 6 & 6 & 4 \\
2 & 63 & 45 & 45 & 43 & 24  \\
3 & 391 & 255 & 255 & 236 & 104 \\
4 & 1983 & 1171 & 1171 & 1018 & 384  \\
5 & 7570 & 3907 & 3898 & 3028 & 900  \\
6 & 21029 & 9080 & 8960 & 5861 & 1157 \\
\midrule
Total & 31045 & 14465 & 14336 & 10193 & 2574 \\
\bottomrule
\end{tabular}
\caption{Counts of unlabeled DAGs with $|V|=6$ observed nodes and two latent nodes as in Figure \ref{fig:exp-two-latent}.}
\label{table:exp-two-latent}
\end{table*}

In the considered setups, we see that the latent factor-criterion is very successful in certifying the graphs to be rationally identifiable as long as the number of observed edges $|D_V|$ is not too large. It misses more graphs the larger the number of observed edges is. Moreover, in the second setup, the latent-factor half-trek criterion  declares about four times more graphs to be rationally identifiable than the original half-trek criterion applied to the latent projection.

\section{Proof of main result}
\label{sec:proof-lfhtc}
In this section we prove the main theorem.

\begin{proof}[Proof of Theorem \ref{thm:lfhtc}]
  Let $\pa_V(v) = \{p_1,\dots,p_n\}$, $H\subseteq \cL$ with $|H|=r$, $Y = \{y_1,\dots,y_{n+r}\}$, and $Z = \{z_1,\dots,z_r\}$ be as in the statement of the theorem. Define matrices $A\in\bR^{(n+r)\times n},B\in\bR^{(n+r)\times r}$ and a vector $c\in\bR^{n+r}$ as follows:
\[A_{ij} = \begin{cases}
\left[(I_d - \L)^\top\Sigma\right]_{y_ip_j},&\text{ if }y_i\in\htr_{H}(Z\cup \{v\}),\\
\Sigma_{y_ip_j},&\text{ if }y_i\not\in\htr_{H}(Z\cup\{v\}),
\end{cases}\]
and
\[B_{ij} = \begin{cases}
\left[(I_d - \L)^\top\Sigma(I_d - \L)\right]_{y_iz_j},&\text{ if }y_i\in\htr_{H}(Z\cup \{v\}),\\
\left[\Sigma(I_d - \L)\right]_{y_iz_j},&\text{ if }y_i\not\in\htr_{H}(Z\cup\{v\}),
\end{cases}\]
and
\[c_i = \begin{cases}
\left[(I_d - \L)^\top\Sigma\right]_{y_iv},&\text{ if }y_i\in\htr_{H}(Z\cup\{v\}),\\
\Sigma_{y_iv},&\text{ if }y_i\not\in\htr_{H}(Z\cup\{v\}).
\end{cases}\]

\textbf{Claim $1$.} The matrices $A$ and $B$ and the vector $c$ are
all rationally identifiable.  \\[0.1cm]
By assumption, all columns of $\L$ indexed by a vertex in $Z\cup (Y\cap \htr_{H}(Z\cup\{v\}))$ are rationally identifiable (i.e., rational functions of $\Sigma$).  Inspecting the above expressions, we observe that only entries from these columns of $\L$ appear in the definition of $A$, $B$, and $c$.  Hence, $A$, $B$, and $c$ are rationally identifiable, as claimed.

\smallskip

Next, note that there is a set $Y_Z \subseteq Y$ such that there is a system of latent-factor half-treks with no sided intersection from $Y_Z$ to $Z$. In this system each half-trek takes the form $y \leftarrow h \rightarrow z$ for $y \in Y$, $z \in Z$ and $h \in H$. Since the system has no sided intersection, it follows from Proposition 3.4 in \citet{sullivant2010trek} that $\det(\Omega_{Y_Z, Z}) \neq 0$ generically. Thus the matrix $\Omega_{Y, Z}$ has full column rank $r$ because $\Omega_{Y_Z, Z}$ is a submatrix. Using this fact we prove our next claim. 
\smallskip

\textbf{Claim $2$.} There exists some $\psi\in\bR^r$ such that
\begin{align*}
  \begin{pmatrix} A & B \end{pmatrix}\cdot \begin{pmatrix} \L_{\pa_V(v),v} \\ \psi\end{pmatrix} = c\,.
\end{align*}

\noindent
To see this, we will implicitly construct $\psi$.  Let $\O_h = \Gamma_h^{\top} \Gamma_h$ for each $h \in \cL$, and observe that
\begin{align*}
  \O_{Y,Z\cup\{v\}} = (\Od)_{Y,Z\cup\{v\}} + \sum_{h\in H}(\O_{h})_{Y,Z\cup\{v\}} + \sum_{h\in\cL\backslash H}(\O_{h})_{Y,Z\cup\{v\}}\;.
\end{align*}
Since $Y\cap (Z\cup\{v\})=\emptyset$ by definition of the
latent-factor half-trek criterion, we have that
$ (\Od)_{Y,Z\cup\{v\}} =0$.  The definition of the
latent-factor half-trek criterion yields furthermore that for any $h\in\cL\setminus H$, either $Y\cap \ch(h)=\emptyset$ or
$(Z\cup\{v\})\cap \ch(h) =\emptyset$.  Hence,
$(\O_{h})_{Y,Z\cup\{v\}}=0$.  We obtain that
\begin{align*}
  \O_{Y,Z\cup\{v\}} =  \sum_{h\in H}(\O_{h})_{Y,Z\cup\{v\}} = (\O_{H})_{Y,Z\cup\{v\}}\;,
\end{align*}
where $\O_{H} := \sum_{h\in H}\O_h$. Note that
$\rank(\O_H)\leq |H| = r$.  Moreover, $(\O_H)_{V,Z}$ has full column
rank $r$ by assumption (since $\O_{Y,Z}$ is a submatrix of this
matrix), which proves that
\begin{equation}\label{eq:def-psi}
    (\O_H)_{V,Z} \cdot \psi = (\O_H)_{V,v}
\end{equation}
for some $\psi\in\bR^r$.

Next, consider any index $i$ such that $y_i\in\htr_{H}(Z\cup\{v\})$. Then
\begin{align}
\nonumber
&\left[\begin{pmatrix} A & B \end{pmatrix}\cdot \begin{pmatrix} \L_{\pa_V(v),v} \\ \psi\end{pmatrix} \right]_i\\
\nonumber
&=\left[(I_d - \L)^\top\Sigma\right]_{y_i,\pa_V(v)}\cdot \L_{\pa_V(v),v} + \left[(I_d - \L)^\top\Sigma(I_d - \L)\right]_{y_i,Z}\cdot \psi\\
\label{eq:id-equation}
&=\left[(I_d - \L)^\top\Sigma\cdot \L\right]_{y_iv} + \left[\O_{Y,Z}\cdot \psi\right]_i
\end{align}
because $\L_{wv}=0$ unless $w\in\pa_V(v)$ and $ (I_d - \L)^\top\Sigma(I_d - \L)=\O $.  Since $\O_{Y,Z\cup\{v\}} = (\O_{H})_{Y,Z\cup \{v\}}$, it follows from \eqref{eq:def-psi} that
\[
\left[\O_{Y,Z}\cdot \psi\right]_i = \left[\O_{Y,v}\right]_i = \O_{y_iv}.
\]
Hence, we may rewrite \eqref{eq:id-equation} as
\begin{align*}
\left[\begin{pmatrix} A & B \end{pmatrix}\cdot \begin{pmatrix} \L_{\pa_V(v),v} \\ \psi\end{pmatrix} \right]_i
&=\left[(I_d - \L)^\top\Sigma\right]_{y_iv} - \left[(I_d - \L)^\top\Sigma (I_d - \L)\right]_{y_iv} + \O_{y_iv}\\
&=\left[(I_d - \L)^\top\Sigma\right]_{y_iv} - \O_{y_iv} + \O_{y_iv}\\
&=c_i, 
\end{align*}
by the definition of $c$.

To conclude the proof of Claim 2, consider any index $i$ such that $y_i\not\in\htr_{H}(Z\cup\{v\})$.  For any such $i$, any latent-factor half-trek from a node $w\in Z\cup\{v\}$ to $y_i$ must be of the form
\[w \leftarrow h \rightarrow x_1 \rightarrow \dots \rightarrow x_m \rightarrow y_i\]
for some $h\in H$. This implies that
\begin{equation}\label{eqn:not_htr}\left[\O (I_d - \L)^{-1}\right]_{wy_i} = \left[\O_{H} (I_d - \L)^{-1}\right]_{wy_i}\end{equation}
for all $w\in Z\cup\{v\}$.  Consequently,
\begin{align}
\nonumber
\left[\begin{pmatrix} A & B \end{pmatrix}\cdot \begin{pmatrix} \L_{\pa_V(v),v} \\ \psi\end{pmatrix} \right]_i
&=\Sigma_{y_i,\pa(v)}\cdot \L_{\pa_V(v),v} + \left[\Sigma(I_d - \L)\right]_{y_i,Z}\cdot \psi\\
\nonumber
&=\left[\Sigma \L\right]_{y_i v} + \left[\Sigma(I_d - \L)\right]_{y_i,Z}\cdot \psi\\
\nonumber
&=\Sigma_{y_i v} - \left[\Sigma (I_d - \L)\right]_{y_i v} + \left[\Sigma(I_d - \L)\right]_{y_i,Z}\cdot \psi\\
\label{eq:claim2_3}
&=\Sigma_{y_i v} - \left[(I_d - \L)^{-\top}\O\right]_{y_i v} + \left[(I_d - \L)^{-\top}\O\right]_{y_i,Z}\cdot \psi,
\end{align}
because $\O = (I_d - \L)^\top\Sigma(I_d - \L)$.  Applying first~\eqref{eqn:not_htr} and then~\eqref{eq:def-psi}, we find that
\begin{align*}
- \left[(I_d - \L)^{-\top}\O\right]_{y_i v} &+ \left[(I_d - \L)^{-\top}\O\right]_{y_i,Z}\cdot \psi \\
&=- \left[(I_d - \L)^{-\top}\O_{H}\right]_{y_i v} + \left[(I_d - \L)^{-\top}\O_{H}\right]_{y_i,Z}\cdot \psi\\
&=- \left[(I_d - \L)^{-\top}\O_{H}\right]_{y_i v} + \left[(I_d - \L)^{-\top}\O_{H}\right]_{y_iv} \qquad=\; 0.
\end{align*}
Taking up \eqref{eq:claim2_3} and recalling the definition of $c$, we conclude that
\begin{align*}
\left[\begin{pmatrix} A & B \end{pmatrix}\cdot \begin{pmatrix} \L_{\pa_V(v),v} \\ \psi\end{pmatrix} \right]_i
&=\Sigma_{y_i v} = c_i.
\end{align*}

The theorem is now proven if the equation system exhibited in Claim 2 has a unique solution generically.  This is addressed by our last claim:
\smallskip

\textbf{Claim $3$.} The matrix
$\begin{pmatrix} A & B \end{pmatrix}$
is generically invertible.
\smallskip

\noindent
To prove Claim $3$, we will show that if we set some parameters equal to zero, then the considered matrix is invertible for generic choices of the remaining free parameters, which is sufficient to show that the matrix will be generically invertible with respect to choices of all parameters.

By assumption, the latent-factor graph $\Gl$ contains a system of latent-factor half-treks from $Y$ to $Z\cup\pa_V(v)$, where half-treks terminating at any $z\in Z$ are of the form $y_i \leftarrow h \rightarrow z$ for some $h\in H$. 
 For every $z\in Z$, set $\L_{\pa_V(z),z}=0$. Furthermore, every node $h\in H$ appears in at most one of the latent-factor half-treks in the system. Suppose it appears as
$y_i \leftarrow h \rightarrow w$. Then we will define $\Omega_h$ to have value $\omega_{y_iw}$ at entries $\{y_i,w\}\times \{y_i,w\}$, and zeros elsewhere. 

Consider now a mixed graph $\widehat{G}$ constructed as follows. Starting with the induced subgraph $\widehat{G}=(V,D_V)$, first remove all edges with head in $Z$. Next, looking at the selected system of latent-factor half-treks from $Y$ to $Z\cup \pa_V(v)$ in the latent-factor graph $\Gl$, any time we see a half-trek beginning with $y_i \leftarrow h\rightarrow w$, add a bidirected edge $y_i\leftrightarrow w$ to $\widehat{G}$.

By definition of the new graph $\widehat{G}$, the selected system of latent-factor half-treks from $Y$ to $Z\cup \pa_V(v)$ in $\Gl$ has a corresponding system of half-treks in $\widehat{G}$.  Here, any latent-factor half-trek that begins with edges $y_i \leftarrow h \rightarrow w$ has these two initial two edges replaced by the bidirected edge $y_i\leftrightarrow w$.  The resulting system of half-treks in $\widehat{G}$ has no sided intersection. Let $\widehat{\L}$ and $\widehat{\O}$ be the parameter matrices for this graph. Note that $(I - \widehat{\L})_{*,Z} = I_{*,Z}$ because $\widehat{\L}_{*,Z}=0$ by construction. Therefore, we can write
\[B_{ij} = \begin{cases}
\left[(I_d - \L)^\top\Sigma\right]_{y_iz_j},&\text{ if }y_i\in\htr_{H}(Z\cup\{v\}),\\
\Sigma_{y_iz_j},&\text{ if }y_i\not\in\htr_{H}(Z\cup\{v\}).
\end{cases}\]
We now apply Lemma 2 in the original half-trek paper \citep{foygel2012halftrek} to conclude that $\begin{pmatrix} A & B \end{pmatrix}$
is generically invertible.
\end{proof}

\section{Discussion}
In this work we proposed a graphical criterion that provides an effective sufficient condition for rational identifiability in linear structural equation models where latent variables are not projected to correlation among noise terms. To the best of our knowledge, it is the most general graphical criterion to decide identifiability for graphs explicitly including latent nodes. The new criterion can be checked in time that is polynomial in the size of the graph if we search only over subsets of latent nodes of bounded size. 
The restriction of the search space is necessary since checking the criterion without any restriction is in general NP-hard.

The criterion applies to a wide range of models and allows for presence of multiple latent factors that may even have an effect on many or all of the observed variables. The corresponding directed graph is allowed to be cyclic, the only restriction that we made in this work is that all latent factors are source nodes in the graph. 

It is noteworthy that even if a model is not LF-HTC-identifiable, the latent-factor half-trek method can still prove certain columns of $\Lambda$ to be identifiable. This is the case if the recursive procedure of Algorithm \ref{alg:check-lfhtc} stops early declaring some but not all nodes to satisfy the LF-HTC.  In this case, the status of identifiability of the whole graph remains inconclusive but for the nodes $v$ that the method successfully visits, the parameters $\Lambda_{\pa_V(v), v}$ are proven to be rationally identifiable. 

Methods for identifiability of latent-factor graphs are useful also as a refinement  
of
methods that operate on mixed graphs in the latent projection framework: Imagine a model that is generically infinite-to-one in the latent projection framework.  The main reason for this is often denser  confounding, that is, there is confounding between many of the observed variables.  There is then the natural question whether the model would be (rationally) identifiable if the confounding originated from a simpler structure, i.e., is caused by only a few latent factors. Then the LF-HTC may be applicable and may prove a model rationally identifiable. On the other hand, if a model is rationally identifiable in the latent projection framework, then the identifiability may be due to the assumption that confounding is caused by multiple different latent factors. As shown in Figure \ref{fig:counterexmp}, there may be settings where rational identifiability no longer holds when the confounding is in fact caused by fewer factors. Using our method it is possible to check for such identifiability failures. 
    
We would like to emphasize that the LF-HTC is useful also if the goal is model selection.  One may then be interested in testing the goodness-of-fit of a particular model, a problem for which it 
is crucial to know the dimension of the model.  The LF-HTC asserting identifiability also means that the model has the expected dimension obtained from counting parameters.

\medskip

An interesting research program emerges from the work presented here.  Indeed, one may strive to improve and extend the efficiency of the LF-HTC along similar lines as those that have been applied in previous work that has led to improvements of the original half-trek criterion for mixed graphs. 
In particular, it would be useful to find a latent-factor modification of the criterion for edgewise identifiability that allows for identification of a subset or even single direct causal effects $\lambda_{wv}$ instead of only targeting whole columns $\Lambda_{\pa_V(v), v}$; compare to \citet{weihs2017determinantal} and references therein.  This extension is of interest when effects between particular variables are the primary targets of investigation, but it may also make the criterion more powerful as a whole. Another way to extend the scope of the LF-HTC would be to apply graph decomposition techniques as proposed by \citet{tian2005identifying}; see also \citet{foygel2012halftrek} and \citet[Section 6]{drton:2018}. 

Furthermore, it would be interesting to generalize the LF-HTC to a version in which we relax the condition that all latent factors are source nodes in the graph. For example, one may consider models where latent nodes are only required to be \emph{upstream}, i.e., there may be direct causal effects between latent variables but no effects from observed variables to latent variables. Put differently, in addition to the equation system \eqref{eq:def-model} that defines the model, the vector of latent variables  $(L_h)_{h\in\mathcal{L}}$ is required to satisfy the equation
$$
    L = B^{T}L + \delta
$$
where $B$ is an $\ell \times \ell$ matrix with zeros along the diagonal and the noise terms $\delta = (\delta_h)_{h\in\mathcal{L}}$ are independent with mean zero and variance $1$. The latent covariance matrix is now of the form
$$
    \Omega = \Od + \Gamma^{\top} (I_{\ell} - B)^{-\top} (I_{\ell} - B)^{-1} \Gamma.
$$
Thus the parametrization $\tau$ of the cone of latent covariance matrices is rational and depends on the three parameter matrices $(B, \Gamma, \Od)$. The question is how to identify effects between observed variables in this case, or, even more, what can be said in terms of identifying causal effects between latent variables. Note that such a setting cannot be handled by a mixed graph approach which marginalizes out the effects of interest. Hence our work sets the scene for future developments of identifiability between latent variables.

In Lemma \ref{lem:finite-to-one} we gave a simple necessary condition for the parametrization map to be generically finite-to-one.  In future work, we hope to obtain more powerful necessary conditions for generic identifiability in the form of efficient graphical criteria.  This will amount to studying the Jacobian matrix of the parametrization $\phi_{\Gl}$, taking into account the algebraic geometry of the cone of latent covariance matrices.


\begin{acks}[Acknowledgments]
This project has received funding from the European Research Council (ERC) under the European Union’s Horizon 2020 research and innovation programme (grant agreement No 883818). Nils Sturma acknowledges support by the Munich Data Science Institute (MDSI) at Technical University of Munich (TUM) via the Linde/MDSI PhD Fellowship program.  Rina Foygel Barber was supported by the U.S.~National Science Foundation via grants DMS-1654076 and DMS-2023109, and by the Office of Naval Research via grant N00014-20-1-2337.
\end{acks}

\begin{supplement}
\stitle{Supplement to ``Half-Trek Criterion for Identifiability of Latent Variable Models''}
\sdescription{The supplement contains additional material such as further elements of proofs, a hardness result for checking the LF-HTC without a bound on the cardinality of searched sets of latent variables, and an explanation on how to effectively deploy techniques from computational algebraic geometry.}
\end{supplement}


\bibliographystyle{imsart-nameyear} 
\bibliography{literature}       


\end{document}


\sloppy 

\begin{frontmatter}
\title{Supplement to ``Half-Trek Criterion for Identifiability of Latent Variable Models''}
\runtitle{Identifiability in Latent Variable Models}

\begin{aug}
\author[A]{\fnms{Rina Foygel} \snm{Barber}\ead[label=e1]{rina@uchicago.edu}},
\author[B]{\fnms{Mathias} \snm{Drton}\ead[label=e2,mark]{mathias.drton@tum.de}},
\author[B]{\fnms{Nils} \snm{Sturma}\ead[label=e3,mark]{nils.sturma@tum.de}}
\and
\author[C]{\fnms{Luca} \snm{Weihs}\ead[label=e4]{lucaw@allenai.org}}
\address[A]{Department of Statistics, University of Chicago, \printead{e1}}
\address[B]{Department of Mathematics and Munich Data Science Institute, Technical University of Munich, \printead{e2,e3}}
\address[C]{Allen Institute for AI, \printead{e4}}
\end{aug}

\begin{abstract}
This supplement contains additional material such as further elements of proofs (Appendix \ref{sec:proofs}), a hardness result for checking the LF-HTC without a bound on the cardinality of searched sets of latent variables (Appendix \ref{sec:NP-completeness}), and an explanation on how to effectively deploy techniques from computational algebraic geometry (Appendix \ref{sec:algebraic-techniques}).
\end{abstract}

\begin{keyword}[class=MSC]
\kwd{62H22}
\kwd{62J05}
\kwd{62R01}
\end{keyword}

\begin{keyword}
\kwd{Covariance matrix}
\kwd{factor analysis}
\kwd{graphical model}
\kwd{hidden variables}
\kwd{latent variables}
\kwd{parameter identification}
\kwd{structural equation model}
\end{keyword}

\end{frontmatter}

\begin{appendix}

\section{Proofs} \label{sec:proofs}

\begin{proof}[Proof of Lemma 2.5]
Throughout the proof we let $p = \dim(S)$. Since $f$ is rational, it is a semialgebraic mapping according to Definition 2.2.5 in \citet{bochnak1998real}. Images and preimages of semialgebraic sets under semialgebraic mappings are again semialgebraic. Hence, the image $f(S)$ is a semialgebraic set. The rest of the proof is an application of Hardt's triviality theorem \citep[Theorem 5.45]{basu2006algorithms} which states that there exists a finite partition of $f(S)$ into semialgebraic sets $f(S) = \bigcup_{i=1}^r T_i$ such that for each $i$ and for each $y \in T_i$ the product $T_i \times f^{-1}(y)$ is semialgebraically homeomorphic to $f^{-1}(T_i)$.  In particular, we have for all $y \in T_i$ the equality
\begin{equation} \label{eq:dim-fiber}
    \dim(f^{-1}(y)) = \dim(f^{-1}(T_i)) - \dim(T_i).
\end{equation}

Now suppose that $k = \dim(f(S)) < p$. Observe that $S = \bigcup_{i=1}^r f^{-1}(T_i)$ is a finite union of semi-algebraic sets.  We write $C$ for the union of all preimages $f^{-1}(T_i)$ of dimension strictly less than $p$. Then for all $x \in S \setminus C$ we have by Equation \eqref{eq:dim-fiber}
$$
    \dim(f^{-1}(f(x))) \geq \dim(S) - \dim(f(S)) = p - k > 0,
$$
which means that for all  $x \in S \setminus C$ the fiber $f^{-1}(f(x))$ is a semialgebraic subset of $S$ with positive dimension, i.e., it contains infinitely many elements (cf.~Theorem 5.19 in \citet{basu2006algorithms}). Moreover, the Zariski closure $\overline{S}$ is equal to the union of Zariski closures $\overline{S \setminus C} \cup \overline{C}$. By Proposition 2.8.5 in \citet{bochnak1998real} the dimension of $C$ is strictly less than $p$, i.e., $\overline{S} \neq \overline{C}$. Since $\overline{S}$ is irreducible, it must be the case $\overline{S} = \overline{S \setminus C}$. Thus there is no proper algebraic subset of $\overline{S}$ that contains $S \setminus C$ and we conclude that $f$ is generically infinite-to-one.

For the other direction, suppose that $k = \dim(f(S)) = p$. Let $I = \{i \in \{1, \ldots, r\}: \dim(T_i) < p\}$ and $B = \bigcup_{i \in I} T_i$. Then the Zariski closure $\overline{B}$ in $\mathbb{R}^n$ has dimension strictly smaller than $p$.
Applying Equation \eqref{eq:dim-fiber} we get for all $y \in f(S) \setminus \overline{B}$ that
$$
    \dim(f^{-1}(y)) \leq \dim(S) - p = p - p = 0.
$$
Therefore, for all $x \in S\setminus f^{-1}(\overline{B})$ the fiber $f^{-1}(f(x))$ is a zero-dimensional and thus finite semialgebraic set (compare Theorem 5.19 in \citet{basu2006algorithms} again).
To finish the proof it remains to show that the Zariski closure of $f^{-1}(\overline{B})$ is a proper subset of $\overline{S}$.  As $\overline{S}$ is assumed to be irreducible, it suffices to argue that $\overline{S}$ contains a point outside the Zariski closure of $f^{-1}(\overline{B})$.  Using that $f$ is rational, we see that the preimage $f^{-1}(\overline{B})$ is an algebraic subset of $S$. Since $\dim(\overline{B}) < p$, the set $f(S) \setminus \overline{B}$ is nonempty and therefore $S \setminus f^{-1}(\overline{B})$ is nonempty as well.  Now observe that the points in $S \setminus f^{-1}(\overline{B})$ are not contained in the Zariski closure of $f^{-1}(\overline{B})$.  We conclude that $f$ is generically finite-to-one.
\end{proof}

\begin{proof}[Proof of Theorem 5.1]
The proof is similar to the proof of Theorem $6$ in \citet{foygel2012halftrek}. If $(Y,Z,H) \in 2^{V\setminus\{v\}}\times 2^{V\setminus \{v\}}\times 2^\cL$ satisfies the LF-HTC with respect to $v$, then we have a system $\Pi$ of latent-factor half-treks from $Y$ to $\pa_V(v) \cup Z$ with no sided intersection such that for each $z\in Z$, the half-trek terminating at $z$ takes the form $y\leftarrow h \rightarrow z$ for some $y\in Y$ and some $h\in H$.
  
For each latent-factor half-trek $\pi_k \in \Pi$ of the form 
$$
  \pi_k: y_k \leftarrow h_k \rightarrow w_k \rightarrow \cdots \rightarrow k, \quad k \in \pa_V(v) \cup Z,
$$
add a flow of size $1$ along the path
$$
\widetilde{\pi}_k: s \rightarrow y_k \rightarrow h_k \rightarrow h_k^{\prime} \rightarrow w_k^{\prime} \rightarrow \cdots \rightarrow k^{\prime} \rightarrow t
$$
in the flow graph $\Gf$. Similarly, for each latent-factor half-trek $\pi_k \in \Pi$ of the form 
$$
   \pi_k: y_k \rightarrow w_k \rightarrow \cdots \rightarrow k, \quad k \in \pa_V(v) \cup Z,
$$
add a flow of size $1$ along the path
$$
   \widetilde{\pi}_k: s \rightarrow y_k \rightarrow y_k^{\prime} \rightarrow w_k^{\prime} \rightarrow \cdots \rightarrow k^{\prime} \rightarrow t
$$
in the flow graph $\Gf$. Let $\widetilde{\Pi} = \{\widetilde{\pi}_k: k \in \pa_V(v) \cup Z\}$ be the system of directed paths that we obtain in the flow graph $\Gf$. Clearly, the total flow size from $s$ to $t$ in the flow graph is $|\pa_V(v)| + |Z|$. It is left to check that no capacity constraint is exceeded. This is trivial for the infinite edge capacities as well as for the infinite capacities of the nodes $s$ and $t$. For all other nodes that appear in some of the paths of the system $\widetilde{\Pi}$, note that they appear exactly once in the system since the original system of latent-factor half-treks $\Pi$ has no sided intersection.

Now suppose $\texttt{MaxFlow}(\Gf(v, A, Z)) = |\pa_V(v)| + |Z|$. By the properties of the max-flow problem with integer-valued capacities \citep{ford1962flows}, this means that there are $|\pa_V(v)| + |Z|$ directed paths from $s$ to $t$ with flow size $1$ along each path. We denote the collection of these paths by $\widetilde{\Pi}=\{\widetilde{\pi}_k: k \in \pa_V(v) \cup Z\}$, recall that by assumption $Z \cap \pa_V(v) = \emptyset$. Since all nodes in the flow graph that are not equal to $s$ or $t$ have capacity $1$, each node different from $s$ and $t$ can appear at most once in the system of paths $\widetilde{\Pi}$. Consider a specific path $\widetilde{\pi}_k \in \widetilde{\Pi}$. By construction of the graph $\Gf$, it has one of two forms.  First, we may have
$$
  \widetilde{\pi}_k: s \rightarrow y_k \rightarrow h_k \rightarrow h_k^{\prime} \rightarrow w_k^{\prime} \rightarrow \cdots \rightarrow k^{\prime} \rightarrow t
$$
with $y_k \in A$, $k \in \pa_V(v) \cup Z$ and $h_k \in \cL$. This defines the latent-factor half-trek
$$
\pi_k: y_k \leftarrow h_k \rightarrow w_k \rightarrow \cdots \rightarrow k
$$
in $\Gl$. The other possibility is that the path has the form 
$$
   \widetilde{\pi}_k: s \rightarrow y_k \rightarrow y_k^{\prime} \rightarrow w_k^{\prime} \rightarrow \cdots \rightarrow k^{\prime} \rightarrow t
$$
with $y_k \in A$ and $k \in  \pa_V(v) \cup Z$. This defines the latent-factor half-trek
$$
  \pi_k: y_k \rightarrow w_k \rightarrow \cdots \rightarrow k
$$
in $\Gl$.  In this way, we obtain a system of latent-factor half-treks $\Pi = \{\pi_k: k \in \pa_V(v) \cup Z\}$ in $\Gl$.  Because each node other than $s$ or $t$ appears at most once in the system $\widetilde{\Pi}$ in $\Gf$, the constructed system $\Pi$  has no sided intersection. Furthermore, if $k \in Z$, we have that $w_k = k$ in the latent-factor half-trek $\pi_k$ since by construction the flow graph $\Gf(v,A,Z)$ does not contain the edge $w^{\prime} \rightarrow z^{\prime}$ if $w \in Z$. Moreover, if $k \in Z$, it must be the case that $h_k \in H$. Indeed, if we have $h_k \notin H$, then $y_k \in \textrm{ch}(\pa_{\cL}(Z \cup \{v\}) \setminus H)$ which is impossible by assumption since $y_k \in A$.  Thus, $\Pi$ is a system of latent-factor half-treks with no sided intersection from $Y = \{y_k: k \in \pa_V(v) \cup Z\}$ to $Z\cup \pa_V(v)$ in $\Gl$, such that for each $z\in Z$, the half-trek terminating at $z$ takes the form $y\leftarrow h \rightarrow z$ for some $y\in Y$ and some $h\in H$. Finally, note that for the triple $(Y,Z,H)$ conditions (i) and (ii) of the LF-HTC are trivially satisfied by construction and the fact that $Y \subseteq A$.
\end{proof}

\begin{proof}[Proof of Proposition 5.2]
  Suppose the triple  $(Y,Z,H)$ satisfies the LF-HTC for $v\in V$ in $\Gl$. Recall that there exists a system of latent-factor half-treks $\Pi$ with no sided intersection from $Y$ to $\pa_V(v) \cup Z$ such that, for each $z\in Z$, the half-trek terminating at $z$ takes the form $y\leftarrow h \rightarrow z$ for some $y\in Y$ and some $h\in H$. Since $|Z| = |H|$, it is clearly not possible that there is a node $h \in H$ such that $|\ch(h)|=1$.
  
  Now let $h \in H$ such that $|\ch(h)| \in \{2,3\}$.  Then there is a unique latent-factor half-trek in $\Pi$ that has the form $y\leftarrow h \rightarrow z$ for some $y\in Y$ and some $z \in Z$. Let $\widetilde{Y} = Y \setminus \{y\}$ and $\widetilde{Z} = Z \setminus \{z\}$. It is clear that the triple $(\widetilde{Y}, \widetilde{Z} , \widetilde{H})$ satisfies conditions (i) and (iii) of the LF-HTC  and  $\widetilde{Y} \cap (\widetilde{Z} \cup \{v\}) = \emptyset$. Thus it is left to show that $h \not\in \pa_{\cL}(\widetilde{Y}) \cap \pa_{\cL}(\widetilde{Z} \cup \{v\})$.
  
  If $|\ch(h)|=2$, there are no more children of $h$ other than $y$ and $z$. Thus, we directly see that $h \not\in \pa_{\cL}(\widetilde{Y}) \cap \pa_{\cL}(\widetilde{Z} \cup \{v\})$. If $|\ch(h)|=3$, there might be one child $w \in \ch(h) \setminus \{y,z\}$. But then due to $\widetilde{Y} \cap (\widetilde{Z} \cup \{v\}) = \emptyset$, this node $w$ cannot be in both sets $\widetilde{Y}$ and $\widetilde{Z} \cup \{v\}$ at the same time. Thus $h \not\in \pa_{\cL}(\widetilde{Y}) \cap \pa_{\cL}(\widetilde{Z} \cup \{v\})$ as well. We conclude that condition (ii) of the LF-HTC is satisfied by the triple $(\widetilde{Y}, \widetilde{Z} , \widetilde{H})$ and therefore it satisfies the LF-HTC for $v\in V$. 
\end{proof}

\begin{proof}[Proof of Theorem 5.3]
  The proof works in the same way as the proof of Theorem $7$ in \citet{foygel2012halftrek}. We start by analyzing the complexity of the algorithm. 
  
  Observe that we run the ``inner'' algorithm (line $3$ to $15$) at most $|V|^2$ times. This can be seen by counting the maximal number of repetitions in line $1$. Another repetition is only done if a node was added to $S$ in the repetition before, otherwise the algorithm terminates. Thus after $|V|$ repetitions of line $1$ either all nodes were added to $S$ or the algorithm terminated before. By investigating line $2$ we see that in every pass we also iterate over at most $|V|$ nodes which yields the maximal number of $|V|^2$ runs of the inner algorithm.
  
  In the inner algorithm itself we iterate first through all sets $H \subseteq \cL_{\geq 4}  \subseteq \cL$ with cardinality at most $k$. The number of subsets of $\cL$ with cardinality at most $k$ is 
  $$
    \sum_{i=0}^{k} \binom{|\cL_{\geq 4}|}{i} = \mathcal{O}(|\cL|^k).
  $$
  In line $5$ we then iterate over all $Z \subseteq Z_a \subseteq V$ with $|Z|=|H|$. Similarly as before, we see that in the worst case these are $\mathcal{O}(|V|^k)$ iterations. Hence, we compute at most $\mathcal{O}(|V|^2 |\cL|^k |V|^k)$ maximum flows on a graph with at most $2(|V|+|\cL|)+2$ nodes and $4|V|+|\cL|+|D|$ edges and the same number $r$ of reciprocal edge pairs as in $D_V$. By \citet[Section 26]{cormen2009introduction} each maximum flow computation has complexity at most $\mathcal{O}((|V|+|\cL|+r)^3)$. Finally, note that the sets $\htr_H(U)$ for a subset $U \subseteq V$ can be found using breadth first search which has complexity $\mathcal{O}(|V|+|\cL|+|D|)$ by \citet[Section 22.2]{cormen2009introduction}. Finding parents and children of nodes is not of higher complexity. Since $|D| \leq |V|^2$, we conclude that the total complexity is $\mathcal{O}(|V|^{2+k} |\cL|^k (|V|+|\cL|+r)^3)$. 
  
  Next we show that the algorithm indeed determines LF-HTC-identifiability. Suppose that $\Gl$ is LF-HTC-identifiable. Then by Theorem 3.7 
  there is a total ordering $\prec$ on $V$ such that $w \prec v$ whenever $w \in Z_v \cup (Y_v \cap \htr_{H_v}(Z_v \cup \{v\}))$ where $(Y_v, Z_v, H_v) \in 2^{V\setminus\{v\}}\times 2^{V\setminus \{v\}}\times 2^\cL$ is a triple satisfying the LF-HTC with respect to $v$. Hence, if $\Gl$ is LF-HTC-identifiable, we might label the elements $\{v_1, \ldots, v_d\}=V$ such that $v_1 \prec v_2 \prec \cdots \prec v_d$. 
  
  Now we claim that after at most $k+1$ passes through the for loop in line $2$, all nodes $v_i$, $i \prec k$, have already been added to the solved nodes $S$. We prove this by induction. Suppose that all nodes $v_1, \ldots, v_{k-1} \in S$ and we are now testing the $k$-th node $v_k$. Let $(Y_{v_k}, Z_{v_k}, H_{v_k})$ be the triple satisfying the LF-HTC with respect to $v_k$. At one point, we will visit the correct set $H_{v_k} \in \cL_{\geq 4}$ in line $3$ due to Proposition 5.2. 
  If  $z \in Z_{v_k}$, then $z \prec v_k$ and therefore $z \in S$ already. Additionally, $z \in \ch(H_{v_k})$ and $z \not\in \{v_k\} \cup \pa_V(v_k)$ by definition of the LF-HTC. Thus, we will visit the correct set $Z_{v_k} \subseteq Z_a$ in line $5$. Now take any $y \in Y_{v_k}$. By definition of the LF-HTC, we have that $y \not\in Z_{v_k} \cup \{v_k\} \cup \ch(\pa(Z_{z_k} \cup \{v_k\}) \setminus H_{v_k})$. Moreover, if $y \in \htr_{H_{v_k}}(Z_{v_k} \cup \{v_k\})$, then $y \prec v_k$ and thus $y \in S$, which means $y \in A$. If instead $y \not\in \htr_{H_{v_k}}(Z_{v_k} \cup \{v_k\})$, then $y \in A$ by definition of $A$. Therefore, $Y_{v_k} \subseteq A$ and by Theorem 5.1 
  we will add $v_k$ to $S$. By induction, we obtain that $S=V$ after at most $|V|$ repetitions of line $2$ to $16$.

  Conversely, suppose the algorithm finds $S=V$, and fix a node $v \in V$. It remains to show that there is a triple $(Y_v, Z_v, H_v) \in 2^{V\setminus\{v\}}\times 2^{V\setminus \{v\}}\times 2^\cL$ such that all nodes $w \in Z_v \cup (Y_v \cap \htr_{H_v}(Z_v \cup \{v\}))$ were added to $S$ in the steps before. When $v$ was added to $S$, there must have been sets  $H_v \subseteq \cL_{\geq 4}$ and $Z_v \subseteq (S \cap  \ch(H_v)) \setminus (\{v\} \cup \pa_V(v))$ with $|Z|=|H|$ such that $\texttt{MaxFlow}(\Gf(v, A, Z_v)) = |\pa_V(v)| + |Z_v|$. By Theorem 5.1, 
  this means that there is a set $Y_v \subseteq A$ such that the triple $(Y_v, Z_v, H_v)$ satisfies the LF-HTC with respect to $v$. By construction, $Z_v \subseteq S$ at this stage of the algorithm. Moreover, we have for all $w \in A$ that either $w \in S$ already or $w \not\in \htr_{H_v}(Z_v \cup \{v\})$. Thus, we have as well that $Y_v \cap\htr_{H_v}(Z_v \cup \{v\}) \subseteq S$ at this stage of the algorithm. Applying this reasoning to all $v \in V$, we see that $\Gl$ is LF-HTC-identifiable.
  
\end{proof}

\section{NP-Hardness of the LF-HTC} \label{sec:NP-completeness}

In this section we show that the task of deciding \texttt{LF-HTC}$(\Gl,v)$ for unrestricted graphs is NP-hard. That is, it is at least as hard as the hardest problems in the NP-complexity class. Formally, we have to show that every problem in NP is reducible to \texttt{LF-HTC}$(\Gl,v)$ in polynomial time. Fortunately, it is enough to show that one arbitrary problem that is known to be NP-hard is reducible to \texttt{LF-HTC}$(\Gl,v)$ in polynomial time.  For this purpose, we choose the Boolean satisfiability problem in conjunctive normal form (\texttt{CNFSAT}). This is the problem of determining whether a Boolean expression in conjunctive normal form is satisfiable. That is, suppose we have Boolean variables $\{x_1,\dots,x_n\}$, and let
  \begin{align*}
    C = C_1\wedge \dots \wedge C_M = (\ell^1_1\vee \dots \vee l^1_{m_1}) \wedge \dots \wedge (\ell^M_1 \vee \dots \vee l^M_{m_M})
  \end{align*}
  where $\ell^i_j \in\{x_1,\dots,x_n,\neg x_1,\dots,\neg x_n\}$ for all $1\leq i\leq M$ and $1\leq j\leq m_i$. We call the elements of $\{x_1,\dots,x_n,\neg x_1,\dots,\neg x_n\}$ \emph{literals} and $\neg x_i$ the \emph{negation} of $x_i$. Then \texttt{CNFSAT}($\{x_1,\dots,x_n\},C)$ is the problem of determining if there exist assignments of \texttt{True} and \texttt{False} to each $x_i$ such that, under this assignment, $C$ is $\texttt{True}$, i.e., satisfied.

\begin{thm} \label{thm:np-complete}
  There exists a polynomial time reduction from \texttt{CNFSAT} to \texttt{LF-HTC}$(\Gl,v)$ so that \texttt{LF-HTC}$(\Gl,v)$ is NP-hard.
\end{thm}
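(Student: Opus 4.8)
The plan is to establish NP-hardness by exhibiting a polynomial-time many-one reduction from \texttt{CNFSAT}. Since membership of \texttt{LF-HTC}$(\Gl,v)$ in NP has already been argued (a candidate triple can be verified in polynomial time via Theorem \ref{thm:check-solution}), it suffices to produce, from an instance $(\{x_1,\dots,x_n\}, C)$ with clauses $C_1,\dots,C_M$, a latent-factor graph $\Gl$ together with a node $v$, computable in time polynomial in $n+M$, such that $C$ is satisfiable if and only if there exists a triple $(Y,Z,H)$ solving \texttt{LF-HTC}$(\Gl,v)$. I would build $\Gl$ from two kinds of gadgets: a \emph{variable gadget} for each $x_i$, whose role is to force any LF-HTC solution to commit to a single truth value for $x_i$, and a \emph{clause gadget} for each $C_j$, attached to $v$ through an observed parent, whose role is to demand that the committed assignment makes $C_j$ true.

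The central design principle is to let the solver's free choice of the latent set $H\subseteq\cL$ encode a Boolean assignment. In the variable gadget for $x_i$ I would place two latent nodes, $h_i^{\mathsf{T}}$ and $h_i^{\mathsf{F}}$, representing the two truth values, and arrange their children together with the size and disjointness constraints of Definition \ref{def:latent_htc} so that exactly one of $h_i^{\mathsf{T}},h_i^{\mathsf{F}}$ can lie in $H$ in any valid triple. Mutual exclusivity is enforced through condition (ii), $\pa_{\cL}(Y)\cap\pa_{\cL}(Z\cup\{v\})\subseteq H$: the gadget is wired so that including both latent nodes of a variable would force an unavoidable latent parent shared by $Y$ and $Z\cup\{v\}$ that cannot be absorbed within the cardinality budget $|Z|=|H|$. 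Clause satisfaction is then encoded in condition (iii). Each observed parent $c_j$ of $v$ carries, one per literal of $C_j$, an incoming path originating from the corresponding variable gadget; a latent-factor half-trek reaching $c_j$ with no sided intersection can be completed only by routing it through a literal whose truth value was selected by the chosen $H$. Because a latent node occurs on both the left and the right side of its half-trek, it may head at most one half-trek, and this bookkeeping is exactly what, via the right-side disjointness, prevents a single variable from being used with conflicting polarities across different clauses.

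With the gadgets fixed I would prove the two implications. For the forward direction, given a satisfying assignment I construct $H$ by selecting $h_i^{\mathsf{T}}$ or $h_i^{\mathsf{F}}$ according to the truth value of $x_i$, choose $Z$ as the matched children and $Y$ as the corresponding sources, and exhibit the half-trek system explicitly, checking conditions (i)--(iii) directly. For the reverse direction, given any solving triple $(Y,Z,H)$ I read off a truth value for each $x_i$ from $H\cap\{h_i^{\mathsf{T}},h_i^{\mathsf{F}}\}$ (well defined by the exclusivity forced above), and argue that the half-trek terminating at each $c_j$ certifies a true literal of $C_j$, so the assignment satisfies $C$. The construction uses $O(n+M+\sum_i m_i)$ nodes and edges and is therefore of polynomial size.

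The main obstacle will be the gadget design itself: tuning the children sets of the latent nodes and the edges into the clause parents so that the interplay of the cardinality constraints $|Y|=|\pa_V(v)|+|H|$ and $|Z|=|H|$, the two disjointness requirements in (ii), and the no-sided-intersection requirement in (iii) together admit a valid triple \emph{precisely} for satisfying assignments, with no spurious solutions that short-circuit a variable gadget or satisfy a clause without routing through a genuinely true literal. Achieving this exactness --- in particular ruling out half-trek systems that exploit directed paths to bypass the intended latent commitments --- is the delicate part; once the gadgets are shown to be ``rigid'' in this sense, both directions of the equivalence follow by the routine verification sketched above.
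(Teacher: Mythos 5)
Your overall architecture matches the paper's reduction---truth values encoded by a choice among per-variable latent nodes, clause nodes attached as observed parents of $v$, condition (ii) forcing the relevant latents into $H$, and condition (iii) certifying a true literal per clause---but what you have written is a plan for a proof rather than a proof. The entire substance of an NP-hardness argument lies in the explicit construction and the verification that it admits no spurious solutions, and you defer exactly that: you say the gadgets must be ``tuned'' so that the cardinality and disjointness constraints admit a triple precisely for satisfying assignments, and you yourself flag this rigidity as the delicate unresolved step. Moreover, the one mechanism you do gesture at is wrong as stated: you claim that including both latents of a variable in $H$ is blocked because a shared latent parent of $Y$ and $Z\cup\{v\}$ ``cannot be absorbed within the cardinality budget $|Z|=|H|$,'' but condition (ii) is perfectly compatible with both $h_i^{\mathsf{T}}$ and $h_i^{\mathsf{F}}$ lying in $H$ so long as $Z$ grows accordingly; cardinality bookkeeping alone does not force exclusivity.

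The paper achieves rigidity with two concrete devices that your sketch would need to invent. First, each clause node $w_j$ is guarded by a private latent node $h_{w_jv}$ whose only children are $w_j$ and $v$; this forces $w_j\notin Y$ (the paper's Claim 1), so half-treks reaching clause nodes must originate at literal-copy nodes. Second---and this is the heart of exclusivity---the two per-variable latents $h_i$ and $h_{\ol{i}}$ share a common child $q_i$, and all literal copies of the same polarity are pairwise joined through auxiliary latents $h_{ab}$. If $Y$ meets the positive copies, then $h_i\in H$ by condition (ii) (as $h_i$ is also a parent of $v$); condition (iii) then demands a half-trek $y\leftarrow h_i\to z$ with $z\in Z$, the pairwise latents rule out $z$ being another positive copy, forcing $z=q_i$; and since $q_i$ can appear on the right-hand side of only one half-trek in a system with no sided intersection, the negated side is blocked (Claims 2 and 3). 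Note also that in the reverse direction the paper reads the assignment off from $Y$'s intersection with the literal copies, not from $H\cap\{h_i^{\mathsf{T}},h_i^{\mathsf{F}}\}$ as you propose---a priori both latents of a variable could lie in $H$, so your read-off is not well defined without precisely the $q_i$-collision argument you have not supplied. Until such mechanisms are specified and verified, the proposal is an outline of the right strategy, not a proof.
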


\begin{proof}
  To see that there is a polynomial time reduction from \texttt{CNFSAT} to \texttt{LF-HTC}$(\Gl,v)$, let $X=\{x_1,\dots,x_n\}$ and let $C$ be as above. We now construct a latent-factor graph $G^\cL$ and show that solving \texttt{LF-HTC}$(\Gl,v)$ in $G^\cL$ solves \texttt{CNFSAT}($X,C$).  Our construction initializes $G^\cL=(V \cup \cL,D)$ to be empty. In the following when we add nodes to $G^\cL$ they will implicitly be added to $V$ unless they are labeled $h_{*}$ for some index $*$, in which case they are to be added to $\cL$.

  Begin by adding to the graph the nodes $v,w_1,\dots,w_M,h_{w_1v},\dots,h_{w_mv}$, the edges $w_i\to v$, and the edges $w_i \leftarrow h_{w_iv} \to v$ for all $1\leq i\leq M$. The $w_i$ will correspond to the $M$ disjunctive clauses in $C$. Now for the $i$th Boolean variable $x_i$, let $A_i$ be the number of times $x_i$ (in non-negated form) appears in $C$, and let $B_i$ be the number of times $\neg x_i$ appears in $C$. Then add to the graph
  \begin{enumerate}[(i)]
  \item the nodes $u_{i1},\ \dots,\ u_{iA_i},\quad \ol{u}_{i1},\ \dots,\ \ol{u}_{iB_i},\quad  u_i,\ \ol{u}_i, \quad h_{i},h_{\ol{i}}, \quad \text{and}\quad q_i$,

  \item $u_{ij}\to w_k$ if the $j$-th appearance, from the left, of $x_i$ (in non-negated form) in $C$ occurs in the $k$-th disjunctive clause of $C$,
  \item $\ol{u}_{ij}\to w_k$ if the $j$-th appearance, from the left, of $\neg x_i$ in $C$ occurs in the $k$-th disjunctive clause of $C$,
  \item $h_{i}\to a$ for each $a\in\{u_{i1},\ \dots,\ u_{iA_i}, u_i, q_i, v\}$,
  \item $h_{\ol{i}}\to a$ for each $a\in\{\ol{u}_{i1},\ \dots,\ \ol{u}_{iB_i}, \ol{u}_i, q_i, v\}$,
  \item a node $h_{ab}=h_{ba}$ and edges $a\leftarrow h_{ab} \rightarrow b$ for each pair of variables $a,b\in \{u_i,u_{i1},\dots,u_{iA_i}\}$, and
  \item a node $h_{ab}=h_{ba}$ and edges $a\leftarrow h_{ab} \rightarrow b$ for each pair of variables $a,b\in \{\ol{u}_i,\ol{u}_{i1},\dots,\ol{u}_{iB_i}\}$.
  \end{enumerate}

      \begin{figure}[t]
    \centering
    \tikzset{
      every node/.style={circle, inner sep=0.3mm, minimum size=0.45cm, draw, thick, black, fill=white, text=black},
      every path/.style={thick}
    }
    \resizebox{\linewidth}{!}{%
    \begin{tikzpicture}[align=center,node distance=2.2cm]
      \node [fill=lightgray] (o1) [] {$h_{1}$};
      \node [] (u1) [below of= o1]    {$u_1$};
      \node [] (u11) [below left of= u1]    {$u_{11}$};
      \node [] (u12) [below right of= u1]    {$u_{12}$};
      \node [] (q1) [right of= u1]    {$q_{1}$};
      
      \node [] (nu11) [below right of= q1]    {$\ol{u}_{11}$};
      \node [] (nu1) [above right of= nu11]    {$\ol{u}_1$};
      \node [fill=lightgray] (on1) [above of= nu1]    {$h_{\ol{1}}$};
      \node [] (nu12) [below right of= nu1]    {$\ol{u}_{12}$};
      
      \draw[red, dashed] [-latex] (o1) edge (u1);
      \draw[red, dashed] [-latex] (o1) edge (u11);
      \draw[red, dashed] [-latex] (o1) edge (u12);
      \draw[red, dashed] [-latex] (o1) edge (q1);
      \draw[red, dashed] [latex-latex] (u1) edge (u11);
      \draw[red, dashed] [latex-latex] (u1) edge (u12);
      \draw[red, dashed] [latex-latex] (u11) edge (u12);

      \draw[red, dashed] [-latex] (on1) edge (q1);
      \draw[red, dashed] [-latex] (on1) edge (nu11);
      \draw[red, dashed] [-latex] (on1) edge (nu12);
      \draw[red, dashed] [-latex] (on1) edge (nu1);
      \draw[red, dashed] [latex-latex] (nu1) edge (nu12);
      \draw[red, dashed] [latex-latex] (nu11) edge (nu1);
      \draw[red, dashed] [latex-latex] (nu11) edge (nu12);

      \node [] (w1) [below of= u12]    {$w_1$};
      \node [] (w2) [right of= w1]    {$w_2$};
      \node [] (w3) [right of= w2]    {$w_3$};
      \node [] (v) [below of= w2]    {$v$};

      \draw[blue] [-latex] (w1) edge (v);
      \draw[blue] [-latex] (w2) edge (v);
      \draw[blue] [-latex] (w3) edge (v);
      \draw[red, dashed] [latex-latex, bend left] (w1) edge (v);
      \draw[red, dashed] [latex-latex, bend left] (w2) edge (v);
      \draw[red, dashed] [latex-latex, bend left] (w3) edge (v);

      \draw[blue] [-latex] (u11) edge (w1);
      \draw[blue] [-latex] (u12) edge (w2);
      \draw[blue] [-latex] (nu11) edge (w1);
      \draw[blue] [-latex] (nu12) edge (w3);

      \node [] (u21) [right of= nu12]    {$u_{21}$};
      \node [] (u2) [above right of= u21]    {$u_2$};
      \node [fill=lightgray] (o2) [above of= u2]    {$h_{2}$};
      \node [] (u22) [below right of= u2]    {$u_{22}$};
      \node [] (q2) [right of= o2]    {$q_2$};
       
      \node [fill=lightgray] (no2) [right of= q2]    {$h_{\ol{2}}$};
      \node [] (nu2) [below of= no2]    {$\ol{u}_2$};

      \draw[red, dashed] [-latex] (o2) edge (q2);
      \draw[red, dashed] [-latex] (o2) edge (u2);
      \draw[red, dashed] [-latex] (o2) edge (u21);
      \draw[red, dashed] [-latex] (o2) edge (u22);
      \draw[red, dashed] [latex-latex] (u2) edge (u21);
      \draw[red, dashed] [latex-latex] (u2) edge (u22);
      \draw[red, dashed] [latex-latex] (u21) edge (u22);

      \draw[red, dashed] [-latex] (no2) edge (nu2);
      \draw[red, dashed] [-latex] (no2) edge (q2);
      \draw[red, dashed] [-latex] (no2) to [out=290,in=90] ($(nu2) + (0.5, 0)$) to [out=270,in=-10] (v);

      \draw[blue] [-latex] (u21) edge (w2);
      \draw[blue] [-latex] (u22) edge (w3);

      \draw[red, dashed] [-latex] (o1) to [out=180,in=90] ($(u11)-(0.8,0)$) to [out=270, in=160] (v);
      \draw[red, dashed] [-latex] (on1) to [out=150,in=0] ($(o1)-(0,-1)$) to [out=180, in=90] ($(u11)-(1,0)$) to [out=270,in=180] (v);
      \draw[red, dashed] [-latex] (o2) to [out=-20,in=90] ($(u22)+(1,0)$) to [out=270, in=0] (v);
      
    \end{tikzpicture}
    } 

    \caption{The graph $G^\cL$ corresponding to the \texttt{CNFSAT} problem with Boolean expression $C=(x_1\vee \neg x_1)\wedge (x_1\vee x_2) \wedge (\neg x_1\vee x_2)$. To not clutter the graph, a red bidirected edge (dashed) corresponds to a latent factor that has only arrows pointing to the two endpoints of the edge, e.g. the red bidirected edge $w_1\leftrightarrow v$ corresponds to $w_1\leftarrow h_{w_1v} \to v$.} \label{fig:cfnsat-to-graph}
  \end{figure}
  
  An example of a graph $G^\cL$ corresponding to a Boolean expression can be found in Figure \ref{fig:cfnsat-to-graph}. Now that we have constructed $G^\cL$ we claim that every triple $(Y,Z,H)$ satisfying the LF-HTC for $v \in V$ in $\Gl$  corresponds to an assignment to $X$ such that $C$ is satisfied under this assignment and vice versa.\\

  We will now start with the more complicated direction.  Suppose that there is a triple $(Y,Z,H)$ satisfying the LF-HTC for $v$ in $G^\cL$. That is, there exists a latent-factor half-trek system $\Pi$ from $Y$ to $Z\cup \pa_V(v)$ satisfying the appropriate LF-HTC conditions.  \\

  \textbf{Claim $1$:} No $w_i$ is an element of $Y$. \\
  
  Suppose for contradiction that some $w_i\in Y$. Since there exists a node $h_{w_iv}$ whose only edges are $w_i\leftarrow h_{w_iv} \rightarrow v$,  condition (ii) of the LF-HTC implies that $h_{w_iv}$ must be in $H$. But then condition (iii) implies that there must be some $z\in \text{ch}(h_{w_iv}) \cap Z$ for which the latent-factor half-trek $y\leftarrow h_{w_iv} \to z$ is in $\Pi$. By $\text{ch}(h_{w_iv}) = \{w_i,v\}$ we have a contradiction since if $z=v$ we would have $v\in Z$, and if $z=w_i$ we have that $w_i\in Y\cap Z$ so that $Y\cap Z\not=\emptyset$. Hence there is no $w_i \in Y$. \\

  \textbf{Claim $2$:} If $Y\cap\{u_i,\ u_{i1},\dots,u_{iA_i}\}\not=\emptyset$, then $Y\cap\{\ol{u}_i,\ \ol{u}_{i1},\dots,\ol{u}_{iB_i}\}=\emptyset$. \\
  
  Suppose that $u\in Y\cap\{u_i,\ u_{i1},\dots,u_{iA_i}\}$. Since $\text{ch}(h_i) = \{v, u_i, q_i, u_{i1},\dots,u_{iA_i}\}$, it follows from condition (ii) of the LF-HTC that $h_i\in H$. Hence, by condition (iii), it must be the case that there is some $y,z\in\{q_i,u_i,u_{i1},\dots,u_{iA_i}\}$ with $y\not=z$ such $y\in Y$ and $z\in Z$ and the latent-factor half-trek $y\leftarrow h_i\to z \in \Pi$. There are two cases.

  \underline{Case 1:} $z\not=q_i$. We must have that $z\in \{u_i,u_{i1},\dots,u_{iA_i}\}\setminus\{u\}$. Recall, in this case, that there exists $h_{uz}$ whose only children are $u$ and $z$. By a similar argument as in claim $1$, it follows that $u \leftarrow h_{uz}\to z$ must also be in $\Pi$ which contradicts the fact that $\Pi$ must have no sided intersection (since $z$ is already in the right-hand side of the latent-factor half-trek $u\leftarrow h_{i}\to z$).

  \underline{Case 2:} $z = q_i$. In this case we must have that the latent-factor half-trek $y \leftarrow h_i \to q_i\in \Pi$ for some $y\in\{u_i,u_{i1},\dots,u_{iA_i}\}$. Now, for essentially identical reasons as above, if we have $\ol{u}\in Y\cap \{\ol{u}_i,\ \ol{u}_{i1},\dots,\ol{u}_{iB_i}\}$,  there is some $\ol{y}\in \{\ol{u}_i,\ol{u}_{i1},\dots,\ol{u}_{iB_i}\}\setminus\{\ol{u}\}$ such that $\ol{y} \leftarrow h_{\ol{y}q_i} \to q_i$ is in $\Pi$. This contradicts the fact that $\Pi$ has no sided intersection (since $q_i$ is in the right part of two latent-factor half-treks) and thus we must have that $Y\cap \{\ol{u}_i,\ol{u}_{i1},\dots,\ol{u}_{iB_i}\}=\emptyset$.

  Since the first case results in a contradiction we must be in the second case with $Y\cap \{\ol{u}_i,\ol{u}_{i1},\dots,\ol{u}_{iB_i}\}=\emptyset$. \\

  \textbf{Claim $3$:} If $Y\cap\{\ol{u}_i,\ \ol{u}_{i1},\dots,\ol{u}_{iB_i}\}\not=\emptyset$, then $Y\cap\{u_i,\ u_{i1},\dots,u_{iA_i}\}=\emptyset$. \\

  The claim follows by symmetry from claim $2$. \\

  We can now show that our given triple $(Y,Z,H)$ satisfying the LF-HTC for $v \in V$ in $\Gl$  corresponds to an assignment to $X$ such that $C$ is satisfied under this assignment. For each $1\leq i \leq M$, assign $x_i$ to be \texttt{True} if $Y\cap\{u_i,\ u_{i1},\dots,u_{iA_i}\}\not=\emptyset$ and \texttt{False} otherwise. To see that this satisfies $C$ consider the $i$-th disjunctive clause $C_i=(\ell^i_1 \vee \dots \vee l^i_{m_i})$ of $C$. Since $(Y,Z,H)$ satisfies the LF-HTC for $v$, there must exist some $y \in Y$ such that there is a latent-factor half-trek from $y$ to $w_i$ in $\Pi$. By claim $1$ and the proof of claim $2$, we must have that $y\in \{u_j, u_{j1},\dots,u_{jA_j}\}$ or $y\in \{\ol{u}_j,\ol{u}_{j1},\dots,\ol{u}_{jB_j}\}$ for some $j$. Suppose that $y\in \{u_j, u_{j1},\dots,u_{jA_j}\}$. Then, since there must be a half-trek from $y$ to $w_i$, we have, by the construction of $G^\cL$ that $x_j$ appears (in non-negated form) in $C_i$. Since $\{u_j, u_{j1},\dots,u_{jA_j}\}\cap Y\not=\emptyset$, we must have set $x_j$ to be $\texttt{True}$ and thus $C_i$ is satisfied. If instead  $y\in \{\ol{u}_j, \ol{u}_{j1},\dots,\ol{u}_{jB_j}\}$, it follows, by the same logic, that $\neg x_j$ must appear in $C_i$ and that we set $x_j$ to be \texttt{False}, so again $C_j$ is satisfied. As $j$ was arbitrary,  $C$ is satisfied by the assignment. \\

  Now we wish to show the opposite direction, namely, that if there is  an assignment to $X$ such that $C$ is satisfied under this assignment, then there must also be a set $(Y,Z,H)$ satisfying the LF-HTC for $v$. Suppose we have assigned \texttt{True} and \texttt{False} to the $x_i$, so that $C$ is satisfied. Let $1\leq i\leq M$. For the $i$th disjunctive clause $C_i$ in $C$, let $l_k$ be the first literal in the clause, which evaluates to \texttt{True} (there must be at least one such literal since $C=\texttt{True}$ implies $C_i=\texttt{True}$). Now $l_k$ must equal $x_j$ or $\neg x_j$ for some $j$. Suppose that $l_k=x_j$. Then there exists a unique $1\leq \ell\leq A_j$ such that edge $u_{j\ell}\to w_i$ is in the graph. If $l_k=\neg x_j$ then, similarly, there exists a unique $1\leq \ell \leq B_j$ such that there exists an edge $\ol{u}_{j\ell}\to w_i$ in the graph. In either case, denote $u_{j\ell}$ or $\ol{u}_{j\ell}$ as $y_i$.

  Now for $M+1\leq i\leq M+n$, let $y_i = u_i$ if $x_i$ is \texttt{True}, and $y_u=\ol{u}_i$ otherwise. Let $Z = \{q_1,\dots,q_n\}$, $L = \{\tilde{h}_1,\dots,\tilde{h}_n\}$ where $\tilde{h}_i=h_i$ if $x_i=\texttt{True}$ and $\tilde{h}_i=h_{\ol{i}}$ if otherwise, and $Y = \{y_1,\dots,y_{M+n}\}$. By our construction, it holds that

  \begin{enumerate}[(i)]
  \item $|Y|=|\pa_V(v)| + |H|$,
  \item $Y\cap (Z\cup \{v\})=\emptyset$ and if $y\in Y$ and $v$ (or $z\in Z$) are children of the same latent factor, then that latent factor is some $h_i$ for which $x_i=\texttt{True}$ or some $h_{\ol{i}}$ for which $x_i=\texttt{False}$, and
  \item the set of latent-factor half-treks $\Pi$ with elements
    \begin{align*}
      &y_i \to w_i \quad\text{ for $1\leq i\leq M$}, \\
      &y_{M+i} \leftarrow h_{i} \to z_i=q_i \quad \text{ for $1\leq i\leq n$ if $x_i$ is \texttt{True}, and}\\
      &y_{M+i} \leftarrow h_{\ol{i}}\to z_i=q_i \quad \text{ for $1\leq i\leq n$ if $x_i$ is \texttt{False}}
    \end{align*}
    forms a latent-factor half-trek system from $Y$ to $\pa_V(v)\cup Z$ for which the half-trek to each $z_i$ is of the form $y_{M+i}\leftarrow \tilde{h}_{i} \to z_i$ and if $y_i$ has a common latent parent with $v$ (or $z\in Z$) then the latent parent must correspond to some $\tilde{h}_j\in H$ and we have that $y_{M+j}\leftarrow h_{j}\to q_j \in \Pi$.
  \end{enumerate}

  Note that the above three conditions immediately imply that $(Y,Z,H)$ satisfies the LF-HTC for $v$. We have thus shown that \texttt{CNFSAT} reduces to \texttt{LF-HTC}$(\Gl,v)$ in polynomial time.
\end{proof}

\section{Algebraic Techniques for Determining Identifiability} \label{sec:algebraic-techniques}
As discussed in \citet{garcia2010identifying}, rational identifiability may be decided by techniques from computational algebraic geometry.  For the original half-trek criterion, \citet{foygel2012halftrek} provide an effective algorithm to perform the necessary computations.  In this section we show how their approach may be generalized to cover the latent-factor setup from this paper.

Consider a slightly more general setting than before, i.e., let $S \subseteq \mathbb{R}^m$ be an open semialgebraic set, and let
\begin{align*}
    \tau : S &\longrightarrow \textit{PD}(d) \\
    \Delta &\longmapsto \tau(\Delta)
\end{align*}
be a polynomial map that parametrizes the cone of latent covariance matrices $\im(\tau)$. Together with a directed graph on the observed nodes $G^V=(V, D_V)$ with $V=|d|$, the cone of latent covariance matrices $\im(\tau)$ postulates a covariance model. 

\begin{defn} \label{def:model-tau}
    The covariance model given by a tuple $(G^V, \im(\tau))$, consisting of a directed graph $G^V=(V, D_V)$ with $V=|d|$ and a cone of latent covariance matrices $\im(\tau)$, is given by the family of all covariance matrices
    $$
        \Sigma = (I_d - \Lambda)^{-\top}\Omega(I_d-\Lambda)^{-1}
    $$
    for $\Lambda \in \mathbb{R}^{D_V}_{\mathrm{reg}}$ and $\Omega \in \im(\tau)$.
\end{defn}

A covariance matrix $\Sigma \in \textit{PD}(d)$ is in the covariance model given by a tuple $(G^V, \im(\tau))$ if and only if $\Sigma = \phi(\Lambda, \tau(\Delta))$ for $\Lambda \in \mathbb{R}^{D_V}_{\mathrm{reg}}$ and $\Delta \in S$ where the parametrization map $\phi$ is given by
\begin{align*}
\phi : \mathbb{R}^{D_V}_{\mathrm{reg}} \times \textit{PD}(d) &\longrightarrow \textit{PD}(d) \\
    (\Lambda, \Omega) &\longmapsto (I_d - \Lambda)^{-\top}\Omega(I_d-\Lambda)^{-1}.
\end{align*}

If we let $S=\mathbb{R}^{D_{\cL V}} \times \mathrm{diag}^{+}_d$ and $\tau: (\Gamma, \Od) \mapsto \Od + \Gamma^{\top} \Gamma$, then Definition \ref{def:model-tau} coincides with  Definition 2.1
of a covariance model given by a latent-factor graph.

In what follows, let $\lambda = \{\lambda_{ij}: i \rightarrow j \in D_V\}$ be variables representing the non-zero entries of $\Lambda \in \mathbb{R}^{D_V}_{\mathrm{reg}}$, and let $\omega = \{\omega_{ij}: 1 \leq i \leq j \leq d\}$ be variables representing the entries of $\Omega \in \textit{PD}(d)$. Let $d(\lambda) \in \mathbb{R}[\lambda]$ be the polynomial defined by $\det(I_d - \Lambda)$ for $\Lambda \in \mathbb{R}^{D_V}$. Now observe that, for $(\Lambda, \Omega) \in \mathbb{R}^{D_V}_{\mathrm{reg}} \times \textit{PD}(d)$, we may write the $ij$-th entry of the matrix $\phi(\Lambda, \Omega)$ as a rational function
$$
    \phi_{ij}(\Lambda, \Omega) = \frac{\widetilde{\phi}_{ij}(\lambda, \omega)}{d(\lambda)^{2}}
$$
with $\widetilde{\phi}_{ij}(\lambda, \omega) \in \mathbb{R}[\lambda, \omega]$ due to Cramer's rule. Furthermore, we write $\sigma = \{\sigma_{ij}: 1 \leq i \leq j \leq d\}$ and $\delta = \{\delta_i: i=1,\ldots, m\}$ for variables representing the entries of $\Sigma \in \textit{PD}(d)$ and $\Delta \in S$, respectively. Consider the polynomial ring $\mathbb{R}[\lambda, \sigma, \delta, \xi]$ with one additional variable $\xi$.  Then the vanishing ideal of the graph of the parametrization in Definition \ref{def:model-tau} is given by 
$$
    \mathcal{J} = \langle \{\sigma_{ij} d(\lambda)^2 - \widetilde{\phi}_{ij}(\lambda, \tau(\delta)), 1 \leq i \leq j \leq d\} \cup \{ \xi d(\lambda) - 1 \}\rangle \subseteq \mathbb{R}[\lambda, \sigma, \delta, \xi].
$$
The additional variable $\xi$ and the polynomial $\xi d(\lambda) - 1$ are needed to ensure that $d(\lambda)$ never vanishes. Eliminating the variables $\lambda$, $\delta$ and $\xi$, we get the vanishing ideal of the image of $\Theta = \mathbb{R}^{D_V}_{\mathrm{reg}} \times \im(\tau)$ under the parametrization $\phi$, in formulas,
$$
    I(\phi(\Theta)) = \mathcal{J} \cap \mathbb{R}[\sigma].
$$
Nevertheless, for the purpose of identifying direct causal effects, we are interested in an ideal where $\lambda$ is not eliminated, i.e., we are interested in
$$
    \mathcal{I} =  \mathcal{J} \cap \mathbb{R}[\lambda, \sigma].
$$
By definition, this ideal consists exactly of those polynomials $f(\lambda, \sigma) \in \mathbb{R}[\lambda, \sigma]$ such that $f(\Lambda, \phi(\Lambda, \Omega)) = 0$ for all $(\Lambda, \Omega) \in \Theta$.

\begin{prop} \label{prop:check-identifiability}
The parameter $\lambda_{ij}$ is rationally identifiable if and only if $\mathcal{I}$ contains an element of the form $a(\sigma) \lambda_{ij} - b(\sigma)$ with $a,b \in \mathbb{R}[\sigma]$ and $a\not\in I(\phi(\Theta))$.
\end{prop}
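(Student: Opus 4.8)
The plan is to prove both implications directly, treating the witness polynomial $a(\sigma)\lambda_{ij}-b(\sigma)$ and the identifying rational map as numerator/denominator counterparts of one another. Two facts from the preceding setup do the bookkeeping: by construction $\mathcal{I}$ is exactly the vanishing ideal of the graph $\Gamma=\{(\Lambda,\phi(\Lambda,\Omega)):(\Lambda,\Omega)\in\Theta\}$, so $f\in\mathcal{I}$ means $f(\Lambda,\phi(\Lambda,\Omega))=0$ for all $(\Lambda,\Omega)\in\Theta$; and $a\in I(\phi(\Theta))$ means precisely that $a(\phi(\Lambda,\Omega))=0$ for all $(\Lambda,\Omega)\in\Theta$. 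I will also use repeatedly that $\overline{\Theta}$ is irreducible, so the complement of any proper algebraic subset is Zariski dense, and that $\phi$ is regular on $\Theta$ (its entries are rational with denominator a power of $\det(I_d-\Lambda)$, which is nonvanishing on $\Theta$).

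For the backward direction, suppose $a(\sigma)\lambda_{ij}-b(\sigma)\in\mathcal{I}$ with $a\notin I(\phi(\Theta))$. Membership in $\mathcal{I}$ yields $a(\phi(\Lambda,\Omega))\,\Lambda_{ij}=b(\phi(\Lambda,\Omega))$ for every $(\Lambda,\Omega)\in\Theta$. I would set $\psi:=b/a$, regarded as a rational map $\textit{PD}(d)\to\mathbb{R}$, and take $A\subset\overline{\Theta}$ to be the zero locus (intersected with $\overline{\Theta}$) of the single polynomial $\tilde a\cdot\det(I_d-\Lambda)$, where $\tilde a$ is the numerator obtained after clearing the denominator in $a\circ\phi$. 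Because $a\notin I(\phi(\Theta))$, the function $a\circ\phi$ is not identically zero on $\Theta$, hence $\tilde a\not\equiv0$ on $\overline{\Theta}$; since also $\det(I_d-\Lambda)\not\equiv0$ and $\overline{\Theta}$ is irreducible, $A$ is a proper algebraic subset. On $\Theta\setminus A$ we have $a(\phi(\Lambda,\Omega))\neq0$, so dividing the displayed identity gives $\psi\circ\phi(\Lambda,\Omega)=\lambda_{ij}$, which is exactly rational identifiability in the sense of Definition \ref{def:rat-ident}(b).

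For the forward direction, assume $\lambda_{ij}$ is rationally identifiable and write the identifying map as $\psi=p/q$ with $p,q\in\mathbb{R}[\sigma]$ and $q\neq0$; let $A\subset\overline{\Theta}$ be the associated proper algebraic subset, enlarged to contain the loci where $q\circ\phi$ or $\det(I_d-\Lambda)$ vanishes. On the (dense) set $\Theta\setminus A$ the composition $\psi\circ\phi$ is defined and equals $\lambda_{ij}$, so $q(\phi(\Lambda,\Omega))\,\Lambda_{ij}-p(\phi(\Lambda,\Omega))=0$ there. The left-hand side is a regular function on $\Theta$ vanishing on a Zariski-dense subset, hence vanishes on all of $\Theta$; equivalently $q(\sigma)\lambda_{ij}-p(\sigma)$ vanishes on the graph $\Gamma$ and therefore lies in $\mathcal{I}=I(\Gamma)$. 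Taking $a=q$ and $b=p$ gives the required element, and $q\notin I(\phi(\Theta))$ because at any point of the nonempty set $\Theta\setminus A$ one has $q(\phi(\Lambda,\Omega))\neq0$, exhibiting a point of $\phi(\Theta)$ where $q$ does not vanish.

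The main obstacle, and the point requiring care in both directions, is the coupling between the two genericity notions: one must ensure that the denominator $a$ (respectively $q$) lies \emph{outside} $I(\phi(\Theta))$, which is what makes the rational map genuinely defined on a dense subset and, conversely, is forced by that definedness. This is the precise content of the clause $a\notin I(\phi(\Theta))$, and it is why that clause cannot be dropped. The surrounding steps---passing from ``equality off a proper algebraic subset'' to ``exact vanishing on the graph,'' and back---are routine once one invokes irreducibility of $\overline{\Theta}$ together with the regularity of $\phi$ on $\Theta$; the only mild technical nuisance is clearing the denominator $\det(I_d-\Lambda)$ so that the relevant sets are honestly algebraic rather than merely locally closed.
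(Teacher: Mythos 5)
Your proof is correct and takes essentially the same route as the paper's own argument: in both directions you pass between the identifying rational map $\psi = b/a$ and the witness polynomial $a(\sigma)\lambda_{ij}-b(\sigma)$, using $a\notin I(\phi(\Theta))$ to guarantee a proper algebraic exceptional set. You are in fact slightly more careful than the paper at two points it glosses over---clearing the denominator $\det(I_d-\Lambda)$ to make the bad locus honestly algebraic, and invoking irreducibility of $\overline{\Theta}$ to upgrade vanishing on $\Theta\setminus A$ to vanishing on all of $\Theta$ (hence membership in $\mathcal{I}$)---but the substance is identical.
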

\begin{proof}
The proof is similar to the proofs of Lemma 7 in \citet{foygel2012halftrek-supp} and Proposition 3 in \citet{garcia2010identifying}. For completeness, we give the full proof in our notation. Suppose that the parameter $\lambda_{ij}$ is rationally identifiable. By definition, there is a rational function $b(\sigma)/a(\sigma) \in \mathbb{R}(\sigma)$ such that 
$$
    \frac{b(\phi(\Lambda, \Omega))}{a(\phi(\Lambda, \Omega))} = \lambda_{ij}.
$$
for all $(\Lambda, \Omega) \in \Theta \setminus A$, where $A$ is a proper algebraic subset of the Zariski closure of $\Theta$. In particular, outside $A$ we must have that $a(\phi(\Lambda, \Omega)) \neq 0$ and therefore it must be the case that $a \not\in I(\phi(\Theta))$. On the other hand, it is clear that the polynomial $a(\sigma) \lambda_{ij} - b(\sigma)$ is a member of $\mathcal{I}$ since this polynomial vanishes if we substitute $\sigma$ by $\phi(\Lambda, \Omega)$ for any $(\Lambda, \Omega) \in \Theta \setminus A$. 

Conversely, suppose that $a$ and $b$ satisfy the given conditions. Since $a \not\in  I(\phi(\Theta))$, we have $a(\phi(\Lambda, \Omega)) \neq 0$ for all $(\Lambda, \Omega) \in \Theta \setminus A$, where $A$ is a proper algebraic subset of the Zariski closure of $\Theta$. But then $b/a$ is a rational function identifying $\lambda_{ij}$ from $\sigma$ outside the proper algebraic subset $A$.
\end{proof}

For checking rational identifiability one has to check the membership of polynomials of the form $a(\sigma) \lambda_{ij} - b(\sigma)$ in the ideal $\mathcal{I}$. This can be achieved by computation of a Gr\"obner basis of $\mathcal{I}$ with eliminating term order using Buchberger's algorithm; see \citet{garcia2010identifying}. The Gr\"obner basis computation can be very challenging in terms of running times, even for small graphs. One reason is that the input polynomials to the computation $\widetilde{\phi}_{ij}(\lambda, \tau(\delta)), 1 \leq i \leq j \leq n$, may already have large degree. As mentioned in \citet{foygel2012halftrek-supp}, it is easy to construct graphs where the bit-size of those polynomials is already exponential in the size of the graphs.
By Definition \ref{def:model-tau}, we have the equation $\Sigma = (I_d - \Lambda)^{-\top}\tau(\Delta)(I_d-\Lambda)^{-1}$. Since the matrix $(I_d - \Lambda)$ is required to be invertible, this is equivalent to the equation
\begin{equation} \label{eq:psi}
    (I_d - \Lambda)^{\top}\Sigma(I_d-\Lambda) = \tau(\Delta).
\end{equation}
Clearly, the entries of the matrix on the left-hand side are cubic, i.e., the maximal degree of the involved polynomials in $\sigma$ and $\lambda$ is $3$. We suggest to exploit this fact instead of computing the Gr\"obner basis for $\mathcal{I}$ directly. The idea is to compute a generating set of the vanishing ideal of the cone of latent covariance matrices $\im(\tau)$ and then to plug-in the polynomials from the left-hand side. The resulting polynomials then indeed define the same ideal $\mathcal{I}$ and may be much smaller in size. Therefore, the Gr\"obner basis computation may be faster. This is proved in Proposition \ref{prop:idealI} below, but we need to introduce some more notation beforehand. 

As usual, we denote $I(\im(\tau)) = \{f \in \mathbb{R}[\omega] : f(\Omega) = 0 \textrm{ for all } \Omega \in \im(\tau)\}$ for the vanishing ideal of $\im(\tau)$. We will also need the map corresponding to Equation \eqref{eq:psi}, i.e., 
\begin{align*}
\psi : \mathbb{R}^{D_V}_{\mathrm{reg}} \times \textit{PD}(d) &\longrightarrow \textit{PD}(d) \\
    (\Lambda, \Sigma) &\longmapsto (I_d - \Lambda)^{\top}\Sigma(I_d-\Lambda).
\end{align*}
Note that $\psi$ may be interpreted as an ``inverse'' of $\phi$ in the sense that $\psi(\Lambda, \phi(\Lambda, \Omega)) = \Omega$ and $\phi(\Lambda, \psi(\Lambda, \Sigma)) = \Sigma$. Since $\psi$ and $\tau$ are polynomial functions by definition, we write under abuse of notation $\psi(\lambda, \sigma)$ and $\tau(\delta)$ for the collection of polynomials they define. Similarly, we write $\widetilde{\phi}(\lambda, \omega)$ for the collection of polynomial functions $\widetilde{\phi}_{ij}(\lambda, \omega)$ for $1 \leq i \leq j \leq d$. 

\begin{prop} \label{prop:idealI}
Let $A_S = \{h \circ \psi \in \mathbb{R}[\lambda, \sigma]: h \in I(\im(\tau))\}$. Then 
$$
\mathcal{I} = \langle A_S, \xi d(\lambda) - 1 \rangle \cap \mathbb{R}[\lambda, \sigma].
$$
\end{prop}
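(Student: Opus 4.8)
The plan is to prove the two inclusions separately, the organizing tool being that $\phi$ and $\psi$ are mutually inverse once $d(\lambda)=\det(I_d-\Lambda)$ is inverted. First I would pass to the quotient $B := \mathbb{R}[\lambda,\sigma,\delta,\xi]/\langle\xi d(\lambda)-1\rangle \cong \mathbb{R}[\lambda,\sigma,\delta]_{d(\lambda)}$. Since $d(\lambda)$ is a unit in $B$, the images of the generators $\sigma_{ij}d(\lambda)^2-\widetilde{\phi}_{ij}(\lambda,\tau(\delta))$ generate the same ideal $J_B$ as the polynomials $\sigma_{ij}-\phi_{ij}(\lambda,\tau(\delta))$. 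Because $\langle\xi d(\lambda)-1\rangle\subseteq\mathcal{J}$, membership can be tested after reduction to $B$: for $f\in\mathbb{R}[\lambda,\sigma]$ one has $f\in\mathcal{J}\iff f\in J_B$, and $f\in\langle A_S,\xi d(\lambda)-1\rangle\iff f$ lies in the ideal $A_SB$ generated by $A_S$ in $B$. Using the identities $\psi(\Lambda,\phi(\Lambda,\Omega))=\Omega$ and $\phi(\Lambda,\psi(\Lambda,\Sigma))=\Sigma$ recorded before the proposition, I would set up the $\mathbb{R}[\lambda,\delta]_{d(\lambda)}$-algebra isomorphism $\Gamma:B\to B':=\mathbb{R}[\lambda,\omega,\delta]_{d(\lambda)}$ given by $\sigma_{ij}\mapsto\phi_{ij}(\lambda,\omega)$, with inverse $\omega_{ij}\mapsto\psi_{ij}(\lambda,\sigma)$.

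For the inclusion $\langle A_S,\xi d(\lambda)-1\rangle\cap\mathbb{R}[\lambda,\sigma]\subseteq\mathcal{I}$ it suffices to show $A_S\subseteq\mathcal{J}$, since $\xi d(\lambda)-1\in\mathcal{J}$ and intersecting $\langle A_S,\xi d(\lambda)-1\rangle\subseteq\mathcal{J}$ with $\mathbb{R}[\lambda,\sigma]$ then yields $\mathcal{I}$. For this I would establish the congruence $\psi_{ij}(\lambda,\sigma)\equiv\tau_{ij}(\delta)\pmod{\mathcal{J}}$. Working modulo $\mathcal{J}$, the matrix $d(\lambda)^2\Sigma$ is replaced by $\mathrm{adj}(I-\Lambda)^\top\tau(\delta)\,\mathrm{adj}(I-\Lambda)$, so that $d(\lambda)^2\psi(\lambda,\sigma)=(I-\Lambda)^\top(d(\lambda)^2\Sigma)(I-\Lambda)$ collapses, via $\mathrm{adj}(I-\Lambda)(I-\Lambda)=d(\lambda)I$, to $d(\lambda)^2\tau(\delta)$; cancelling the unit $d(\lambda)^2$ (using $\xi^2 d(\lambda)^2\equiv 1$) gives the congruence. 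As the quotient map is a ring homomorphism, any $h\in I(\im(\tau))$ satisfies $h\circ\psi\equiv h(\tau(\delta))\pmod{\mathcal{J}}$, and $h(\tau(\delta))=0$ as a polynomial in $\delta$ because $h$ vanishes on $\im(\tau)=\tau(S)$ while $S$ is Zariski dense. Hence $h\circ\psi\in\mathcal{J}$, i.e. $A_S\subseteq\mathcal{J}$.

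For the reverse inclusion $\mathcal{I}\subseteq\langle A_S,\xi d(\lambda)-1\rangle$, take $f\in\mathcal{I}$, so $f\in\mathbb{R}[\lambda,\sigma]$ and $f\in J_B$, and apply $\Gamma$. On generators $\Gamma(h\circ\psi)=h(\psi(\lambda,\phi(\lambda,\omega)))=h(\omega)$ by $\psi\circ\phi=\mathrm{id}$, so $\Gamma(A_SB)=I(\im(\tau))\,B'$; thus $f\in A_SB\iff F:=\Gamma(f)=f(\lambda,\phi(\lambda,\omega))\in I(\im(\tau))\,B'$. Moreover $\Gamma(J_B)=\langle\phi_{ij}(\lambda,\omega)-\phi_{ij}(\lambda,\tau(\delta))\rangle_{B'}$, and applying the substitution $\omega_{ij}\mapsto\tau_{ij}(\delta)$ annihilates every generator, so $F(\lambda,\tau(\delta))=0$. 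It therefore suffices to show that $F\in\mathbb{R}[\lambda,\omega]_{d(\lambda)}$ with $F(\lambda,\tau(\delta))=0$ forces $F\in I(\im(\tau))\cdot\mathbb{R}[\lambda,\omega]_{d(\lambda)}$, which lies inside $I(\im(\tau))\,B'$.

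This last step is the crux and the main obstacle: it is the passage from vanishing on the parametrized image to membership in the extended vanishing ideal, and it is not formal. Clearing the denominator $d(\lambda)^k$, it reduces to showing that a polynomial $\widetilde{F}\in\mathbb{R}[\lambda,\omega]$ with $\widetilde{F}(\lambda,\tau(\delta))=0$ in $\mathbb{R}[\lambda,\delta]$ lies in $I(\im(\tau))\cdot\mathbb{R}[\lambda,\omega]$. I would prove this by writing the substitution homomorphism $\theta:\mathbb{R}[\lambda,\omega]\to\mathbb{R}[\lambda,\delta]$, $\omega_{ij}\mapsto\tau_{ij}(\delta)$, as $\mathrm{id}_{\mathbb{R}[\lambda]}\otimes_{\mathbb{R}}\theta_0$ with $\theta_0:\mathbb{R}[\omega]\to\mathbb{R}[\delta]$; flatness of the free module $\mathbb{R}[\lambda]$ gives $\ker\theta=\ker\theta_0\cdot\mathbb{R}[\lambda,\omega]$, and $\ker\theta_0=I(\im(\tau))$ precisely because the nonempty open set $S$ is Zariski dense, so a polynomial vanishing on $\tau(S)$ already vanishes identically after composition with $\tau$. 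Then $\widetilde{F}\in\ker\theta=I(\im(\tau))\cdot\mathbb{R}[\lambda,\omega]$, hence $f\in A_SB$, hence $f\in\langle A_S,\xi d(\lambda)-1\rangle$; since $f\in\mathbb{R}[\lambda,\sigma]$ this completes the inclusion. It is exactly this reliance on flatness together with the openness of $S$ that makes the conclusion an equality of ideals rather than merely of their radicals.
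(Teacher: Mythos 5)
Your proof is correct, and while it pivots on the same mechanism as the paper's own argument---after inverting $d(\lambda)$, the substitutions $\sigma \mapsto \phi(\lambda,\omega)$ and $\omega \mapsto \psi(\lambda,\sigma)$ are mutually inverse---your execution is genuinely different in organization and, in one respect, more complete. The paper works by direct manipulation of generators: for the easy inclusion it evaluates $h\circ\psi$ at points $(\Lambda, \phi(\Lambda,\Omega))$ of the graph, invoking the pointwise characterization of $\mathcal{I}$; for the hard inclusion it forms $g(\lambda,\omega,\xi)=f(\lambda,\xi^2\widetilde{\phi}(\lambda,\omega))$, asserts in a single sentence that vanishing of $g$ at all points $(\Lambda,\Omega,d(\Lambda)^{-1})$ with $(\Lambda,\Omega)\in\Theta$ places it in $\langle h_1(\omega),\dots,h_r(\omega),\xi d(\lambda)-1\rangle$, writes out the resulting combination, and substitutes $\omega\mapsto\psi(\lambda,\sigma)$. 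You instead pass to the localization $B\cong\mathbb{R}[\lambda,\sigma,\delta]_{d(\lambda)}$ and transport ideals through the explicit isomorphism $\Gamma$, which packages the paper's two substitution steps as $\Gamma$ and $\Gamma^{-1}$. This buys two things. First, your easy inclusion is a formal congruence $\psi(\lambda,\sigma)\equiv\tau(\delta)\pmod{\mathcal{J}}$ via the adjugate identity, so it needs no appeal to the pointwise description of $\mathcal{I}$. Second, and more substantively, you isolate and actually prove the step the paper glosses over: the passage from vanishing (here obtained purely algebraically as $F(\lambda,\tau(\delta))=0$ by killing the generators of $\Gamma(J_B)$) to membership in the extension of $I(\im(\tau))$. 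Your lemma $\ker(\mathrm{id}\otimes\theta_0)=\ker\theta_0\cdot\mathbb{R}[\lambda,\omega]$ by flatness (equivalently, extracting $\lambda$-coefficients), combined with $\ker\theta_0=I(\im(\tau))$ from Zariski density of the open set $S$, is exactly the justification that the paper's ``therefore $g$ lies in the ideal'' tacitly requires; this step is not automatic for real points and a possibly non-radical ideal, and it is precisely the density of $S$ and of $\{d(\lambda)\neq 0\}$ that makes it work. The remaining bookkeeping in your argument---well-definedness of $\Gamma$ with the stated inverse, the generators of $\Gamma(J_B)$, denominator clearing along the embedding $\mathbb{R}[\lambda,\delta]\hookrightarrow\mathbb{R}[\lambda,\delta]_{d(\lambda)}$, and reduction of membership questions modulo $\xi d(\lambda)-1$---all checks out.
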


\begin{proof} 
We begin by showing $A_S \subseteq \mathcal{I}$. Thus let $f(\lambda, \sigma) \in A_S$. By definition of $A_S$, there is $h \in I(\im(\tau))$ such that $f=h \circ \psi$. Hence, for any point $(\Lambda, \Omega) \in \Theta$, we have
$$
     f(\Lambda, \phi(\Lambda, \Omega)) = h(\psi(\Lambda,\phi(\Lambda, \Omega))) = h(\Omega) = 0
$$
since $\Omega \in \im(\tau)$. This means that $f \in \mathcal{I}$ and therefore $A_S \subseteq \mathcal{I}$. But this yields that $\langle A_S,  \xi d(\lambda) - 1 \rangle \subseteq \mathcal{J}$, and by the definition of $\mathcal{I}$ we conclude that $\langle A_S,  \xi d(\lambda) - 1 \rangle \cap \mathbb{R}[\lambda, \sigma] \subseteq \mathcal{I}$.

For the other inclusion, let $I(\im(\tau))=\langle h_1, \ldots, h_r\rangle \subseteq \mathbb{R}[\omega]$ and $f(\lambda, \sigma) \in \mathcal{I}$. Define the polynomial $g(\lambda, \omega, \xi) = f(\lambda,  \xi^2 \widetilde{\phi}(\lambda, \omega))$, which is an element of the polynomial ring $\mathbb{R}[\lambda, \omega, \xi]$. By the definition of $\mathcal{I}$, the polynomial $g$ becomes zero if we plug in $(\lambda, \omega, \xi) = (\Lambda, \Omega, d(\Lambda)^{-1})$ for any $(\Lambda, \Omega) \in \Theta$. Therefore, $g$ lies in the ideal $\langle h_1(\omega), \ldots, h_r(\omega), \xi d(\lambda) - 1 \rangle$ interpreted in the ring $\mathbb{R}[\lambda, \omega, \xi]$. Hence, we can write

$$
    g(\lambda, \omega, \xi) = \sum_{i=1}^r g_i(\lambda, \omega, \xi) h_i(\omega) + g_{r+1}(\lambda, \omega, \xi) (\xi d(\lambda) - 1)
$$
for suitable coefficient polynomials $g_i(\lambda, \omega, \xi) \in \mathbb{R}[\lambda, \omega, \xi]$. Plugging in $\psi(\lambda, \sigma)$ for $\omega$, we see that 
\begin{align*} \label{eq:function-g}
    g(\lambda, \psi(\lambda, \sigma), \xi) = &\sum_{i=1}^r g_i(\lambda, \psi(\lambda, \sigma), \xi ) h_i(\psi(\lambda, \sigma)) \\
    &+ g_{r+1}(\lambda, \psi(\lambda, \sigma), \xi) (\xi d(\lambda) - 1)
\end{align*}
is an element of $\langle A_S, \xi d(\lambda) - 1 \rangle$ since each polynomial $h_i(\psi(\lambda, \sigma)) \in A_S.$ Moreover, we have the equality
$$
g(\lambda, \psi(\lambda, \sigma), \xi) = f(\lambda, \xi^2 \widetilde{\phi}(\lambda,\psi(\lambda, \sigma))) = f(\lambda, \xi^2 d(\lambda)^2 \sigma)
$$
and thus $f(\lambda, \xi^2 d(\lambda)^2 \sigma) \in \langle A_S, \xi d(\lambda) - 1 \rangle$. But by the fact that $\xi d(\lambda) - 1 \in \langle A_S, \xi d(\lambda) - 1 \rangle$, it holds that the difference 
$$
   f(\lambda, \xi^2 d(\lambda)^2 \sigma) -  f(\lambda, \sigma) \in \langle A_S, \xi d(\lambda) - 1 \rangle
$$
and therefore it must be the case that the polynomial $f(\lambda, \sigma)$ itself is in the ideal $\langle A_S, \xi d(\lambda) - 1 \rangle$ since every ideal is an additive group. We conclude the proof by noting that $f(\lambda, \sigma)$ does not depend on $\xi$ which means that $f(\lambda, \sigma) \in \mathbb{R}[\lambda, \sigma]$ as well.
\end{proof}

Propositions \ref{prop:check-identifiability} and \ref{prop:idealI} yield Algorithm \ref{alg:check-algebraically} for checking rational identifiability of a covariance model given by $(G^V, \im(\tau))$. The proof of the correctness of the algorithm is identical to the proof of Algorithm $1$ in \citet{foygel2012halftrek-supp} and therefore omitted.

\begin{algorithm}[b]
\caption{Algebraically checking rational identifiability}
\begin{algorithmic}[1]
\STATE{Compute a Gr\"obner basis $\langle h_1, \ldots, h_r \rangle \subseteq \mathbb{R}[\omega]$  of the vanishing ideal $I(\im(\tau))$ using elimination theory.}
\STATE{Choose a block-monomial order $\geq$ on the monomials in the variables $\lambda, \sigma$ with $\lambda > \sigma$}.
\STATE{Let $\mathcal{I} = \langle h_1(\psi(\lambda, \sigma)), \ldots, h_r(\psi(\lambda, \sigma)), \xi d(\lambda) - 1 \rangle \cap \mathbb{R}[\lambda, \sigma]$ and compute the reduced Gr\"obner basis $T$ with respect to $\geq$ of the ideal $\mathcal{I}$.}
\STATE{The covariance model given by $(G^V, \im(\tau))$ is rationally identifiable if and only if for each $i \rightarrow j \in D_V$ the basis $T$ contains an element whose leading monomial equals a monomial in $\sigma$ times $\lambda_{ij}$.}
\end{algorithmic}
\label{alg:check-algebraically}
\end{algorithm}

With the reduced Gr\"obner basis obtained in step 3 of Algorithm~\ref{alg:check-algebraically}, it is not just possible to determine rational identifiability, but it is straightforward to modify the algorithm to check if a graph is generically finite-to-one \citep{garcia2010identifying}. 

Note that the computation of the ideal $\mathcal{I}$ requires the polynomials  $\psi(\lambda, \sigma)$. To speed-up the algorithm for large-scale computational experiments as in Section
6, 
it is advantageous to replace the variables $\sigma$ by numerical values obtained from the entries of a randomly chosen matrix $\Sigma$ in the model. Put differently, we randomly generate parameters $\Lambda_0 \in \mathbb{R}^{D_V}_{\mathrm{reg}}$ and $\Delta_0 \in S$ and then use the polynomials $\psi(\lambda, \phi(\Lambda_0, \tau(\Delta_0)))$ instead of $\psi(\lambda, \sigma)$. The Gr\"obner basis then readily yields the dimension and cardinality of the solution set. In practice, we generate $(\Lambda_0, \Delta_0)$ from prime numbers and we repeat the randomized calculation several times for each graph to avoid false conclusions from random draws yielding parameters $(\Lambda_0, \tau(\Delta_0))$ in special constellations.

\begin{exmp}
    Consider the latent-factor graph in  Figure \ref{fig:example-algebraic-techniques}. In this case, the parameter space $S$ is given by $S=\mathbb{R}^{D_{\cL V}} \times \mathrm{diag}^{+}_d$ and we have $\tau: (\Gamma, \Od) \mapsto \Od + \Gamma^{\top} \Gamma$. By implicitization, we find that the Gr\"obner basis of $I(\im(\tau))$ is given by the following list of polynomials:
    $$
        \omega_{12}\omega_{34} - \omega_{14}\omega_{23}, \ \omega_{13}\omega_{24} - \omega_{14}\omega_{23}, \ \omega_{15}, \ \omega_{16}, \ \omega_{25}, \ \omega_{26}, \ \omega_{35}, \ \omega_{36}.
    $$
    Now, we plug in the relevant polynomials $\psi(\lambda, \sigma)$, which for this graph are given by
    \begin{align*}
        \psi_{12}(\lambda, \sigma) &= -\lambda_{12}\sigma_{11} + \sigma_{12}, \\
        \psi_{13}(\lambda, \sigma) &= -\lambda_{23}\sigma_{12} + \sigma_{13}, \\
        \psi_{14}(\lambda, \sigma) &= \sigma_{14}, \\
        \psi_{15}(\lambda, \sigma) &= -\lambda_{45}\sigma_{14} + \sigma_{15}, \\
        \psi_{16}(\lambda, \sigma) &= -\lambda_{46}\sigma_{14} + \sigma_{16}, \\
        \psi_{23}(\lambda, \sigma) &= \lambda_{12}\lambda_{23}\sigma_{12} - \lambda_{12} \sigma_{13} - \lambda_{23} \sigma_{22} + \sigma_{23}, \\
        \psi_{24}(\lambda, \sigma) &= -\lambda_{12}\sigma_{14} + \sigma_{24}, \\
        \psi_{25}(\lambda, \sigma) &= \lambda_{12}\lambda_{45}\sigma_{14} - \lambda_{12} \sigma_{15} - \lambda_{45} \sigma_{24} + \sigma_{25}, \\
        \psi_{26}(\lambda, \sigma) &= \lambda_{12}\lambda_{46}\sigma_{14} - \lambda_{12} \sigma_{16} - \lambda_{46} \sigma_{24} + \sigma_{26}, \\
        \psi_{34}(\lambda, \sigma) &= -\lambda_{23}\sigma_{24} + \sigma_{34}, \\
        \psi_{35}(\lambda, \sigma) &= \lambda_{23}\lambda_{45}\sigma_{24} - \lambda_{23} \sigma_{25} - \lambda_{45} \sigma_{34} + \sigma_{35}, \\
        \psi_{36}(\lambda, \sigma) &= \lambda_{23}\lambda_{46}\sigma_{24} - \lambda_{23} \sigma_{26} - \lambda_{46} \sigma_{34} + \sigma_{36}.
    \end{align*}
    As in step 3 of Algorithm \ref{alg:check-algebraically}, we compute the reduced Gr\"obner basis $T$ of the ideal $\mathcal{I}$. Since $T$ contains the four polynomials
    \begin{align*}
        &\lambda_{12} \sigma_{11} \sigma_{12} \sigma_{24} \sigma_{34} - \lambda_{12} \sigma_{11} \sigma_{13} \sigma_{24}^2 - \lambda_{12} \sigma_{11} \sigma_{14} \sigma_{22} \sigma_{34} \\
        & \qquad + \lambda_{12} \sigma_{11} \sigma_{14} \sigma_{23} \sigma_{24} - \lambda_{12} \sigma_{12} \sigma_{14}^2 \sigma_{23} + \lambda_{12} \sigma_{13} \sigma_{14}^2 \sigma_{22} \\ & \qquad - \sigma_{12}^2 \sigma_{24} \sigma_{34} + \sigma_{12} \sigma_{13} \sigma_{24}^2 + \sigma_{12} \sigma_{14} \sigma_{22} \sigma_{34} - \sigma_{13} \sigma_{14} \sigma_{22} \sigma_{24}, \\
        &\lambda_{23} \sigma_{12} \sigma_{24} - \lambda_{23} \sigma_{14} \sigma_{22} - \sigma_{13} \sigma_{24} + \sigma_{14} \sigma_{23},\\
        &\lambda_{45} \sigma_{14} - \sigma_{15} \textrm{ and } \\
        &\lambda_{46} \sigma_{14} - \sigma_{16},
    \end{align*}
    we conclude that the latent-factor graph in Figure \ref{fig:example-algebraic-techniques} is rationally identifiable. It is in fact even LF-HTC-identifiable.
\end{exmp}

\begin{figure}[t]
    \centering
    \tikzset{
      every node/.style={circle, inner sep=0.3mm, minimum size=0.45cm, draw, thick, black, fill=white, text=black},
      every path/.style={thick}
    }
    \begin{tikzpicture}[align=center]
      \node[fill=lightgray] (h1) at (-1,1) {$h_{1}$};
      \node[fill=lightgray] (h2) at (1.5,1) {$h_{2}$};
      \node[] (1) at (-2.5,0) {1};
      \node[] (2) at (-1.5,0) {2};
      \node[] (3) at (-0.5,0) {3};
      \node[] (4) at (0.5,0) {4};
      \node[] (5) at (1.5,0) {5};
      \node[] (6) at (2.5,0) {6};
      
      \draw[red, dashed] [-latex] (h1) edge (1);
      \draw[red, dashed] [-latex] (h1) edge (2);
      \draw[red, dashed] [-latex] (h1) edge (3);
      \draw[red, dashed] [-latex] (h1) edge (4);
      \draw[red, dashed] [-latex] (h2) edge (4);
      \draw[red, dashed] [-latex] (h2) edge (5);
      \draw[red, dashed] [-latex] (h2) edge (6);
      
      \draw[blue] [-latex] (1) edge (2);
      \draw[blue] [-latex] (2) edge (3);
      \draw[blue] [-latex] (4) edge (5);
      \draw[blue] [-latex, bend right] (4) edge (6);
      
    \end{tikzpicture}
    \caption{LF-HTC-identifiable and therefore rationally identifiable.}
    \label{fig:example-algebraic-techniques}
\end{figure}
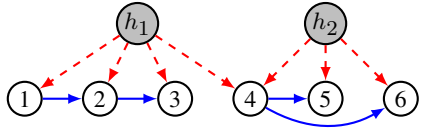
\end{appendix}

\bibliographystyle{imsart-nameyear} 
\bibliography{literature}       